\newcommand\mathcircled[1]{%
	\mathpalette\@mathcircled{#1}%
}
\newcommand\@mathcircled[2]{%
	\tikz[baseline=(math.base)] \node[draw,circle,inner sep=3pt] (math) {$\m@th#1#2$};%
}
\numberwithin{equation}{subsection}
\theoremstyle{plain}
\newtheorem{thm}{Theorem}[section]
\newtheorem{lem}[thm]{Lemma}
\newtheorem{prop}[thm]{Proposition}
\newtheorem{cor}[thm]{Corollary}
\newtheorem*{thm*}{Theorem}
\theoremstyle{definition}
\newtheorem{dfn}[thm]{Definition}
\newtheorem{exm}[thm]{Example}
\theoremstyle{remark}
\newtheorem{rem}[thm]{Remark}
\def\IA{\mathbb{A}}
\def\IC{\mathbb{C}}
\def\ID{\mathbb{D}}
\def\IN{\mathbb{N}}
\def\IV{\mathbb{V}}
\def\IZ{\mathbb{Z}}
\def\IG{\mathbb{G}}
\def\IO{\mathbb{O}}
\def\AA{{\mathcal A}} 
\def\CC{{\mathcal C}}
\def\DD{{\mathcal D}}
\def\HH{{\mathcal H}}
\def\JJ{{\mathcal J}}
\def\OO{{\mathcal O}}
\def\WW{{\mathcal W}}
\def\GL{\mathrm{GL}}
\def\PGL{\mathrm{PGL}}
\def\ad{\mathrm{ad}}
\newcommand{\ket}[1]{|{#1}\rangle}
\def\gf{\mathfrak{g}}
\def \nf{\mathfrak{n}}
\def \hf{\mathfrak{h}}
\def \slf{\mathfrak{sl}}
\def \glf{\mathfrak{gl}}
\def \Gf{\mathfrak{G}}
\newcommand{\bb}{\mathbb}
\newcommand{\mf}{\mathfrak}
\def \eg{{\it e.g.}}
\def \ie{{\it i.e.}}
\def \KL{\mathrm{KL}}
\def \id{\mathbbm{1}}
\def \ik{\mathbb{C}}
\newcommand{\Hi}[1]{\mathrm{H}^{\tfrac{\infty}{2}+#1}}
\newcommand{\fibre}[1]{\underset{#1}{\times}}
\newcommand{\tensor}[1]{\underset{#1}{\otimes}}
\newcommand{\im}{{\textup{im}}}
\newcommand{\End}{{\textup{End}}}
\newcommand{\del}{{{\partial}}}
\newcommand{\Ph}{{{[\![\hbar]\!]}}}
\newcommand{\Lz}{{{(\!(z)\!)}}}
\newcommand{\Pt}{{{[\![t]\!]}}}
\newcommand{\red}[1]{/\!\!/\!\!/_{#1}}
\newcommand{\open}[1]{\overset{\circ}{#1}}
\title[Inverse Hamiltonian Reduction for affine W-algebras in type A]{Inverse Hamiltonian Reduction for affine W-algebras in type A}
\author{Dylan Butson and Sujay Nair}
\date{}
\begin{document} 
\begin{abstract}
We give a geometric proof of inverse Hamiltonian reduction for all affine W-algebras in type A at generic level, a certain embedding of the affine W-algebra corresponding to an arbitrary nilpotent in $\mathfrak{gl}_N$ into that corresponding to a larger nilpotent with respect to the closure order on orbits, tensored with an auxiliary algebra of free fields. We proceed by constructing strict chiral quantizations of equivariant Slodowy slices, sheaves of $\hbar$-adic vertex algebras on the arc spaces of the slices, and then localizing them on quasi-Darboux open sets. We also provide a generalization for the Drinfeld--Sokolov reduction of arbitrary vertex algebra objects in the Kazhdan--Lusztig category.
\end{abstract}
\maketitle
\vspace*{-\baselineskip}
\tableofcontents


\section{Introduction}

Given a nilpotent element $\chi$ in $\gf^* = \glf_N^*$ and a level $k\in \IC$, one can construct a certain vertex algebra, $\WW^k(\gf,\chi)$, called the affine W-algebra. In fact, up to isomorphism this vertex algebra only depends on the nilpotent orbit of $\chi$, and so can be labeled by $\mu= [\mu_1\ge \mu_2\ge \mu_3\ge\dots \mu_{n-1}\geq 0]$ a partition of $N$, and we identify the set of all such partitions of length $n-1$ with the set of dominant, integral coweights $\Lambda^+(\PGL_n)$ of $\PGL_n$.

For the special case where $\mu=[1^N]$, \ie, $\chi=0$, this vertex algebra is the universal affine vertex algebra $V^k(\gf)$. One has a homological construction of W-algebras for $\mu\ge [1^N]$ by quantum Hamiltonian (or Drinfeld--Sokolov) reduction of $V^k(\gf)$, see \cite{Kac:2003DS}. This construction has proven to be of great value in studying the representation theory of these algebras \cite{Arakawa:2007invent}.

Recently, for $\mu_1\ge\mu_2$ satisfying certain hypotheses, it has been shown by \cite{Genra:2025redbystages} that one can obtain $\WW^k_{\mu_1}$ from $\WW^k_{\mu_2}$ by a kind of partial Hamiltonian reduction. The main result of this work concerns \textit{inverse Hamiltonian reduction}, which realizes $\WW^k_{\mu_2}$ as a subalgebra of the tensor product of $\WW^k_{\mu_1}$ with so-called free fields.

\begin{thm}\label{THM:main W algebra}
	Let $\mu\in \Lambda^+(\PGL_n)$ and $\alpha$ a positive co-root such that $\mu+\alpha\in \Lambda^+(\PGL_n)$. Then, at generic level $k$, we have an embedding of $\hbar$-adic vertex algebras 
	\begin{equation}
		\WW_{\hbar,\mu}^k\hookrightarrow \WW^k_{\hbar,\mu+\alpha}\hat{\otimes} \DD^{ch}_{\rm loc}(\IA^{\rm len \,\alpha})~,
	\end{equation}
	for $\DD^{ch}_{\rm loc}(\IA^{\rm len\,\alpha})$ a localization of the $\hbar$-adic vertex algebra of chiral differential operators on $\IA^{\rm len \,\alpha}$.
\end{thm}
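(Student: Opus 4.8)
The plan is to realize both affine W-algebras as global sections of sheaves of $\hbar$-adic vertex algebras on the arc space of an equivariant Slodowy slice, and to obtain the embedding as the restriction map to a distinguished open chart on which the slice factorizes as a product. First I would fix an $\slf_2$-triple $(e,h,f)$ with $f$ in the orbit labelled by $\mu$, form the equivariant Slodowy slice $\SS_\mu$ transverse to the orbit, and use the strict chiral quantization of its arc space $J_\infty\SS_\mu$: a sheaf of $\hbar$-adic vertex algebras whose vertex algebra of global sections is $\WW^k_{\hbar,\mu}$ and whose classical ($\hbar\to 0$) limit is the commutative vertex algebra of functions on $J_\infty\SS_\mu$ with its induced Poisson vertex structure. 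The equivariant structure is what ties the slice quantization to the W-algebra through the Drinfeld--Sokolov reduction, and strictness --- flatness over $\IC\Ph$ --- is what will let classical statements be lifted to the $\hbar$-adic setting.

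The geometric heart of the argument is the comparison of the two slices. Because $\alpha$ is a positive coroot with $\mu+\alpha$ dominant, the orbit dimensions differ by $2\,\mathrm{len}\,\alpha$, so that $\dim\SS_\mu-\dim\SS_{\mu+\alpha}=2\,\mathrm{len}\,\alpha$ and $\SS_{\mu+\alpha}$ arises as a Poisson transversal inside $\SS_\mu$. I would exhibit a quasi-Darboux open subscheme $U\subset\SS_\mu$ meeting the larger orbit on which the Poisson structure factorizes, $U\cong\SS_{\mu+\alpha}\times\IA^{2\,\mathrm{len}\,\alpha}$, the second factor being a symplectic affine space in Darboux coordinates. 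Passing to arc spaces gives $J_\infty U\cong J_\infty\SS_{\mu+\alpha}\times J_\infty\IA^{2\,\mathrm{len}\,\alpha}$, and quantizing this chart identifies the sheaf of $\hbar$-adic vertex algebras over $J_\infty U$ with $\WW^k_{\hbar,\mu+\alpha}\,\hat{\otimes}\,\DD^{ch}_{\rm loc}(\IA^{\mathrm{len}\,\alpha})$, the second tensor factor being the localized chiral quantization of $T^*\IA^{\mathrm{len}\,\alpha}$, i.e. a localized $\beta\gamma$-system.

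With these pieces in place the embedding is the composite of the restriction of global sections to the open chart $J_\infty U$ with the identification above,
\begin{equation}
	\WW^k_{\hbar,\mu}\longrightarrow \WW^k_{\hbar,\mu+\alpha}\,\hat{\otimes}\,\DD^{ch}_{\rm loc}(\IA^{\mathrm{len}\,\alpha}),
\end{equation}
and it remains to show this map is injective. I would first check injectivity of its classical limit, where it reduces to the statement that restriction of functions on $J_\infty\SS_\mu$ to the dense open $J_\infty U$ has no kernel; this holds because $U$ is dense in the irreducible reduced slice. Injectivity of the $\hbar$-adic map then follows from strictness together with $\hbar$-adic flatness: a map of flat $\IC\Ph$-modules whose reduction modulo $\hbar$ is injective is itself injective. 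This is the step where generic level $k$ enters, since genericity is what guarantees the quantization is strict and the global-sections functor well-behaved, the non-generic levels forming the locus where the classical-to-quantum comparison can acquire a kernel.

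The step I expect to be the main obstacle is constructing the quasi-Darboux chart chirally and identifying the free-field factor precisely. While the classical symplectic factorization of Slodowy slices is geometrically transparent, upgrading it to an isomorphism of sheaves of $\hbar$-adic vertex algebras on arc spaces requires tracking the chiral Poisson bracket through the localization, verifying that the Darboux coordinates on the symplectic affine factor quantize to a genuinely free $\beta\gamma$-system rather than to some more complicated vertex algebra, and controlling the interaction between localization and $\hbar$-adic completion. By contrast, once the factorization is established at the level of sheaves of vertex algebras, the injectivity and genericity arguments are comparatively formal.
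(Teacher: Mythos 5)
Your overall strategy---realize the W-algebra as global sections of a sheaf of $\hbar$-adic vertex algebras on the arc space of a slice, restrict to a quasi-Darboux chart on which the slice factorizes, and identify the restricted sheaf with the tensor product of the smaller W-algebra and a localized $\beta\gamma$-system---is indeed the strategy of the paper. But the step you flag as ``the main obstacle'' and propose to handle by ``tracking the chiral Poisson bracket through the localization'' is precisely where the actual content lies, and your proposal leaves it unresolved. The paper does \emph{not} verify the identification of the chart quantization by computation. Instead it uses a rigidity argument from deformation theory: the moduli space of graded (weakly $G(\OO)$-equivariant, strongly $\IG_\alpha(\OO)$-equivariant) quantizations of $\OO(\JJ(\open{T^*}\IG_\alpha\times S_{G,\mu+\alpha}))$ is a torsor for $H^3_{\rm dR}$, which is one-dimensional, so the pullback ${\bf m}_\infty^*\AA^k_{\hbar,G,\mu}$ must be isomorphic to ${\bf W}^{k'}_{\hbar,G,\mu+\alpha}\otimes\DD^{ch}_\hbar(\open{T^*}\IG_\alpha)$ for \emph{some} $k'$; a separate argument is then needed to pin down $k'=k$, which the paper supplies via reduction by stages (${\bf W}^k_{\hbar,G,\mu}\red{\chi_\alpha}\IG_\alpha(\OO)\cong{\bf W}^k_{\hbar,G,\mu+\alpha}$) together with the dual-pair structure of the commuting $V^{k-2h^\vee}_\hbar(\gf)$ current inside the equivariant W-algebra. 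Neither the rigidity argument nor the level-matching appears in your proposal, and without them the identification of the chart quantization---in particular at the correct level---is unsupported.

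A second, related gap: you work directly with the (ordinary) Slodowy slice $S_\mu$ and assert that global sections of its quantized arc space give $\WW^k_{\hbar,\mu}$. But $S_\mu$ is only Poisson, not symplectic, so the deformation-theoretic classification of quantizations (which is what makes the rigidity argument run) does not apply to it. The paper must pass through the \emph{equivariant} Slodowy slice $S_{G,\mu}\cong G\times S_\mu$, which is symplectic; its quantization has global sections the equivariant W-algebra ${\bf W}^k_{\hbar,G,\mu}$, and the ordinary W-algebra is only recovered at the end as $G(\OO)$-invariants. This forces an extra step you omit: the embedding must be shown to be \emph{strongly} $G(\OO)$-equivariant (the paper does this by a vertex Poisson Casimir argument using the variational Poisson cohomology of the slice) before one can take invariants and deduce the stated embedding $\WW^k_{\hbar,\mu}\hookrightarrow\WW^k_{\hbar,\mu+\alpha}\hat\otimes\DD^{ch}_{\rm loc}$. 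Your injectivity argument (classical injectivity by density plus $\hbar$-adic flatness) is fine, and your geometric picture of the chart is essentially correct up to the detail that the free-field factor is an open subset $\open{T^*}\IG_\alpha$ of $T^*\IA^{{\rm len}\,\alpha}$ with one coordinate inverted, not the full symplectic affine space.
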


This theorem completes the generalization to type A of a family of results going back to \cite{Semikhatov:inv}, as we will explain below. It is obtained in much the same way as in \cite{BN:finite}, where we established the analogous result for finite W-algebras. Indeed, crucial ingredients in the proof are the geometric results on equivariant Slodowy slices obtained in \textit{loc.\ cit.}. The other crucial ingredient, like in finite type, is deformation theory---now of vertex algebras as opposed to associative algebras. In a sense, this work is an application of the results of \cite{BN:def} on deformations of quantizations of arc spaces.

Our proof proceeds by first constructing quantizations of the arc spaces of the equivariant Slodowy slices. Namely, for each $k\in \IC$, there is a sheaf of $\hbar$-adic vertex algebras $\AA^k_{\hbar,G,\mu}$ on $\JJ S_{G,\mu}$ whose $\hbar$-adic vertex algebra of global sections
is isomorphic to the $\hbar$-adic analogue of the equivariant affine W-algebra at level $k$ of \cite{Arakawa:2018egx}, that is, 
\[
\Gamma(\JJ S_{G,\mu} , \AA^k_{\hbar,G,\mu}) \cong {\bf W}^k_{\hbar,G,\mu}~.
\] We then prove inverse Hamiltonian reduction for these equivariant W-algebras, constructing the inclusion by localizing this sheaf on the image of the inverse Hamiltonian reduction chart of \cite{BN:finite} and appealing to the deformation theory results of \cite{BN:def}.
\begin{thm}
	Let $\mu\in \Lambda^+(\PGL_n)$ and $\alpha$ a positive co-root such that $\mu+\alpha\in \Lambda^+(\PGL_n)$. Then, at generic level $k$, we have an embedding of $\hbar$-adic vertex algebras 
	\begin{equation}
		{\bf W}_{\hbar,G,\mu}^k\hookrightarrow {\bf W}^k_{\hbar,G,\mu+\alpha}\hat{\otimes} \DD^{ch}_{\rm loc}(\IA^{\rm len \,\alpha})~.
	\end{equation}
\end{thm}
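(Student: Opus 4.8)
The plan is to transport the finite-type argument of \cite{BN:finite} to the chiral setting, using arc spaces and the strict chiral quantizations $\AA^k_{\hbar,G,\mu}$ in place of the associative quantizations of the equivariant Slodowy slices. The geometric backbone is the inverse Hamiltonian reduction chart constructed in \emph{loc.\ cit.}: an open immersion exhibiting a localization of the equivariant Slodowy slice $S_{G,\mu}$ as a product $S_{G,\mu+\alpha}\times T^*\IA^{\rm len\,\alpha}$, with the second factor symplectic. First I would apply the arc space functor $\JJ$, which preserves open immersions and products, to obtain an open subset $\JJ U\subseteq \JJ S_{G,\mu}$ together with an isomorphism $\JJ U\cong \JJ S_{G,\mu+\alpha}\times \JJ(T^*\IA^{\rm len\,\alpha})_{\rm loc}$ of the underlying arc spaces, compatible with their Poisson vertex structures.

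Next I would restrict the sheaf $\AA^k_{\hbar,G,\mu}$ to $\JJ U$ and localize along the function cutting out the chart, producing a sheaf of $\hbar$-adic vertex algebras on $\JJ U$ that is a strict chiral quantization of the Poisson vertex structure on $\JJ U$. On the other hand, the sheaf $\AA^k_{\hbar,G,\mu+\alpha}\,\hat{\otimes}\,\DD^{ch}_{\rm loc}(\IA^{\rm len\,\alpha})$ is \emph{also} a strict chiral quantization of the same classical arc space, the second tensor factor being the $\beta\gamma$-system of chiral differential operators quantizing $\JJ T^*\IA^{\rm len\,\alpha}$, localized; this is the promised free-field algebra. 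The content of the theorem is therefore the identification of these two quantizations over $\JJ U$.

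The crux, and the step I expect to be the main obstacle, is precisely this identification of chiral quantizations, which is where the deformation theory of \cite{BN:def} and the generic level hypothesis enter. One must show that the space of strict chiral quantizations of $\JJ U$ is rigid: the relevant deformation and obstruction cohomology of the Poisson vertex algebra vanishes away from a discrete set of levels, so that for generic $k$ any two quantizations with the same classical limit are canonically isomorphic as sheaves of $\hbar$-adic vertex algebras. The subtlety is twofold: verifying that the hypotheses of \cite{BN:def} apply to the localized slice, in particular that the classical structure is unobstructed, and tracking how the discrete locus of bad levels depends on $\mu$ and $\alpha$ so that ``generic $k$'' may be taken uniformly. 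Unlike in finite type, where rigidity of the Weyl algebra is classical, here one genuinely needs the chiral deformation-theoretic input of \cite{BN:def}.

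Finally I would pass to global sections. By construction $\Gamma(\JJ S_{G,\mu},\AA^k_{\hbar,G,\mu})\cong {\bf W}^k_{\hbar,G,\mu}$, and the restriction-and-localization map to $\Gamma(\JJ U,-)$ is injective because $\AA^k_{\hbar,G,\mu}$ is torsion-free over $\IC\Ph$ and the localizing function acts as a non-zero-divisor. Composing this injection with the isomorphism $\Gamma(\JJ U,-)\cong {\bf W}^k_{\hbar,G,\mu+\alpha}\,\hat{\otimes}\,\DD^{ch}_{\rm loc}(\IA^{\rm len\,\alpha})$ supplied by the previous step yields the desired embedding of $\hbar$-adic vertex algebras.
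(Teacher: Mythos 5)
Your overall architecture matches the paper's: pull back $\AA^k_{\hbar,G,\mu}$ along the arc space of the IHR chart of \cite{BN:finite}, recognize the result as a quantization of $\JJ\big(\open{T^*}\IG_\alpha\times S_{G,\mu+\alpha}\big)$, identify it with $\DD^{ch}_\hbar(\open{T^*}\IG_\alpha)\otimes{\bf W}^k_{\hbar,G,\mu+\alpha}$ by deformation theory, and compose with the injective restriction map on global sections. However, there is a genuine gap at what you correctly flag as the crux. The deformation-theoretic input of \cite{BN:def} does \emph{not} say that the localized chart is rigid: the moduli space of graded quantizations of $\OO(\JJ Y)$ is a \emph{torsor} for $H^3_{\rm dR}(Y)$, and for $Y=\open{T^*}\IG_\alpha\times S_{G,\mu+\alpha}$ this group is $\IC$, one-dimensional, not zero. (It is purely geometric and does not depend on $k$ at all, so your picture of deformation cohomology ``vanishing away from a discrete set of levels'' is not how the generic-level hypothesis enters.) Consequently the pullback of $\AA^k_{\hbar,G,\mu}$ is only isomorphic to $\DD^{ch}_\hbar(\open{T^*}\IG_\alpha)\otimes{\bf W}^{k'}_{\hbar,G,\mu+\alpha}$ for \emph{some} $k'$, and the substantive content of the theorem is showing $k'=k$. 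Your argument, as written, silently assumes this.

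The paper closes this gap in two stages that your proposal is missing. First it proves chiral reduction by stages, ${\bf W}^k_{\hbar,G,\mu}\red{\chi_\alpha}\IG_\alpha(\OO)\cong{\bf W}^k_{\hbar,G,\mu+\alpha}$ at the \emph{same} level $k$; the level-matching there uses the dual-pair statement that $V^{k-2h^\vee}(\gf)$ and $\WW^k_\mu$ are mutual commutants inside ${\bf W}^k_{G,\mu}$, so that the surviving $V^{k-2h^\vee}(\gf)$ current subalgebra pins down the level after reduction. Second, it applies Drinfeld--Sokolov reduction for $\IG_\alpha(\OO)$ to both sides of the putative isomorphism of quantizations on the chart: the left side computes ${\bf W}^k_{\hbar,G,\mu+\alpha}$ by reduction by stages (the cohomology sheaf is supported inside the chart), while the right side computes ${\bf W}^{k'}_{\hbar,G,\mu+\alpha}$, forcing $k'=k$. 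You would also need the equivariance bookkeeping (weak $G(\OO)$- and strong $\IG_\alpha(\OO)$-equivariance of all quantizations and isomorphisms involved, the latter supplied by the lifting criterion for strong actions) both to control the moduli space and to make the reduction step legitimate. Without some substitute for this level-identification argument, the proof is incomplete.
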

Inverse hamiltonian reduction for the ordinary affine W-algebras follows by taking $G(\OO)$-invariants. The result for equivariant W-algebras in fact implies a stronger version of inverse Hamiltonian reduction for arbitrary modules in the Kazhdan--Lusztig category. Namely, we prove the following:
\begin{thm}
	Let $M\in {\rm KL}_k$, and denote by $H^0_{\rm DS}(M,\mu)$ the Drinfeld--Sokolov reduction of $M$ at some representative of the orbit labeled by $\mu$. Then we have natural embeddings of $\WW^k_{\hbar,\mu}$ modules
	\begin{equation}
	H^0_{\rm DS}(M,\mu) \hookrightarrow H^0_{\rm DS}(M,\mu+\alpha)\hat{\otimes} \DD^{ch}_{\rm loc}(\IA^{\rm len\,\alpha})~,
	\end{equation}
	where the $\WW_{\hbar,\mu}^k$-module structure on the right comes from the embedding in Theorem \ref{THM:main W algebra}. In particular, if $M$ is a vertex algebra object in $\KL_k$, this is an embedding of vertex algebras.
\end{thm}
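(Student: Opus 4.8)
The plan is to upgrade the geometric mechanism behind the equivariant embedding from the W-algebra itself to its Drinfeld--Sokolov reductions, and only afterwards to descend to ordinary modules by taking $G(\OO)$-invariants. First I would attach to each $M\in\KL_k$ a sheaf $\MM^k_{\hbar,G,\mu}$ of modules over the sheaf of $\hbar$-adic vertex algebras $\AA^k_{\hbar,G,\mu}$ on $\JJ S_{G,\mu}$, obtained by applying the equivariant chiral BRST reduction to the arc-space quantization of $M$ in exactly the way $\AA^k_{\hbar,G,\mu}$ is produced from $V^k(\gf)$. By construction its global sections compute the equivariant Drinfeld--Sokolov reduction of $M$, and passing to $G(\OO)$-invariants returns $H^0_{\DS}(M,\mu)$, so that $\MM^k_{\hbar,G,\mu}$ is the module-theoretic analogue of the identification $\Gamma(\JJ S_{G,\mu},\AA^k_{\hbar,G,\mu})\cong\mathbf{W}^k_{\hbar,G,\mu}$.

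A preliminary point, which I would record at the outset, is that for generic $k$ and $M\in\KL_k$ the BRST complex computing this reduction is concentrated in cohomological degree zero, by exactness of Drinfeld--Sokolov reduction in the Kazhdan--Lusztig category. Consequently $\MM^k_{\hbar,G,\mu}$ is an honest sheaf rather than a complex, and the cohomology $H^0_{\DS}$ commutes with the restriction and localization functors used below; without this the subsequent comparison would only hold in the derived sense.

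Next I would localize $\MM^k_{\hbar,G,\mu}$ on the inverse Hamiltonian reduction chart $U\subset\JJ S_{G,\mu}$ that appears in the proof of the equivariant embedding established above. Over $U$ the arc space of the slice factors, after localization, as a product of $\JJ S_{G,\mu+\alpha}$ with the arc space of the free symplectic part; by the factorization structure of the reduction the restricted module sheaf decomposes as the external tensor product of $\MM^k_{\hbar,G,\mu+\alpha}$ with the sheaf of chiral differential operators on that free part, whose sections are $\DD^{ch}_{\rm loc}(\IA^{\rm len\,\alpha})$. Taking global sections of the restriction map and then $G(\OO)$-invariants produces the map
\begin{equation}
H^0_{\DS}(M,\mu)\ \longrightarrow\ H^0_{\DS}(M,\mu+\alpha)\hat{\otimes}\DD^{ch}_{\rm loc}(\IA^{\rm len\,\alpha})~,
\end{equation}
which is $\WW^k_{\hbar,\mu}$-linear precisely because both the module structures and the restriction are induced by the single localization that yields the algebra embedding of Theorem \ref{THM:main W algebra}.

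The hard part will be injectivity of this map. For the W-algebra itself this holds because the quantized structure sheaf is $\hbar$-torsion-free with torsion-free classical limit, hence has no local sections supported on the complement of the chart, whose codimension is positive; I would need the same torsion-freeness for $\MM^k_{\hbar,G,\mu}$. I expect this to follow from the smoothness of modules in $\KL_k$, which makes the classical limit of $\MM^k_{\hbar,G,\mu}$ a coherent sheaf that is generically free over the structure sheaf of the irreducible scheme $\JJ S_{G,\mu}$, so that restriction to the dense open $U$ is injective exactly as in the algebra case. Finally, when $M$ is a vertex algebra object in $\KL_k$ the entire construction is monoidal: $\MM^k_{\hbar,G,\mu}$ is then a sheaf of vertex algebras, the decomposition over $U$ is an isomorphism of such, and the displayed map is a homomorphism of vertex algebras, upgrading the embedding to the asserted embedding of vertex algebras.
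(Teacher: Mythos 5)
Your overall strategy---spreading $M$ out as a sheaf of modules over $\AA^k_{\hbar,G,\mu}$ on $\JJ S_{G,\mu}$ and restricting to the inverse Hamiltonian reduction chart---is not the paper's route, and it has a genuine gap at its foundation: for an arbitrary $M\in\KL_k$ there is no canonical ``arc-space quantization of $M$'' and hence no canonical quasicoherent sheaf of $\AA^k_{\hbar,G,\mu}$-modules on $\JJ S_{G,\mu}$ to localize. The localization machinery in the paper applies to the vertex algebra itself because its classical limit is the structure sheaf $\OO_{\JJ S_{G,\mu}}$; a general Kazhdan--Lusztig module need not admit a filtration whose associated graded is generically free over $\OO_{\JJ S_{G,\mu}}$---its singular support can be a proper subvariety---so both your construction of the module sheaf and your injectivity argument (``generically free over the irreducible scheme, hence restriction to the dense open is injective'') break down for general $M$. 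Likewise, recovering $H^0_{\rm DS}(M,\mu)$ by ``taking $G(\OO)$-invariants'' of an equivariant reduction is not a formality: at the shifted level $-2h^\vee$ the correct invariants functor is relative semi-infinite cohomology, and that identification is a nontrivial theorem rather than something true ``by construction.''

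The paper sidesteps all of this with a kernel trick that your proposal is missing. By Proposition \ref{prop:BRST equivariant W alg} one has
\begin{equation*}
H^0_{\rm DS}(M,\mu)\ \cong\ \Hi{0}\big(\widehat{\gf}_{-2h^\vee},\gf,\,{\bf W}^k_{\hbar,G,\mu}\otimes M\big)
\end{equation*}
as $\WW^k_{\hbar,\mu}$-modules, so the only geometric input required is the already-established algebra embedding $\Psi_{\mu,\alpha}\colon {\bf W}^k_{\hbar,G,\mu}\hookrightarrow {\bf W}^k_{\hbar,G,\mu+\alpha}\otimes\DD^{ch}_\hbar(\open{T^*}\IG_\alpha)$ of Theorem \ref{thm:IHR embedding}, together with the fact (Proposition \ref{ref:prop embedding is strong}) that it is \emph{strongly} $G(\OO)$-equivariant. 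Strong equivariance is exactly what makes $\Psi_{\mu,\alpha}\otimes \mathrm{id}_M$ a map of relative semi-infinite cochain complexes; since $\DD^{ch}_\hbar(\open{T^*}\IG_\alpha)$ is inert for the diagonal $\widehat{\gf}$-action, it factors out of the cohomology and the embedding of the theorem follows, with no sheaf of modules over the arc space ever being constructed. Your proposal has no substitute for this step, and the pieces it substitutes (the module sheaf, naive invariants, generic freeness) are each either undefined or false for general objects of $\KL_k$.
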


\subsection{Relation to other work}

The first example of inverse Hamiltonian reduction for affine W-algebras was the work of Semikhatov \cite{Semikhatov:inv}, which considered the $\slf_2$ case. More recently, higher rank and superalgebra examples have been constructed by Adamovi\'c in \cite{Adamovic:2004zi, Adamovic2019ER}. The first somewhat systematic proof of inverse Hamiltonian reduction in type $A$ is due to Fehily \cite{Fehily:2023mcg}, who proved the statement for hook-type W-algebras using the free-field realizations of these W-algebras from \cite{Genra2017scre}. This approach was extended to all examples in rank three in \cite{Creutzig:2025struct,Fasquel:2024ssg}, and further to all Virasoro-type reductions in \cite{Fasquel:virasoro}.

Additionally, the present work also offers a novel proof of what is known in the literature as reduction by stages. This is a result first conjectured, for finite type, in Morgan's thesis \cite{Morgan:2015redbystages} and proven in the finite and affine case by \cite{Genra:2024finred,Genra:2025redbystages}. We note, however, that our hypotheses differ from those of \textit{loc.\ cit.} in Type A.

In a sense, inverse Hamiltonian reduction can be viewed as a special case of a generalized coproduct on truncations of the shifted affine Yangian of $\glf_n$. For the finite Yangian, this comultiplication was described in \cite{Finnkelberg:2018comult}. In the affine case, however, constructing the general coproduct has proven to be a challenging endeavor. In the case of rectangular nilpotents, some results have been obtained in \cite{Kodera:2022coproduct,Gaiotto:2023ynn} and geometric approaches to this problem can be found in \cite{BR1, Butson:2023fcv}.


Unlike \cite{BN:finite}, we make almost no reference to the affine Grassmannian, or generalized slices therein. While the results of our previous work related various quantizations of certain generalized slices in the affine Grassmannian to each other, this work relates strict chiral quantizations of those same slices. One wonders whether this may have an interpretation in terms of the geometry of the double affine Grassmannian, (see \cite{Braverman:2007dvq}), but we leave this speculation to future work.

Finally, this work is yet another entry into the ever-growing list of geometric free-field realizations. The idea of obtaining free-field realizations by constructing a sheaf of vertex algebras and localizing them onto a global (quasi-)Darboux patch originates in \cite{Beem:2019tfp}. More recently, this viewpoint has been used successfully to provide free-field realizations for several vertex algebras originating from supersymmetric gauge theories---see \cite{Arakawa:2023cki,Coman:2023xcq,Beem:2023dub, Beem:2022univ, Fur} for an incomplete list.

\subsection*{Acknowledgments}

The authors would like to thank Dra\v{z}en Adamovi\'c, Tomoyuki Arakawa, Justine Fasquel, Zachary Fehily, Naoki Genra, Thibault Juillard, Toshiro Kuwabara, Shigenori Nakatsuka, and Yehao Zhou for useful discussions. We would like to especially thank Christopher Beem, whose collaboration in the early stages of this project was invaluable.

We gratefully acknowledge the support of ERC grant \# 864828, and for D.B. the support of the Simons Collaboration - New Structures in Low-dimensional Topology grant.


\section{Preliminaries}

\subsection{Vertex algebras and vertex Poisson algebras}

Write $\ik[\del,z]$ for the global sections of differentials operators on $\IA^1$, and write $\ik[\del]$ for the subalgebra of weakly translation-invariant differential operators.

The category $\ik[\del]$-Mod is symmetric monoidal under $\tensor{\ik}$ with coproduct $\del\mapsto 1\otimes \del + \del\otimes 1$. Therefore, it makes sense to talk about algebras in $\ik[\del]$-Mod. In particular, a unital, commutative, associative algebra in $\ik[\del]$-Mod, is a unital, commutative, associative algebra over $\ik$ equipped with an action of $\del$ by derivations.

\begin{dfn}\label{dfn:Lie star}
	A Lie* algebra  is a $\ik[\del]$-module $L$ equipped with $\IN$-many $\ik$-bilinear operations 
	$$
	_{(n)}: L\otimes L \rightarrow L
	$$
	for $n\in \IN$, satisfying
	\begin{enumerate}[(i)]
		\item $x_{(n)}y =0 $ for some $n\gg 0$ \textit{locality}
		\item $x_{(n)}y = (-1)^{n+1} \sum_{j=0}^\infty \frac{(-1)^j}{j!} \partial^j y_{(n+j)}x$  \textit{skewsymmetry}
		\item $x_{(m)}(y_{(k)}z) - y_{(k)}(x_{(m)}z)  = \sum_{j=0}\begin{pmatrix}m\\ j\end{pmatrix} (x_{(j)}y)_{(m+k-j)}z $ \textit{weak associativity}
		\item $(\del x)_{(n)} y = - n x_{(n-1)}y, (\del x)_{(0)}y=0$ \textit{sesquilinearity}
	\end{enumerate}
\end{dfn}
Elsewhere in the literature, these algebras are known as Lie conformal algebras or vertex Lie algebras. We align our naming conventions with those of \cite{BeilinsonDrinfeld}.

\begin{dfn}
	A vertex Poisson algebra is a pair $(V,\ket{0})$ equipped with a family of products
	$$
	_{(n)}: V\otimes V \rightarrow V
	$$
	for $n\in \IZ$ and $n\ge-1$, such that
	\begin{enumerate}[(i)]
		\item the product $_{(-1)}$ makes $V$ a commutative algebra in $\ik[\del]$-Mod, with unit $\ket{0}\in V$.
		\item $(V,\{ _{(n)}, n\ge 0\})$ is a Lie* algebra 
		\item for any $v,a,b\in V$ and any $n\ge0$, 
		\[
		v_{(n)} a_{(-1)} b = (v_{(n)}a)_{(-1)}b + a_{(-1)} (v_{(n)}b) 
		\]
	\end{enumerate}
\end{dfn}
We call the last condition the \emph{quasi-Leibniz rule}---quasi since it is not symmetric. One should compare this to the characterisation of Poisson algebras as a vector space equipped with a commutative multiplication and a Lie bracket such that the Lie bracket distributes over the commutative product as a derivation.

It is conventional to collect the various products together into a $\lambda$-bracket---a $\ik$-bilinear map $[\cdot_\lambda\cdot] : V\otimes V \rightarrow V[\lambda]$ defined as 
\begin{equation}
	[a_\lambda b] \coloneqq \sum_{n=0}^{\infty} \frac{\lambda^n}{n!} a_{(n)}b~.
\end{equation}

\begin{dfn}
A vertex algebra is a tuple $(V,\ket{0},\{_{(n)}| n\in\IZ\})$, where $V$ is a $\ik[\del]$-module, $\ket{0}\in \ker\del\subset V$ is a distinguished vector called the \emph{vacuum}, and 
	$$
	_{(n)}: V\otimes V \rightarrow V
	$$
is a $\IZ$-family of products on $V$ satisfying
\begin{enumerate}[(i)]
	\item $x_{(n)}\ket{0} = \delta_{n,-1}x$ for any $x\in V$ \textit{vacuum}
	\item $x_{(k)}y =0$ for $k\gg 0$ \textit{locality}
	\item $x_{(n)}y = (-1)^{n+1} \sum_{j=0}^\infty \frac{(-1)^j}{j!} \partial^j y_{(n+j)}x$  \textit{skewsymmetry}
	\item $x_{(m)}(y_{(n)}z) - y_{(m)}(x_{(n)}z)  = \sum_{j=0}\begin{pmatrix}m\\ j\end{pmatrix} (x_{(j)}y)_{(m+n-j)}z $ \textit{weak associativity}
	\item $(\del x)_{(n)} y = - n x_{(n-1)}y$ \textit{sesquilinearity}
\end{enumerate}
for $x,y,z\in V$ and $m,n\in \IZ$.
\end{dfn}

We can collect together the various $_{(n)}$-products into a State-Field Correspondence, 
$$Y(\cdot,z): V\rightarrow \End(V)\Lz$$ 
with $a(z) \equiv Y(a,z) \coloneqq \sum_{n\in \IZ} a_{(n)}z^{-n-1}$. This leads to the Operator Product Expansions (OPEs).
\begin{equation}
	a(z)b(w) \sim \sum_{n=0}^\infty \frac{(a_{(n)}b)(w)}{(z-w)^{n+1}} 
\end{equation}
where the right hand side should be understood as the expansion of the left hand side in the region $|z|>|w|$.

\begin{rem}
	Given a vertex algebra $(V, \ket{0},\{_{(n)}|n\in\IZ\})$, we say that it is generated by a vector subspace $W\subset V$, if any vector $v \in V$, with $v\neq \ket{0}$, can be written as a finite linear combination of monomials of the form $w_{1,(-n_1)}w_{2,(-n_2)}\dots w_{m,(-n_m)}$ where $w_1,w_2,\dots,w_m\in W$ and $n_1,n_2,\dots,n_m>0$. In other words, $V$ is \emph{strongly generated} by any basis of $W$, but we are lax about whether this is a minimal set of generators.
\end{rem}

\subsection{Arc spaces and vertex Poisson algebras}

Let $\ID$ denote the formal disk, \ie, $\ID = {\rm Spf}(\ik\Pt)$. Let $Y$ be a scheme over $\ik$. The functor 
\begin{equation}
\begin{split}
	 {\rm Sch}^{\rm op}_{/\ik} & \rightarrow {\rm Set}\\
	 S  & \mapsto {\rm Hom}(S\times \ID,Y)
\end{split}
\end{equation}
is representable by the scheme $\JJ Y$. Its $\ik$-points are ${\rm Hom}(\ID,Y)$ and we have a natural projection ${\rm ev}: \JJ Y \rightarrow Y$ by evaluating at the closed point of $\ID$. We write $\OO_{\JJ Y}$ for the structure sheaf of $\JJ Y$.

We have a natural evaluation map ${\rm ev}: \JJ Y\twoheadrightarrow Y$, by evaluating a map at the closed point of $\ID$. The fibres of this map can be identified with $\IA^\infty$, and so $\JJ Y$ contracts to $Y$. The pullback ${\rm ev}^*$ provides a natural inclusion $\OO_Y\subset \OO_{\JJ Y}$. Henceforth, the inclusion  $\OO_Y\subset \OO_{\JJ Y}$ always refers to this pullback along ${\rm ev}$.

If $Y$ is Poisson, the global sections $\OO(\JJ Y)$ have the structure of a vertex Poisson algebra, specified by 
\begin{equation}
	[a_\lambda b] = \{a,b\}~,
\end{equation}
for $a,b\in \OO(Y)\subset \OO(\JJ Y)$ and extended to arbitrary sections by $\ik[\del]$-sesquilinearity and the quasi-Leibniz rules. Moreover, $\OO_{\JJ Y}$ is a sheaf of vertex Poisson algebras, see \cite{Arakawa2015:local} for more details on localizing vertex Poisson algebras.

\begin{prop}
	The assignment $Y\mapsto \JJ Y$ is functorial
\end{prop}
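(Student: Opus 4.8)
The plan is to establish functoriality directly from the representability of the jet functor, by showing that the assignment $Y \mapsto \JJ Y$ is induced by a morphism of functors of points. First I would recall that $\JJ Y$ represents the functor $S \mapsto \Hom(S \times \ID, Y)$. Given a morphism of $\ik$-schemes $f : Y \to Y'$, I would construct a natural transformation between the corresponding functors of points: for each test scheme $S$, the map $\Hom(S \times \ID, Y) \to \Hom(S \times \ID, Y')$ is simply post-composition with $f$, sending $\varphi \mapsto f \circ \varphi$. This is manifestly natural in $S$, since base change along any $S' \to S$ commutes with post-composition. By the Yoneda lemma, this natural transformation is induced by a unique morphism of schemes $\JJ f : \JJ Y \to \JJ Y'$.

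The remaining task is to verify that this assignment respects identities and composition. For the identity $\mathrm{id}_Y$, post-composition with $\mathrm{id}_Y$ is the identity natural transformation, so by uniqueness in Yoneda, $\JJ(\mathrm{id}_Y) = \mathrm{id}_{\JJ Y}$. For composition, given $f : Y \to Y'$ and $g : Y' \to Y''$, I would observe that post-composition with $g \circ f$ equals the composite of post-composition with $f$ followed by post-composition with $g$, as natural transformations; uniqueness in Yoneda then forces $\JJ(g \circ f) = \JJ g \circ \JJ f$. I would also note compatibility with the evaluation maps: since $\ev : \JJ Y \to Y$ corresponds to restriction along the inclusion of the closed point $\{\pt\} \times S \hookrightarrow S \times \ID$, the square relating $\ev$ on $\JJ Y$ and $\JJ Y'$ to $f$ and $\JJ f$ commutes, so $\JJ$ is a functor over the identity on $\ik$-schemes refining the contraction to $Y$.

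I do not expect any serious obstacle here; the statement is essentially a formal consequence of representability together with the Yoneda lemma, and the content is purely bookkeeping at the level of functors of points. The only point requiring a modicum of care is ensuring that the natural transformation is genuinely natural, \ie, that post-composition commutes with the base-change maps defining the functor on morphisms of test schemes; this follows immediately from the associativity of composition in the category of schemes. One might additionally remark, for later use, that when $f$ is an open (resp.\ closed) immersion the induced map $\JJ f$ is again an open (resp.\ closed) immersion, and that $\JJ$ preserves fiber products, both of which follow from the description of $\JJ Y$ as representing a limit-preserving functor; but these refinements are not needed for the bare functoriality statement asserted in the proposition.
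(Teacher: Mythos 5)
Your argument is correct. The paper states this proposition without any proof at all, treating it as standard, so there is nothing to compare against; your Yoneda-lemma argument (post-composition with $f$ defines a natural transformation of the represented functors $S \mapsto \Hom(S\times\ID, Y)$, hence a unique morphism $\JJ f$, with identities and composition following from uniqueness) is exactly the expected justification, and your remarks on compatibility with $\ev$ and preservation of immersions and fiber products are accurate and indeed implicitly used later in the paper.
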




Vertex algebras and vertex Poisson algebras admit an operadic construction, see \cite{BeilinsonDrinfeld, Bakalov:2019operads}, and so there are natural cohomology theories controlling their deformations.
Explicit constructions of the relevant cochain complexes can be found in \cite{Bakalov:2019operads, Bakalov:2021classvar,DeSole2013:var}. For brevity, we shall not recall their definitions here. Instead we recall a result on the so-called variational Poisson cohomology, from \cite{BN:def} that is a generalization of \cite{DeSole2013:var}.

\begin{prop}[\cite{BN:def}]\label{thm:BN def variational}
Suppose $Y$ is an affine, symplectic variety. The variational Poisson cohomology of $\OO(\JJ Y)$, with its natural vertex Poisson structure, can be computed as 
\begin{equation}
	H^i_{\rm var}(\OO(\JJ Y)) = 
	\begin{cases}
	H^0_{\rm dR}(Y) \oplus H^1_{\rm dR}( Y) \quad \text{for } i=0 \\
	H^{i+1}_{\rm dR}(Y) \quad \text{for } i>0
	\end{cases}
\end{equation}
\end{prop}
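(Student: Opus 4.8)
The plan is to reduce the computation of variational Poisson cohomology to the topological problem of computing the de Rham cohomology of $Y$, exploiting the symplectic structure in two stages: first to trivialize the Poisson (Lichnerowicz) differential, and then to resolve the resulting variational de Rham complex against the ordinary de Rham complex of $Y$.

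First I would fix a model for the complex computing $H^\bullet_{\rm var}(\OO(\JJ Y))$ using the explicit cochains of \cite{DeSole2013:var, Bakalov:2019operads}: its cochains are variational polyvector fields, i.e.\ skew $\lambda$-polyderivations of $\OO(\JJ Y)$ valued in the $\del$-coinvariants $\OO(\JJ Y)/\del\OO(\JJ Y)$, with differential given by the Lichnerowicz-type bracket against the bivector determined by the symplectic form $\omega$ on $Y$. Since $Y$ is symplectic, $\omega^\flat$ furnishes an isomorphism of the tangent and cotangent sheaves, and I would lift it to an isomorphism of $\del$-modules identifying variational polyvector fields with variational differential forms and carrying the Poisson differential to the variational de Rham differential $\delta$. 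This is the arc-space counterpart of the classical fact that Poisson and de Rham cohomology agree for a symplectic variety; the content is that the identification respects the $\del$-action and descends to $\del$-coinvariants.

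Next I would compute the variational de Rham cohomology via the variational bicomplex on $\JJ Y$, whose horizontal and vertical differentials are the total-derivative operator $\del$ and the de Rham differential $\delta$, the variational complex being the bottom edge obtained by passing to $\del$-coinvariants. Here the formal disk $\ID$ plays the role of a one-dimensional base, so the algebraic Poincar\'e lemma in the $\del$-direction forces the interior rows of the bicomplex to be acyclic and collapses everything onto the de Rham complex of $Y$ (using $H^\bullet_{\rm dR}(\JJ Y)\cong H^\bullet_{\rm dR}(Y)$ from the contraction $\ev:\JJ Y\to Y$). Because the base is one-dimensional, the edge map shifts de Rham degree by exactly one, giving $H^i_{\rm var}\cong H^{i+1}_{\rm dR}(Y)$ for $i>0$; in degree zero the edge of the bicomplex retains both the $\del$-coinvariant constants (contributing $H^0_{\rm dR}(Y)$) and the surviving first de Rham class (contributing $H^1_{\rm dR}(Y)$), producing $H^0_{\rm var}\cong H^0_{\rm dR}(Y)\oplus H^1_{\rm dR}(Y)$.

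The main obstacle I anticipate is making the formal Poincar\'e lemma and its contracting homotopy rigorous on the infinite-dimensional arc space: one must control completeness and convergence of the homotopy operators on $\OO(\JJ Y)$ (and in the $\hbar$-adic refinement used elsewhere in the paper), and verify that taking $\del$-coinvariants is exact where needed so that the spectral sequence of the bicomplex degenerates as claimed. A secondary subtlety is tracking the edge contributions precisely enough to separate the $i=0$ behaviour—where the interplay between $\del$-coinvariants on functions and the surviving $1$-form class doubles the de Rham contribution—from the uniform degree shift in positive degree; this bookkeeping is exactly what \cite{DeSole2013:var} carries out in the constant-coefficient case, and the generalization to arbitrary affine symplectic $Y$ established in \cite{BN:def} is what must ultimately be invoked.
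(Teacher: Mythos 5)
The paper does not prove this proposition---it is imported verbatim from \cite{BN:def}, which the authors describe as a generalization of \cite{DeSole2013:var}---so there is no in-paper argument to compare against. Your outline is the standard (and, as far as the citation indicates, the intended) strategy: use $\omega^\flat$ to identify variational polyvector fields with variational forms, carrying the Lichnerowicz differential to the variational de Rham differential, then run the variational bicomplex with the algebraic Poincar\'e lemma in the $\del$-direction to collapse onto $H^\bullet_{\rm dR}(Y)$ with the one-dimensional base producing the shift $H^i_{\rm var}\cong H^{i+1}_{\rm dR}(Y)$ for $i>0$ and the extra $H^0_{\rm dR}(Y)$ summand in degree zero coming from constants modulo $\del$. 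The subtleties you flag (convergence of the contracting homotopy on $\OO(\JJ Y)$ and exactness of passing to $\del$-coinvariants) are exactly the points that the external reference must, and presumably does, address.
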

\begin{rem}
We note for future purposes that the zeroth variational Poisson cohomology of $\OO(\JJ Y)$ parameterizes the space of vertex Poisson Casimirs, \ie, all $[a]\in \OO(\JJ Y)/\del \OO(\JJ Y)$ such that
\begin{equation}
	a_{(0)}=0
\end{equation}
as a derivation.
\end{rem}

\subsection{Chiral quantization}

Throughout this subsection, we take $Y$ to be a symplectic variety over $\IC$. Let us introduce the concept of a $\hbar$-adic vertex algebra. Roughly speaking, this is a vertex algebra whose OPEs satisfy locality only up to some power of $\hbar$, \ie, an inverse limit of vertex algebras. We recall the definition in \cite{Li2004:vertexpoisson}:
\begin{dfn}
	A $\hbar$-adic vertex algebra is a tuple $(V,\ket{0},\partial, \{_{(n)}|n\in\IZ\})$ where 
	\begin{enumerate}[(i)]
		\item $V$ is a flat $\ik\Ph$-module, complete in the $\hbar$-adic topology
		\item $\ket{0}\in V$
		\item $\del:V\rightarrow V$ is an endomorphism of $\ik\Ph$-modules;
		\item $ _{(n)}:V\otimes V\rightarrow V$ are $\ik\Ph$-bilinear morphisms
	\end{enumerate}
	such that for each $m\in \IN$, the truncations $(V/\hbar^m V, \ket{0},Y_{\hbar}|_{\hbar^m=0},\partial|_{\hbar^m=0})$ are vertex algebras over $\ik[\hbar]/\hbar^m$.
\end{dfn}

We say that a $\hbar$-adic vertex algebra $(V,\ket{0},Y_\hbar,\partial)$ is \textit{almost commutative} if $(V/\hbar V,\ket{0}, Y_0,\partial)$ is a commutative vertex algebra.
\begin{prop}[\protect{\cite[Proposition 5.6]{Li2004:vertexpoisson}}]
	If $(V,\ket{0},Y_\hbar,\partial)$ is an almost commutative $\hbar$-adic vertex algebra, then the limit $(V/\hbar V)$ has a natural vertex Poisson algebra structure.
\end{prop}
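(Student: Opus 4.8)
The plan is to reproduce, in the chiral setting, the classical-limit construction that produces a Poisson algebra on the associated graded of an almost-commutative filtered associative algebra; here the singular part of the OPE plays the role of the commutator and the $\hbar$-adic order plays the role of the filtration degree. The only genuine content is careful bookkeeping of powers of $\hbar$, and the hypothesis doing all the work is flatness of $V$ over $\ik\Ph$, which makes $V$ torsion-free and so legitimizes dividing by powers of $\hbar$ before reducing mod $\hbar$.

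First I would unwind the almost-commutativity hypothesis: that $(V/\hbar V,\ket{0},Y_0,\partial)$ is a commutative vertex algebra means precisely that every positive product vanishes mod $\hbar$, i.e.\ $a_{(n)}b\in\hbar V$ for all $a,b\in V$ and all $n\ge 0$. Writing $\bar V:=V/\hbar V$ and using torsion-freeness to form leading terms, I would set
$$ \bar a_{(n)}\bar b := \overline{\hbar^{-1}(a_{(n)}b)}\quad (n\ge 0),$$
and take $\bar a_{(-1)}\bar b$ to be the commutative product already present on $\bar V$. Independence of the chosen lifts is immediate: replacing $a$ by $a+\hbar a'$ alters $a_{(n)}b$ by $\hbar\,a'_{(n)}b\in\hbar^2 V$, which vanishes after dividing by $\hbar$ and reducing, and symmetrically in $b$.

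Next I would check the axioms by extracting the leading $\hbar$-order of each $\hbar$-adic vertex algebra axiom. Locality and the commutative algebra in $\ik[\del]$-Mod structure are immediate. In skewsymmetry and sesquilinearity every term carries exactly one power of $\hbar$ (all products occurring have nonnegative index), so dividing by $\hbar$ and reducing reproduces verbatim the corresponding Lie* identities. For weak associativity with $m,k\ge 0$ the left-hand side $x_{(m)}(y_{(k)}z)-y_{(k)}(x_{(m)}z)$ lies in $\hbar^2 V$ (two nested positive products), while on the right the binomial forces $0\le j\le m$, so the inner index $m+k-j\ge k\ge 0$ is also nonnegative and each summand again lies in $\hbar^2 V$; dividing by $\hbar^2$ and reducing yields the Lie* weak associativity.

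The quasi-Leibniz rule is the step where the mechanism is most delicate, and I expect it to be the main obstacle to state cleanly. Applying weak associativity with $x=v$, $y=a$, $z=b$, $m=n\ge 0$ and isolating the index $-1$ contribution (the $j=n$ term $(v_{(n)}a)_{(-1)}b$) from the rest gives
$$ v_{(n)}(a_{(-1)}b)-a_{(-1)}(v_{(n)}b) = (v_{(n)}a)_{(-1)}b + \sum_{j=0}^{n-1}\binom{n}{j}(v_{(j)}a)_{(n-1-j)}b.$$
The three displayed terms each lie in $\hbar V$, whereas every summand of the correction carries two positive products (since $j\ge 0$ and $n-1-j\ge 0$) and hence lies in $\hbar^2 V$. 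Dividing by $\hbar$ and reducing mod $\hbar$ annihilates the correction and leaves exactly $\bar v_{(n)}(\bar a_{(-1)}\bar b)=(\bar v_{(n)}\bar a)_{(-1)}\bar b+\bar a_{(-1)}(\bar v_{(n)}\bar b)$, which is the quasi-Leibniz rule. This completes the verification that $\bar V$ is a vertex Poisson algebra.
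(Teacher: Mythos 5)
Your argument is correct and is the standard one: the paper itself gives no proof, deferring entirely to the cited reference, and your construction (dividing the nonnegative products by $\hbar$, using flatness for well-definedness, and extracting the leading $\hbar$-order of each Borcherds identity) is precisely the argument of that reference. The $\hbar$-bookkeeping in the weak associativity and quasi-Leibniz steps is handled correctly.
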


\begin{exm}
	Let $\hf$ be a Lie algebra and let $V^k(\hf)$ denote the universal affine vertex algebra of $\hf$ at some, possibly trivial, level $k\in\IC$. An important example of a $\hbar$-adic vertex algebra is $V_\hbar^k(\hf)$, the $\hbar$-adic analogue of $V^k(\hf)$. This $\hbar$-adic vertex algebra is generated over $\IC\Ph$ by $\hf$ with OPEs
	\begin{equation}
	x(z)y(w) \sim \frac{\hbar [x,y](w)}{z-w} + \frac{\hbar^2 k\langle x,y\rangle}{(z-w)^2}~.
	\end{equation}
	Note also that $V_\hbar^k(\hf)$ is an almost commutative vertex algebra, and $V_\hbar^k(\hf)/\hbar$ is identified with the vertex Poisson algebra $\OO(\JJ\, \hf^*)$.
\end{exm}

We now define quantizations of the vertex Poisson algebra $\OO(\JJ Y)$ and the sheaf of vertex Poisson algebras $\OO_{\JJ Y}$; we use the term quantization for what we call formal quantizations in \cite{BN:def}.

\begin{dfn}\label{dfn:quantization}
	 A \emph{quantization} of $\OO_{\JJ Y}$ is a pair $(\AA,\varphi)$, where
	\begin{enumerate}[(i)]
	\item $\AA$ is a sheaf of almost commutative $\hbar$-adic vertex algebras on $\JJ Y$
	\item $\varphi: \AA/\hbar \AA \xrightarrow{\sim} \OO_{\JJ Y}$ is an isomorphism of sheaves of vertex Poisson algebras
	\end{enumerate}
	Similarly, a \emph{quantization} of the vertex Poisson algebra $\OO(\JJ Y)$ is a pair $(A,\psi)$, where 
	\begin{enumerate}[(i)]
		\item $A$ is an almost commutative $\hbar$-adic vertex algebra
		\item $\varphi: A/\hbar A\xrightarrow{\sim} \OO(\JJ Y)$ is an isomorphism of vertex Poisson algebras.
	\end{enumerate}
\end{dfn}
Note that the global sections of a quantization of $\OO_{\JJ Y}$ give rise to a quantization of $\OO(\JJ Y)$. Like in the associative setting, we also have the notion of a graded quantization.

\begin{dfn}
 Suppose $Y$ has an action of $\IG_m$ that scales the symplectic form with unit weight. We use $\IG_\hbar$ for this distinguished $\IG_m$ action. This lifts to an action of $\IG_\hbar(\OO)$ on $\JJ Y$ but we restrict to the action of the constant arcs $\IG_\hbar\subset \IG_\hbar(\OO)$ and moreover consider the diagonal action under which $\IG_\hbar$ acts also on the formal disc by scaling $\hbar$ with unit weight.

 A \emph{graded} quantization of $\OO_{\JJ Y}$ is a pair $(\AA,\varphi)$ as above together with an $\IG_\hbar$-equivariant structure on the sheaf $\AA$ of $\hbar$-adic vertex algebras such that $\IG_\hbar$ scales $\hbar$ with weight 1 and $\varphi$ is $\IG_\hbar$-equivariant.
 
Similarly, a graded quantization of $\OO(\JJ Y)$ is a pair $(A,\varphi)$ as above together with an action of $\IG_\hbar$ on $A$ by $\hbar$-adic vertex algebra automorphisms such that $\IG_\hbar$ scales $\hbar$ with weight 1 and $\varphi$ is $\IG_\hbar$-equivariant.
\end{dfn}

\begin{thm}[\cite{BN:def}]\label{thm:space of chiral quantization}
	Suppose $Y$ is an affine, symplectic variety such that $\OO(\JJ Y)$ admits a graded quantization, and $H^3_{\rm dR}(Y)$ is concentrated in a single degree with respect to $\bb G_\hbar$-weight. Then the moduli space of graded quantizations of $\OO(\JJ Y)$, up to isomorphism, is a torsor for $H^3_{\rm dR}(Y)$.
\end{thm}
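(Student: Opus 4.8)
The plan is to recognize the classification of graded quantizations as a graded deformation problem and to read off the torsor group from the variational Poisson cohomology computed in Proposition~\ref{thm:BN def variational}; since that proposition gives $H^2_{\rm var}(\OO(\JJ Y)) \cong H^3_{\rm dR}(Y)$, the target group is forced to be $H^3_{\rm dR}(Y)$, and the whole argument amounts to showing that the $\IG_\hbar$-grading together with the single-weight hypothesis collapses the general deformation-theoretic classification onto a single copy of this group.

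First I would set up the controlling cochain complex. Using the operadic description of vertex and vertex Poisson algebras and the resulting deformation complexes from \cite{BN:def}, a quantization of $\OO(\JJ Y)$ is a solution of the Maurer--Cartan equation in $C^\bullet_{\rm var}(\OO(\JJ Y))\Ph$ deforming the classical vertex Poisson structure, and an isomorphism of quantizations reducing to the identity modulo $\hbar$ is a gauge equivalence of such solutions. The cohomology of this complex is $H^\bullet_{\rm var}(\OO(\JJ Y))$, so that infinitesimal deformations are classified by $H^2_{\rm var}$, infinitesimal automorphisms by $H^1_{\rm var}$, and the obstructions to extending a quantization one order further in $\hbar$ lie in $H^3_{\rm var}$. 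In the ungraded setting the basic output of this machinery is that, once a quantization exists, the set of isomorphism classes is a torsor for $\hbar\, H^2_{\rm var}(\OO(\JJ Y))\Ph$ via the characteristic (period) class.

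Next I would incorporate the $\IG_\hbar$-grading. Because $\hbar$ carries weight one and the trivialization $\varphi$ is required to be $\IG_\hbar$-equivariant, the coefficient of $\hbar^m$ in any graded Maurer--Cartan element, and hence in its characteristic class, must lie in the weight-$m$ graded component of $H^2_{\rm var}$. Thus the classifying series of a graded quantization lives in $\bigoplus_{m\ge 1}\hbar^m\,(H^2_{\rm var})_m$. Invoking the hypothesis that $H^3_{\rm dR}(Y)\cong H^2_{\rm var}(\OO(\JJ Y))$ is concentrated in a single $\IG_\hbar$-weight $w$, every component $(H^2_{\rm var})_m$ vanishes except at $m=w$, so the entire classifying datum is a single element of $H^3_{\rm dR}(Y)$ placed at order $\hbar^w$. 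The same concentration makes freeness transparent: comparing two graded quantizations order by order against a fixed basepoint, the leading discrepancy at order $\hbar^m$ is a cocycle whose class lies in $(H^2_{\rm var})_m$, and since this vanishes for $m\neq w$ the two are forced to be isomorphic once their order-$\hbar^w$ classes agree. Transitivity is the assertion that every class in $(H^2_{\rm var})_w$ is realized by a genuine graded quantization, which is exactly the surjectivity of the period class supplied by the general theory of \cite{BN:def}, while nonemptiness of the torsor is the standing existence hypothesis.

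The step I expect to be the main obstacle is precisely this realization statement, that is, the unobstructedness underlying the bijectivity of the period map: one must show that a first-order deformation class integrates to an all-orders quantization despite the a priori obstructions in $H^3_{\rm var}(\OO(\JJ Y)) \cong H^4_{\rm dR}(Y)$, whose weights are \emph{not} controlled by the hypothesis. This is the vertex-algebraic analogue of the formality results governing quantizations of symplectic varieties, and it is the genuinely hard input carried by \cite{BN:def}; by contrast, the grading bookkeeping and the freeness argument are formal and parallel the finite-dimensional classification established in \cite{BN:finite}.
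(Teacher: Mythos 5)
This theorem is stated in the paper with the attribution \cite{BN:def} and no proof is given here --- it is imported wholesale from the companion work on deformations of quantizations of arc spaces --- so there is no argument in this paper against which to compare your proposal line by line. What can be checked is whether your reconstruction is consistent with the framework the paper does set up, and it is: Proposition~\ref{thm:BN def variational} gives $H^2_{\rm var}(\OO(\JJ Y))\cong H^3_{\rm dR}(Y)$ for $Y$ affine symplectic, so the torsor group you extract is the right one, and your identification of first-order deformations with $H^2_{\rm var}$, infinitesimal automorphisms with $H^1_{\rm var}$, and obstructions with $H^3_{\rm var}\cong H^4_{\rm dR}(Y)$ is the standard reading of the operadic deformation complex the paper alludes to. Your use of the $\IG_\hbar$-grading to force the classifying series into the single weight-$w$ component, and the resulting order-by-order freeness argument, is exactly the role the single-degree hypothesis is designed to play.

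The one place where your write-up should be more careful is the claim that, in the ungraded setting, ``the set of isomorphism classes is a torsor for $\hbar\, H^2_{\rm var}(\OO(\JJ Y))\Ph$'': that statement already presupposes unobstructedness at every order, which is precisely the content you later (correctly) isolate as the hard input. As you note yourself, the weight hypothesis controls only $H^3_{\rm dR}(Y)$ and says nothing about $H^4_{\rm dR}(Y)$, so the vanishing or integrability of obstructions cannot be deduced from the grading alone and must be supplied by a Fedosov- or formality-type argument in \cite{BN:def}. Your proposal is therefore an honest and structurally correct skeleton that correctly names its missing load-bearing step, rather than a complete proof; since the paper itself defers that same step to the same reference, this is as much as can reasonably be asked without access to \cite{BN:def}.
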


Henceforth, unless specified, we wil use quantization to mean graded quantization.

Suppose that $Y$ has a Hamiltonian action of an algebraic group $H$, with moment map $\Phi: H\rightarrow \hf^*$. As we have remarked, $\OO(\JJ Y)$ has a comoment map $\Phi_\infty: \OO(\JJ \hf^*)\rightarrow \OO(\JJ Y)$ which integrates to an action of $H(\OO)$.

We define a weakly $H(\OO)$-equivariant structure on a quantization $A$ of $\OO(\JJ Y)$ as a map $\rho:H(\OO) \to \textup{Aut}(A)$ such that the map $\varphi$ is $H(\OO)$-equivariant with respect to the induced $H(\OO)$-action on $A/\hbar A$. We define a weakly $H(\OO)$-equivariant quantization $A$ as a quantization together with a choice of weakly $H(\OO)$-equivariant structure.

Similarly, we define a weakly $H(\OO)$-equivariant structure on a quantization $\AA$ of $\OO_{\JJ Y}$ as a map $\rho:H(\OO) \to\textup{Aut}(\AA)$ such that the map $\varphi$ is $H(\OO)$-equivariant with respect to the induced $H(\OO)$-action on $\AA/\hbar \AA$. is weakly $H(\OO)$-equivariant if $\AA$ is a $H(\OO)$-equivariant sheaf and the map $\varphi$ is $H(\OO)$-equivariant.  We define a weakly $H(\OO)$-equivariant quantization $\AA$ as a quantization together with a choice of weakly $H(\OO)$-equivariant structure.

\begin{prop}[\cite{BN:def}]	
Suppose $\OO(\JJ Y)$ admits a weakly $H(\OO)$-equivariant quantization, and that $H^3_{\rm dR}(Y)^{H-inv}$, the cohomology of $H$-invariant forms, is concentrated in a single degree with respect to $\bb G_\hbar$-weight. Then the moduli space of graded, weakly $H(\OO)$-equivariant quantizations is a torsor for $H^3_{\rm dR}(Y)^{H-inv}$.
\end{prop}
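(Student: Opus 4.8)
The plan is to run the proof of Theorem \ref{thm:space of chiral quantization} again, now keeping track of the $H(\OO)$-symmetry throughout the deformation-theoretic argument. Recall the shape of that argument: graded quantizations of $\OO(\JJ Y)$ are controlled order by order in $\hbar$ by the variational Poisson cohomology of $\OO(\JJ Y)$, with first-order deformations classified by $H^2_{\rm var}$ and obstructions to extending across a given order living in $H^3_{\rm var}$; the $\IG_\hbar$-weight hypothesis then rigidifies this order-by-order problem and exhibits the moduli space as a torsor for $H^2_{\rm var}(\OO(\JJ Y)) \cong H^3_{\rm dR}(Y)$, using Proposition \ref{thm:BN def variational}. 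For the equivariant statement I would impose weak $H(\OO)$-equivariance as a symmetry constraint on every cochain in this complex and re-examine each step.

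The first and most delicate step is to show that, at the level of the controlling cohomology, weak $H(\OO)$-equivariance reduces to invariance under the finite-dimensional group $H$ of constant loops. Write $H(\OO) = H\ltimes H_+(\OO)$, with $H_+(\OO)$ the pro-unipotent subgroup of arcs trivial at the closed point of $\ID$. The action of $H(\OO)$ on $\OO(\JJ Y)$ is Hamiltonian, realized through the modes of the currents in the image of the comoment map $\Phi_\infty : \OO(\JJ\hf^*)\to \OO(\JJ Y)$; the positive modes generate the $H_+(\OO)$-action and are inner, so that $H_+(\OO)$-equivariance imposes no genuine constraint on cohomology classes and the residual constant-loop action of $H$ is the only essential symmetry. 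Granting this, the weakly $H(\OO)$-equivariant graded quantizations are governed by the $H$-invariant subcomplex of the variational Poisson complex.

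The second step is to compute the cohomology of this invariant subcomplex. The identification of Proposition \ref{thm:BN def variational} is manifestly $H$-equivariant, since $H$ acts compatibly on $Y$, on $\JJ Y$, on $\OO(\JJ Y)$ with its vertex Poisson structure, and on the de Rham complex of $Y$, and the quasi-isomorphism intertwines these actions. Restricting to $H$-invariant subcomplexes---which computes the stated cohomology of $H$-invariant forms, and is exact when $H$ is reductive---I would obtain
\begin{equation}
	H^i_{\rm var}(\OO(\JJ Y))^{H-inv} \cong H^{i+1}_{\rm dR}(Y)^{H-inv} \qquad (i>0)~.
\end{equation}
In particular first-order weakly $H(\OO)$-equivariant deformations are parameterized by $H^3_{\rm dR}(Y)^{H-inv}$, with obstructions in $H^4_{\rm dR}(Y)^{H-inv}$.

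Finally, I would re-run the $\IG_\hbar$-weight argument of Theorem \ref{thm:space of chiral quantization} verbatim inside the $H$-invariant subcomplex. Since $\hbar$ has weight one and $H^3_{\rm dR}(Y)^{H-inv}$ is concentrated in a single $\IG_\hbar$-weight by hypothesis, the same rigidity that makes the non-equivariant moduli a torsor applies here: the graded, order-by-order extension problem is controlled entirely at the linear level, and the difference of any two weakly $H(\OO)$-equivariant graded quantizations determines a well-defined class in $H^3_{\rm dR}(Y)^{H-inv}$ that acts freely and transitively. This yields the claimed torsor structure. I expect the main obstacle to be the first step: verifying carefully that the $H_+(\OO)$-part of the action is genuinely inner and gauge-trivial compatibly with the $\hbar$-adic filtration, so that the passage to the finite-dimensional $H$-invariant subcomplex is consistent with both the $\IG_\hbar$-grading and the order-by-order obstruction analysis.
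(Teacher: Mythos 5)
The paper does not actually prove this proposition: it is quoted verbatim from \cite{BN:def} and used as a black box, so there is no in-paper argument to compare yours against. Judged against the framework the paper does set up (Theorem \ref{thm:space of chiral quantization} and Proposition \ref{thm:BN def variational}), your plan --- rerun the order-by-order deformation argument inside the $H(\OO)$-invariant subcomplex of the variational Poisson complex, identify its $H^2$ with the degree-three cohomology of $H$-invariant forms on $Y$, and then invoke the $\IG_\hbar$-weight rigidity --- is the natural equivariant refinement and is surely close to what \cite{BN:def} does.

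There is, however, a real tension in your first step that you should resolve before calling this a proof. You argue that $H_+(\OO)$-equivariance is no constraint because the positive modes of the comoment currents act by inner (Hamiltonian) derivations, hence trivially on variational Poisson cohomology. But exactly the same argument applies to the zero modes when $H$ is connected: the constant-loop action is also Hamiltonian, so it too acts trivially on $H^\bullet_{\rm var}$, and your reasoning would then collapse the answer to $H^3_{\rm dR}(Y)$ rather than $H^3_{\rm dR}(Y)^{H\textup{-inv}}$. The point you are missing is that a weakly equivariant quantization is a quantization \emph{together with} a choice of equivariant structure, so the moduli problem is controlled by the cohomology of the invariant \emph{subcomplex} (equivalently, the complex computing the cohomology of $H$-invariant forms), not by the $H$-invariants of the cohomology; these agree only when $H$ is reductive, and the proposition is stated for general $H$ precisely because the distinction matters. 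Your parenthetical ``is exact when $H$ is reductive'' signals that you are conflating the two. To close the gap, you should set up the deformation complex as the invariant subcomplex from the start, check that Proposition \ref{thm:BN def variational} restricts to a quasi-isomorphism between invariant subcomplexes (this is where the actual work is), and only then read off $H^3_{\rm dR}(Y)^{H\textup{-inv}}$; the $\IG_\hbar$-weight argument in your last step then goes through as you describe.
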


A weakly equivariant quantization $A$, of $\OO(\JJ Y)$, is strongly $H(\OO)$-equivariant if it admits a chiral comoment map $\Phi:V_\hbar^k(\hf)\rightarrow A$, a map of $\hbar$-adic vertex algebras such that the action of $U(\hf[\![t]\!])$ integrates to the action of $H(\OO)$. Moreover, we require that $\varphi$ must intertwine   $\Phi_{\hbar=0}$ and the classical comoment map. A weakly equivariant quantization $\AA$ of $\OO_{\JJ Y}$ is strongly equivariant if its global sections are.

We remark that it is not true that a Hamiltonian action of $H(\OO)$ lifts to a strong action on some given quantization of $\OO(\JJ Y)$. Nevertheless, in \cite{BN:def}, we give sufficient conditions for such a lift to exist.

\begin{prop}[\cite{BN:def}]\label{prop:lifting strong actions}
	Suppose the following criteria are met:
	\begin{enumerate}[(i)]
		\item $H$ is unipotent and abelian
		\item the action of $H(\OO)$ is free
		\item the moment map $\Phi_\infty: \JJ Y\rightarrow \JJ \hf^*$ is smooth
	\end{enumerate}
	Then, any graded quantization of $\OO(\JJ Y)$ can be lifted to a strongly $H(\OO)$-equivariant graded quantization. Moreover, such a lift is unique up to strongly $H(\OO)$-equivariant isomorphism.
\end{prop}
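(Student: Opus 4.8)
The plan is to reduce the statement to the construction and rigidification of a single chiral comoment map, and then to run an order-by-order deformation argument in which the two geometric hypotheses are used precisely to kill the relevant obstruction and ambiguity groups. Since $H$ is abelian and unipotent, $\hf$ is an abelian Lie algebra and $V^k_\hbar(\hf)$ is strongly generated by $\hf$ with the elementary OPEs $x_{(0)}y=0$, $x_{(1)}y=\hbar^2 k\langle x,y\rangle\ket{0}$, and $x_{(n)}y=0$ for $n\geq 2$. By the universal property, producing a chiral comoment map $\Phi\colon V^k_\hbar(\hf)\to A$ is therefore the same as producing elements $\hat\Phi_x:=\Phi(x)\in A$, linear in $x\in\hf$, satisfying these three relations and with classical limit $\varphi(\hat\Phi_x\bmod\hbar)$ equal to the image $\Phi_\infty(x)$ of $x\in\hf\subset\OO(\JJ\hf^*)$ under the classical comoment map. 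Moreover, once such elements are found the modes $\hat\Phi_{x,(n)}$, $n\geq 0$, act on $A$ by derivations lifting the infinitesimal $\hf(\OO)$-action; because $H$ is unipotent these derivations are locally nilpotent modulo every power of $\hbar$, so they exponentiate to a prounipotent action of $H(\OO)$ by $\hbar$-adic vertex-algebra automorphisms, supplying the weak equivariant structure and the required integration. Thus strong equivariance is equivalent to existence of the lifts $\hat\Phi_x$, and a strongly equivariant isomorphism is a conjugation intertwining two such choices.

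The engine of the argument is a cohomology-vanishing statement. Writing $X=\JJ Y$ with its $H(\OO)$-action, I would consider the Chevalley--Eilenberg complex computing $H^\bullet(\hf(\OO);\OO(X))$, which is a relative version of the variational-Poisson complex of Proposition \ref{thm:BN def variational}. Since $\hf$ is abelian, this complex is exactly the Koszul complex on the family of commuting derivations by which $\hf(\OO)$ acts. Hypothesis (ii), freeness of the $H(\OO)$-action, together with hypothesis (iii), smoothness of $\Phi_\infty$ (which makes these derivations part of a regular system transverse to $\JJ\hf^*$), forces this Koszul complex to be a resolution, so that $H^0(\hf(\OO);\OO(X))=\OO(X)^{H(\OO)}$ while $H^{>0}(\hf(\OO);\OO(X))=0$. (Freeness of the $\IG_a^r$-action on the affine $X$ also trivializes the torsor $X\to X/H(\OO)$, the coordinate manifestation of the same vanishing.)

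With this in hand I would build the $\hat\Phi_x$ by induction on $\hbar$-order. Given lifts valid modulo $\hbar^{m+1}$, choose arbitrary lifts modulo $\hbar^{m+2}$ (possible by flatness of $A$ over $\ik\Ph$); the failure of the OPE and comoment relations is then an $\hbar^{m+1}$-multiple of a Chevalley--Eilenberg $2$-cocycle valued in $\OO(X)$, and modifying the lift by $\hbar^{m+1}$-terms changes it by a coboundary. Vanishing of $H^2(\hf(\OO);\OO(X))$ makes the obstruction exact, so the lift extends---this is existence. The $\IG_\hbar$-grading controls weights throughout and forces the second-order term $\hat\Phi_{x,(1)}\hat\Phi_y$, an $\hf(\OO)$-invariant of the correct weight, to be a scalar multiple of $\ket{0}$, which is precisely what defines the level $k$ of the target. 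For uniqueness, two lifts of the same classical comoment map differ at leading order by a Chevalley--Eilenberg $1$-cocycle; vanishing of $H^1(\hf(\OO);\OO(X))$ exhibits it as $x\mapsto\hat\Phi_{x,(0)}c$ for some $c$, i.e.\ as the infinitesimal action of the inner automorphism $\exp(\hbar^m c_{(0)})$, so the two choices can be conjugated into agreement order by order, with unipotence guaranteeing convergence of the resulting automorphism.

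I expect the main obstacle to be the cohomology-vanishing step itself: rigorously identifying the obstruction and ambiguity with classes in $H^2$ and $H^1$ of the arc-space Chevalley--Eilenberg (relative variational-Poisson) complex, and proving that freeness and smoothness make this complex a genuine resolution in the $\hbar$-adic, infinite-jet setting, together with the bookkeeping that pins the residual central term to a constant level rather than to a nonconstant invariant. Everything else---integrability and conjugation---is then formal, courtesy of unipotence.
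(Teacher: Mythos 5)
The paper does not actually prove this proposition: it is imported verbatim from \cite{BN:def} (note the citation in the header and the surrounding remark that sufficient conditions for such a lift are given in \emph{loc.\ cit.}), so there is no in-paper argument to compare yours against. Judged on its own terms, your reduction of strong equivariance to an order-by-order construction of the elements $\hat\Phi_x$ is the right formulation, and the overall shape --- obstructions in degree $2$, ambiguities in degree $1$, both killed by freeness of the $H(\OO)$-action and smoothness of $\Phi_\infty$, with the level $k$ surviving as the residual central term --- is the standard deformation-theoretic pattern and is consistent with the framework the paper builds around Proposition \ref{thm:BN def variational}.

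The genuine gap is the one you half-name yourself: you run the obstruction calculus in the plain Chevalley--Eilenberg/Koszul complex of $\hf(\OO)$ acting on $\OO(\JJ Y)$, but the deformation problem for a \emph{chiral} comoment map lives in a variational complex. Concretely: (a) modifying $\hat\Phi_x$ by a total derivative $\del c$ does not change the derivation $\hat\Phi_{x,(0)}$, so the ambiguity group is built from $\OO(\JJ Y)/\del\,\OO(\JJ Y)$ rather than $\OO(\JJ Y)$; (b) the failure of the relation $[\hat\Phi_x\,{}_\lambda\,\hat\Phi_y]=\hbar^2k\langle x,y\rangle\lambda$ at order $\hbar^{m+1}$ is a polynomial in $\lambda$ with coefficients in $\OO(\JJ Y)$ subject to sesquilinearity and skew-symmetry, i.e.\ a cocycle in the variational Chevalley--Eilenberg complex, and it is the $H^1$ and $H^2$ of \emph{that} complex which must vanish. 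Your Koszul-resolution argument plausibly gives vanishing for the naive complex, but you still need the comparison with the variational one; already $H^0_{\rm var}$ is $H^0_{\rm dR}\oplus H^1_{\rm dR}$ rather than $H^0_{\rm dR}$ in Proposition \ref{thm:BN def variational}, which is precisely the $\del$-ambiguity your bookkeeping suppresses. Relatedly, in the uniqueness step $\exp(\hbar^m c_{(0)})$ changes $\hat\Phi_x$ by $c_{(0)}\hat\Phi_x=-\hat\Phi_{x,(0)}c+\del(\cdots)$, so matching two comoment maps on the nose (not merely modulo total derivatives) requires controlling those extra terms; vanishing of the plain $H^1$ does not directly deliver this. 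None of this looks fatal --- it is presumably what \cite{BN:def} actually carries out --- but as written the central cohomological step is performed in the wrong complex.
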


We say that a sheaf of almost commutative $\hbar$-adic vertex algebras $\AA$ on $\JJ Y$ is a strict chiral quantization of $Y$ if it is a quantization of $\OO_{\JJ Y}$. In the literature, strict chiral quantization has a slightly different definition. Namely, a strict chiral quantization of $\OO(Y)$ is a vertex algebra (not $\hbar$-adic) $V$ such that ${\rm gr}\, V$, with respect to the Li filtration, is isomorphic to $\OO(\JJ Y)$, as a vertex Poisson algebra.

\subsection{Equivariant Slodowy slices}\label{ssec:Lieprelim}

The spaces we wish to quantize will be the arc spaces of equivariant Slodowy slices. We provide a brief review of the equivariant slices here, mainly to introduce future notation. A more in depth review can be found in \cite{Losev2007:quant}.

Recall that nilpotent coadjoint orbits in $\gf^*\equiv \mathfrak{gl}_N$ are in bijection with the partitions of $N$. Let $\mu$ be such a partition, as above, and let $\chi_\mu=\langle f_\mu , \cdot \rangle$ be an element of the nilpotent coadjoint orbit, $\IO_\mu\subset \gf^*$. 

The orbit $\IO_\mu$ admits a transversal slice $S_\mu$ at $\chi_\mu$---see, \eg, 3.2.19 in \cite{Chriss2010}. There are many such transversal slices at $\chi_\mu$, but a particularly nice and familiar class are the Slodowy slices, which have the following construction. By the Jacobson--Morozov theorem, we may complete $f\equiv f_\mu$ to an $\slf_2$-triple $(h,e,f)$, and the Slodowy slice $S_\mu$ is defined as the image of the affine linear subspace
\begin{equation}
f + \ker\, {\rm ad}_e
\end{equation}
under the Killing isomorphism. Note that $\ker \ad_e$ is always transverse to $\im\, \ad_f=  T_{f} \IO_f$, so that $S_\mu$ defines a transverse slice. Changing the choice of $\slf_2$-triple gives isomorphic slices, related by $G$-conjugation.

Observe that if $\chi=0\in \IO_{[1^N]}$, the $\slf_2$ triple is $(0,0,0)$ and $S_{[1^N]}\cong \gf^*$. Alternatively, if $\chi\in \IO_{[N]}$, \ie, is regular, then $S_{\mu}$ is called the \textit{principal} Slodowy slice and $S_{[1^N]}\cong \mf h/W$.

The transversal slices $S_\mu$ are Poisson, which is more apparent in their definition via Hamiltonian reduction. Given an $\slf_2$-triple, $(h,e,f)$, the operator ${\rm ad}_h$ gives a grading on $\gf$---which we assume is even without loss of generality in type $A$. We let $\gf_{>0}$ denote the Lie subalgebra that is strictly positively graded under ${\rm ad}_h$ and define 
\begin{equation}
	N_\chi \coloneqq {\rm Exp}\, \gf_{>0}~,
\end{equation}
the corresponding unipotent Lie group. Then the coadjoint action of $G$ restricts to a Hamiltonian $N_\chi$-action on $\gf^*$ with moment map  $\Phi_{N_{\chi_\mu}}:\gf^* \to \gf_{>0}^*$ the projection, and we have
\begin{equation}
\begin{split}
	N_\chi \times S_\mu &\xrightarrow{\sim} \{\chi\} + \gf_{\ge0}^* = \mu_{N_\chi}^{-1}(\{\chi\}) \\
	(n,s) &\mapsto {\rm Ad}^*_n s
\end{split} \ ,
\end{equation}
the Gan--Ginzburg isomorphism \cite{GG}. Therefore, we see that
\begin{equation}
	S_\mu \cong \gf^*\red{\chi}N_\chi 
\end{equation}
and thus transverse slice $S_\mu$ inherits the Poisson structure induced by Hamiltonian reduction from the Kirillov--Kostant--Souriau Poisson structure on $\gf^*$.

The action of ${\rm ad}_h$ descends to the transverse slice, giving rise to the Kazhdan grading. On an element $s\in S_\mu$, the action of $t\in\IG_m$ is given by
\begin{equation}\label{eq:Kazhdan}
t\cdot s = t^{1-\tfrac12 \rm{ad}_h} s
\end{equation}
Note that the action, contracts to the fixed point $\chi\in S_\mu$. This action of $\IG_m$ scales the Poisson bracket with weight $-1$. 

Much of our deformation theory makes use of the fact that the underlying space is symplectic and so we want to replace these Poisson transversal slices with a symplectic version. We do so by considering the equivariant Slodowy slices of \cite{Losev2007:quant}.

Viewing $T^*G$ in its \emph{right} trivialization, we have  $T^*G\cong \gf^*\times G$, with left and right moment maps $\Phi_L:\gf^*\times G$ and $\Phi_R:\gf^*\times G \rightarrow G$, where 
\begin{equation}
	\Phi_L(x,g)= x~, \quad \Phi_R(x,g) = {\rm Ad}^*_g x~,
\end{equation}
for $(x,g)\in \gf^*\times G$.

More generally, for any $\chi_\mu\in \IO_\mu$, let $N_{\chi_\mu}$ be as before. The equivariant Slodowy slice is the Hamiltonian reduction
\begin{equation}
	S_{G,\mu}\coloneqq T^*G\red{\chi_\mu} N_{\chi_\mu} ~,
\end{equation}
using the left moment map. Since the left moment map is just projection to $\gf^*$, we see that
\begin{equation}
	S_{G,\mu}\cong G\times S_\mu~.
\end{equation}
hence the name, equivariant Slodowy slice. Since the moment map is flat and its fibers are $N_\chi$-torsors, each reduction is smooth and inherits a symplectic structure from $T^*G$.

Furthermore, since we are in the right trivialization, the right action of $G$ on itself leaves the cotangent fibers invariants. Therefore, $G$ acts freely on the right on each $S_{G,\mu}$ and
\begin{equation}
	S_{G,\mu}/G \cong S_{\mu}~.
\end{equation}

Note that $T^*G$ has a natural action of $\IG_m$ that scales the fibers with weight $1$. The reductions $S_{G,\mu}$ inherit a modified version of this $\IG_m$-action, via the Kazhdan grading---which extends straightforwardly to $\gf^*\times G$. Indeed, this is the distinguished $\IG_\hbar$-action from the preceding section.

\subsection{Inverse Hamiltonian reduction for finite W-algebras}\label{ssec:finite inv ham red}
We end with reviewing some of the main geometric results of \cite{BN:finite}. Let $\mu$ be a partition of $N$, of length $n$---thought of as a dominant coweight of $\PGL_n$. Let $\alpha$ be a positive coroot of $\PGL_n$ such that $\mu+\alpha$ is a dominant coweight, \ie, remains a partition of $N$.

We write $\IG_\alpha$ for the group of translations on $\IA^{{\rm len}\, \alpha}$ and write $\Gf_\alpha\equiv {\rm Lie}\, \IG_\alpha$. Let $\chi_\alpha\in \Gf_\alpha^*$ denote the character $(v_1,\dots,v_{{\rm len}\,\alpha})\mapsto v_1$. Moreover, write $\open{\Gf}\vphantom{\open{\Gf}}^*_{\alpha} = \{ \chi\in \Gf^*\,|\, (1,0\dots,0)\notin {\rm ker}\,\chi \}$.

We identify $\open{T^*}\IG_\alpha\equiv\open{\Gf}\vphantom{\open{\Gf}}^*_\alpha\times \IG_\alpha$ with an open subset of $T^*\IG_\alpha$, thereby giving it the structure of a symplectic variety with a natural Hamiltonian action of $\IG_\alpha$.

As a variety $\open{T^*}\IG_\alpha\cong \IA^{2 {\rm len}\,\alpha-1}\times \IA^\times\cong {\rm Spec}\, \IC[\beta_i,\gamma_i,p,e,e^{-1}\,|\,i=1,\dots,{\rm len}\,\alpha-1]$ and the symplectic form is 
\begin{equation}
	\omega = \frac{1}{e}dp \wedge de + \sum_{i=1}^{\rm len\, \alpha - 1} d\beta_i\wedge d\gamma_i
\end{equation}
The moment map for the Hamiltonian $\IG_\alpha$ action, in these co-ordinates, is given by the projection to the $\beta_i,e$ co-ordinates.

\begin{thm}[\cite{BN:finite}]\label{thm:BN finite main}
The equivariant Slodowy slice $S_{G,\mu}$ has a Hamiltonian action of $\IG_\alpha$ with moment map $\Phi_\alpha:S_{G,\mu}\rightarrow \Gf_\alpha$ such that 
\begin{equation}
	S_{G,\mu+\alpha} \cong S_{G,\mu}\red{(1,\dots,0)}\IG_\alpha~.
\end{equation}
Moreover, this reduction admits an inverse, in the sense that we have a Hamiltonian $\IG_\alpha\times G$-equivariant embedding of symplectic varieties,
\begin{equation}
	{\bf m}: \open{T^*}\IG_\alpha \times S_{G,\mu+\alpha} \rightarrow S_{G,\mu}
\end{equation}
where the action of $\IG_\alpha$ on the left is the natural action on $\open{T^*} \IG_\alpha$ and the $G$ action is the free, right $G$ action on $S_{G,\mu+\alpha}$ and $S_{G,\mu}$.
\end{thm}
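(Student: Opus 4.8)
The plan is to work throughout in the explicit type A matrix model, where the even Kazhdan grading renders the nilradicals $\gf_{>0}$ and the slices $S_\mu = f_\mu + \ker\ad_{e_\mu}$ completely transparent. The first and most important preliminary is to fix \emph{compatible} $\slf_2$-triples $(h_\mu,e_\mu,f_\mu)$ and $(h_{\mu+\alpha},e_{\mu+\alpha},f_{\mu+\alpha})$ for the two partitions. Since $\mu+\alpha>\mu$ in the dominance order, adding the positive coroot $\alpha=\epsilon_i-\epsilon_j$ lengthens one Jordan block and shortens another, and I would arrange the triples so that $f_\mu$ and $f_{\mu+\alpha}$ differ only in a single explicitly controlled family of matrix entries indexed by the ${\rm len}\,\alpha$ steps between rows $i$ and $j$. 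These entries are exactly the directions that will carry the $\open{T^*}\IG_\alpha$ factor, and identifying them cleanly is what makes the rest of the argument concrete.

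Next I would construct the Hamiltonian $\IG_\alpha$-action on $S_{G,\mu}$. Here $\IG_\alpha$ is realized as a ${\rm len}\,\alpha$-dimensional abelian unipotent subgroup built from the positively graded gauge directions that distinguish the larger nilpotent from the smaller; it acts on $S_{G,\mu}\cong G\times S_\mu$ through the residual gauge freedom on the $S_\mu$ factor, with moment map $\Phi_\alpha\colon S_{G,\mu}\to\Gf_\alpha^*$ reading off the corresponding components of the slice coordinate. A dimension count is reassuring: $\dim S_{G,\mu}-\dim S_{G,\mu+\alpha}=2\,{\rm len}\,\alpha = 2\dim\IG_\alpha$, so the reduction drops exactly the right number of dimensions. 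Since this action touches only the $S_\mu$ factor, it commutes with the free right $G$-action, so $G$-equivariance is automatic.

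Then I would establish the reduction identity by reduction in stages. Writing $S_{G,\mu}\red{\chi_\alpha}\IG_\alpha=(T^*G\red{\chi_\mu}N_{\chi_\mu})\red{\chi_\alpha}\IG_\alpha$, the content is to verify that performing these two reductions in succession coincides with the single Whittaker reduction $T^*G\red{\chi_{\mu+\alpha}}N_{\chi_{\mu+\alpha}}=S_{G,\mu+\alpha}$. Concretely this means checking that $N_{\chi_\mu}$ together with the lift of $\IG_\alpha$ generates $N_{\chi_{\mu+\alpha}}$, and that the characters compose correctly, i.e.\ that $\chi_\mu$ extended by $\chi_\alpha$ is $G$-conjugate to $\chi_{\mu+\alpha}$. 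The compatible choice of triples in the first step is precisely what makes this matching manifest.

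Finally, the inverse embedding ${\bf m}$ is obtained by restricting to the open locus of $S_{G,\mu}$ where the distinguished component $e$ of $\Phi_\alpha$ is invertible. On this locus the free abelian $\IG_\alpha$-action admits a global Darboux-type section, so the standard splitting for a free Hamiltonian action of an abelian group identifies the locus symplectically with $\open{T^*}\IG_\alpha\times\bigl(S_{G,\mu}\red{\chi_\alpha}\IG_\alpha\bigr)=\open{T^*}\IG_\alpha\times S_{G,\mu+\alpha}$; I take ${\bf m}$ to be the inverse of this identification. Because domain and target have equal dimension, ${\bf m}$ is automatically an \emph{open} symplectic embedding, and it is $\IG_\alpha\times G$-equivariant by construction. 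I expect the main obstacle to lie in the reduction-in-stages matching for coroots with ${\rm len}\,\alpha>1$: tracking the intermediate graded pieces, confirming the character composition, and checking that the Darboux section producing ${\bf m}$ extends to a regular map over the whole $e\neq 0$ locus rather than merely a formal one. The second delicate point is verifying that the resulting coordinates reproduce exactly the symplectic form $\omega=\tfrac1e\,dp\wedge de+\sum_i d\beta_i\wedge d\gamma_i$, so that ${\bf m}$ is genuinely a symplectomorphism onto its image and not merely a Poisson map.
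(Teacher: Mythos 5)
First, a point of order: the paper does not prove this statement --- it is quoted from \cite{BN:finite} and used as a black box; as the authors note in the introduction, the argument there passes through generalized slices in the affine Grassmannian and their multiplication maps (whence the notation ${\bf m}$), not through a direct Lie-theoretic reduction by stages as you propose. Evaluated on its own terms, your outline correctly locates where the work is, but both of its load-bearing steps are asserted rather than proved. The first gap is the reduction-in-stages step. You need $N_{\chi_{\mu+\alpha}}$ to be (conjugate to) an extension of $N_{\chi_\mu}$ by an abelian lift of $\IG_\alpha$, with the character $\chi_{\mu+\alpha}$ restricting to $\chi_\mu$ and $\chi_\alpha$ on the two pieces. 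But $\gf_{>0}$ for $\mu$ and for $\mu+\alpha$ are the positive parts of two \emph{different} $\ad_h$-gradings; neither contains the other in general, the extra $\mathrm{len}\,\alpha$ directions need not normalize $N_{\chi_\mu}$ or be abelian modulo it, and the character need not extend. This is exactly why reduction by stages for W-algebras is a theorem with nontrivial hypotheses (cf. \cite{Genra:2024finred,Genra:2025redbystages}, whose hypotheses the present paper explicitly says differ from its own) rather than a formality. ``Compatible triples'' alone do not deliver it: you would have to exhibit the intermediate subalgebra, verify the Gan--Ginzburg transversality for the composite group, and check the character matching, and that is where the actual content lies.

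The second gap is the claim that ``the standard splitting for a free Hamiltonian action of an abelian group'' identifies the locus $\{e\neq 0\}\subset S_{G,\mu}$ with $\open{T^*}\IG_\alpha\times S_{G,\mu+\alpha}$. No such standard global splitting exists: for a free Hamiltonian action of a vector group with submersive moment map, Darboux--Weinstein gives a splitting only locally (or formally along a fibre). What is needed --- and what constitutes the main theorem of \cite{BN:finite} --- is a global algebraic section of the $\IG_\alpha$-torsor over the entire open locus together with a choice of complement for which all cross-terms in the symplectic form vanish, so that one lands exactly on $\tfrac1e\,dp\wedge de+\sum_i d\beta_i\wedge d\gamma_i$ plus the symplectic form of $S_{G,\mu+\alpha}$. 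This is obtained there by explicit construction, not by general principles. Your own closing caveats show you sense both difficulties, but as written the proposal defers precisely the two steps that make the theorem true.
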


%
%

\section{Drinfeld--Sokolov reduction for sheaves of vertex algebras}

For this section we take $H$ to be an algebraic Lie group, with $\hf$ its Lie algebra and $\chi\in \hf^*$ a regular character. Later, we will restrict to the case where $H$ is unipotent. For convenience, we pick some basis $(x_i)$ of $\hf$ such that the structure constants are given by $f_{ij}^k$.

Throughout, $Y$ is taken to be a symplectic variety over $\IC$, with a Hamiltonian action of $H$ and moment map $\Phi:Y\rightarrow \hf^*$. We assume that $\Phi$ is flat in a neighbourhood around $\chi$ and the fibre $\Phi^{-1}(\chi)\subset Y$ has a free action of $H$.

\subsection{Hamiltonian reduction for arc spaces}

Before moving to Drinfeld--Sokolov reduction, we first review the classical limit for arc spaces and vertex Poisson algebras. In usual symplectic geometry, we can define the symplectic reduction of $Y$ at $\chi$ as
\begin{equation}
	Y\red{\chi}H \coloneqq \Phi^{-1}(\chi)/H~.
\end{equation}
From the hypotheses on $Y$, the reduction $Y\red{\chi} H$ is a smooth variety and inherits a symplectic structure from the symplectic form on $Y$.

Let us describe how this construction lifts to the arc space $\JJ Y$. By functoriality, $\JJ Y$ is equipped with an action of the (positive) loop group $H(\OO)$. Moreover, we have a chiral moment map $\Phi_\infty: \JJ Y \rightarrow \JJ \hf^*$, \ie, a $H(\OO)$ equivariant map of vertex Poisson schemes. In particular, we have a chiral comoment map $\Phi^\#_\infty:\OO(\JJ \hf^*) \rightarrow \OO(\JJ Y)$, which is a map of vertex Poisson algebras.

Abusing notation, we write $\chi\in \JJ \hf^*$ to denote the constant map on $\ID$ with image $\chi\in \hf^*$. The fibre $\Phi_\infty^{-1}(\chi)$ is a $H(\OO)$-torsor, since $\Phi^{-1}(\chi)$ is by hypothesis. Thus, we define 
\begin{equation}
	\JJ Y \red{\chi} H(\OO) \coloneqq \Phi_\infty^{-1}(\chi)/H(\OO)~.
\end{equation}

We note that,
\begin{equation}
	\Phi_{\infty}^{-1}(\chi)/ H(\OO) \cong \JJ \bigg( \Phi^{-1}(\chi)/H\bigg)~.
\end{equation}
\subsection{Classical BRST reduction for vertex Poisson algebras}\label{ssec:BRST reduction for VPAs}

Recall the (classical) Clifford algebra $\Lambda(\hf\oplus \hf^*) = \OO(T^* \Pi\hf)$, which is generated by $\xi\in \hf^*$ and $x\in \hf$ in cohomological degrees $-1$ and $1$ respectively, with Poisson bracket
\begin{equation}
	\{\xi,x\} = \langle \xi,x\rangle~,
\end{equation}
Thus, $\JJ\Lambda(\hf\oplus\hf^*)\equiv\OO(\JJ T^*\Pi \hf)$ has the structure of a vertex Poisson algebra. Fixing the basis $(x_i)$ of $\hf$, we can take $(b_i,c_i)$ to be the strong generators of $\JJ\Lambda(\hf\oplus\hf^*)$---with Lambda brackets
\begin{equation}
	[b_i\,_\lambda c_j] = \delta_{ij}
\end{equation}

Let $V$ be a vertex Poisson algebra with a homomorphism of vertex Poisson algebras $\Phi: \OO(\JJ \hf^*)\rightarrow V$ which integrates to an action of $H(\OO)$. Consider the following element of $C^\bullet_{cl}(V,\chi) \equiv V\otimes \JJ \Lambda(\hf\oplus \hf^*)$.
\begin{equation}\label{eq:BRST classical diff}
d_\chi = \sum_{i} (\Phi(x_i) - \chi(x_i))c_i - \tfrac12 \sum_{i,j,k} f_{ij}^k b_kc_ic_j~.
\end{equation}
Then, $C^\bullet_{cl}(V,\chi)$ equipped with the differential $(d_\chi)_{(0)}$ is a chain complex and its cohomology $H^\bullet_{cl}(V,\chi)$, is the classical BRST reduction of $V$ at $\chi$.

\begin{prop}[\cite{Arakawa2015:local}]
Suppose $\Phi: \OO(\JJ \hf^*)\rightarrow V $ is flat as a map of commutative algebras, and that the action of $G(\OO)$ on $V$ is cofree, then for $i\neq 0$,
\begin{equation}
	H^i_{cl}(V,\chi) =0~.
\end{equation}
\end{prop}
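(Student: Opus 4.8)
The plan is to realize the classical BRST complex $C^\bullet_{cl}(V,\chi)$ as a filtered complex whose spectral sequence computes, in two stages, first a Koszul homology and then a Lie algebra cohomology; the flatness and cofreeness hypotheses are precisely what force each stage to collapse onto cohomological degree zero.

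First I would unpack the differential $(d_\chi)_{(0)}$. Writing the strong generators of $\JJ\Lambda(\hf\oplus\hf^*)$ as antighosts $b_i$ (cohomological degree $-1$) and ghosts $c_i$ (degree $+1$), a direct computation of the $(0)$-bracket on generators shows that $(d_\chi)_{(0)} = d_{\mathrm K} + d_{\mathrm{CE}}$. Here the \emph{Koszul} part $d_{\mathrm K}$ is the derivation determined by $d_{\mathrm K}(\del^n b_i) = \del^n(\Phi(x_i)-\chi(x_i))$, vanishing on $V$ and on the ghosts; and the \emph{Chevalley--Eilenberg} part $d_{\mathrm{CE}}$ acts on $v\in V$ by $v\mapsto \sum_i (x_i\cdot v)\,c_i$ through the infinitesimal $H(\OO)$-action, on ghosts by the cobracket $c_l\mapsto -\tfrac12\sum_{ij} f_{ij}^l c_i c_j$, and on antighosts by the dual coadjoint terms.

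Next I would filter $C^\bullet_{cl}(V,\chi)$ by ghost number (the number of factors $c$): since $d_{\mathrm K}$ preserves ghost number while $d_{\mathrm{CE}}$ raises it by one, the $E_0$-differential is $d_{\mathrm K}$. For fixed ghost number, the $d_{\mathrm K}$-complex is exactly the Koszul complex of $V$ on the system $\{\del^n(\Phi(x_i)-\chi(x_i))\}$ generating the ideal of the fibre $\Phi_\infty^{-1}(\chi)\subset \JJ Y$. The flatness of $\Phi$ makes $\Phi_\infty$ flat near $\chi$, so this system is a regular sequence and the Koszul complex is a resolution; hence the $E_1$-page is $\OO(\Phi_\infty^{-1}(\chi))\otimes\Lambda^\bullet(\text{ghosts})$, concentrated in antighost-degree $0$ and therefore in total degrees $\ge 0$. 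The induced differential $d_1$ is then precisely the Chevalley--Eilenberg cochain differential computing $H^\bullet(\hf(\OO);\OO(\Phi_\infty^{-1}(\chi)))$. Because $\Phi^{-1}(\chi)$ has a free $H$-action, $\Phi_\infty^{-1}(\chi)$ is an $H(\OO)$-torsor, so $\OO(\Phi_\infty^{-1}(\chi))$ is cofree over $\hf(\OO)$ --- this is the cofreeness hypothesis --- and its higher Lie algebra cohomology vanishes. Thus $E_2$ is concentrated in cohomological degree $0$, where it recovers $\OO(\JJ(Y\red{\chi}H))$, and $H^i_{cl}(V,\chi)=0$ for $i\neq 0$.

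The main obstacle is controlling the infinite-dimensionality of the arc-space picture: both the Koszul and the Chevalley--Eilenberg complexes are unbounded, so one must ensure that flatness really yields a Koszul resolution for the \emph{infinite} constraint system and that the spectral sequence converges. The standard device is the conformal (Kazhdan) grading, under which each weight space is finite-dimensional and weights are bounded below; this reduces the whole argument to bounded complexes in each fixed weight, in which ghost number is bounded and convergence is automatic. Verifying that the gradings are arranged so that this boundedness holds is the crux of the rigorous argument.
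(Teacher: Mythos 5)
The paper does not actually prove this proposition---it is quoted from \cite{Arakawa2015:local}---and your argument is essentially the proof given there: filter the BRST complex so that the associated graded differential is the Koszul differential (which collapses onto degree zero by the flatness/regular-sequence hypothesis) and the next-page differential is the Chevalley--Eilenberg differential (which collapses by cofreeness), with convergence handled weight-by-weight via the conformal grading so that ghost number is bounded in each weight. This is correct and is essentially the same approach as the cited source.
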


This BRST reduction admits a straightforward sheafification, as explained in \cite{Arakawa2015:local}, and we review it here for the case of the structure sheaf $\JJ Y$.

Starting from $\OO_{\JJ Y}$ we form the complex of sheaves%
\footnote{The tensor product is in the category of sheaves over $\JJ Y$, and $\Lambda(\hf^*\oplus \hf)$ is understood, by abuse of notation, to be the constant sheaf over $\JJ Y$.}
of vertex Poisson algebras $\CC_{cl}(\OO_{\JJ Y},\chi)=\OO_{\JJ Y}\otimes \JJ \Lambda(\hf^*\oplus \hf)$, equipped with differential given by the zero mode of the global section \eqref{eq:BRST classical diff}.

It is clear to see that on any open $U\in \JJ Y$, 
$$
\CC_{cl}^\bullet(\OO_{\JJ Y},\chi)(U) \cong C^\bullet_{cl}\big(\OO_{\JJ Y}(U),\chi\big)
$$
We write $\HH^{\bullet}_{ cl}(\OO_{\JJ Y},\chi)$ for the cohomology of this sheaf and note that it is supported on $\Phi_\infty^{-1}(\chi)\subset \JJ Y$ (see the Theorem below). Writing $p_\infty: \Phi_\infty^{-1}(\chi)\rightarrow \Phi_{\infty}^{-1}(\chi)/H(\OO)$, we have a sheaf of vertex Poisson algebras on $\JJ Y\red{\chi}H(\OO)$, given by
\begin{equation}
	\OO_{\JJ Y} \red{\chi} H(\OO) \coloneqq p_{\infty *} \HH^{0}_{cl}(\OO_{\JJ Y}\otimes\JJ\Lambda(\hf\oplus\hf^*))
\end{equation}
which is the BRST reduction of $\OO_{\JJ Y}$.

\begin{thm}[\protect{\cite[Theorem 2.3.3.1]{Arakawa2015:local}}]\label{thm:AKM vertex Poisson}
Suppose $Y$ is as above, then
\begin{enumerate}[(i)]
	\item for each affine open $U\subset \JJ Y$, 
	$$H^{i}\big(\CC_{cl}(\OO_{\JJ Y},\chi)(U)\big)\cong H^i_{cl}(\OO_{\JJ Y}(U),\chi)=0$$ 
	for $i\neq0$
	\item for each affine open $U\subset \JJ Y$, such that $U\cap \Phi_\infty^{-1}(\chi)$ is empty, $$H^{i}\big((\CC_{cl}(\OO_{\JJ Y},\chi)(U)\big)\cong H^i_{cl}(\OO_{\JJ Y}(U),\chi)=0$$ 
	for all $i$
	\item the support of $\HH^{i}_{cl}(\OO_{\JJ Y},\chi) $ is $\Phi_{\infty}^{-1}(\chi)$ and $\HH^{i}_{cl}(\OO_{\JJ Y},\chi)=0$ for $i\neq0$
	\item for each affine open $U\subset \JJ Y$, $$\HH^\bullet_{cl}\big(\OO_{\JJ Y},\chi\big)(U) \cong H^\bullet\big(\CC_{cl}(\OO_{\JJ Y},\chi)(U)\big)\cong H_{cl}^\bullet(\OO_{\JJ Y}(U),\chi)$$
	\item There is an isomorphism $$ \OO_{\JJ Y}\red{\chi}H(\OO)\equiv \OO_{\JJ (Y\red{\chi}H)}$$
	of sheaves of vertex Poisson algebras on $\JJ Y\red{\chi}H(\OO)$.
\end{enumerate}
\end{thm}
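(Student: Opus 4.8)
The plan is to reduce the entire statement to an affine-local computation of the BRST cohomology and then sheafify. On an affine open $U\subset \JJ Y$ the identification $\CC_{cl}(\OO_{\JJ Y},\chi)(U)\cong C^\bullet_{cl}(\OO_{\JJ Y}(U),\chi)$ is immediate from the definition of the complex as $\OO_{\JJ Y}$ tensored with the constant sheaf $\JJ\Lambda(\hf\oplus\hf^*)$; this already supplies the first isomorphism appearing in (i), (ii) and (iv). The remaining content is then (a) local acyclicity away from the moment-map fibre, (b) local vanishing in nonzero cohomological degree near the fibre, and (c) the geometric identification of the degree-zero cohomology.

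For (b), near the fibre I would invoke the preceding Proposition directly. On the open neighbourhood of $\Phi^{-1}(\chi)$ on which $\Phi$ is flat, one checks that the chiral comoment map $\Phi^\#_\infty:\OO(\JJ\hf^*)\to \OO_{\JJ Y}(U)$ remains flat---flatness of $\Phi$ passes to arc spaces since $\JJ$ preserves the relevant smooth and flat maps---and that freeness of the $H$-action on $\Phi^{-1}(\chi)$ upgrades to cofreeness of the $H(\OO)$-action via functoriality of the torsor structure. The Proposition then yields $H^i_{cl}(\OO_{\JJ Y}(U),\chi)=0$ for $i\neq 0$, which is (i) over such $U$.

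For (a) and hence part (ii), suppose $U\cap \Phi_\infty^{-1}(\chi)=\emptyset$. Then the jets of the functions $\Phi_\infty(x_i)-\chi(x_i)$ generate the unit ideal in $\OO_{\JJ Y}(U)$, and one builds a contracting homotopy for $(d_\chi)_{(0)}$ out of the odd contractions $b_{i,(0)}$: the linear-in-ghost term $\sum_i(\Phi(x_i)-\chi(x_i))c_i$ of $d_\chi$ is exactly the Koszul differential for this sequence, so choosing $g^i$ with $\sum_i g^i(\Phi(x_i)-\chi(x_i))=1$ makes $\sum_i g^i b_{i,(0)}$ a homotopy whose commutator with $(d_\chi)_{(0)}$ is the identity up to lower order. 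Filtering by the polynomial degree in the ghosts, the associated graded differential is the Koszul differential of a regular sequence cutting out the empty arc-space fibre, hence acyclic, and as the filtration is exhaustive and complete the acyclicity lifts, giving the total vanishing (ii). Combining (a) and (b), every point of $\JJ Y$ has an affine neighbourhood on which the complex of sections has cohomology concentrated in degree $0$ and is acyclic off $\Phi_\infty^{-1}(\chi)$; sheafifying gives $\HH^i_{cl}(\OO_{\JJ Y},\chi)=0$ for $i\neq0$ with support exactly $\Phi_\infty^{-1}(\chi)$, which is (iii). Since the terms of $\CC_{cl}$ are quasicoherent and affine opens of the arc space are acyclic for quasicoherent sheaves, the hypercohomology over any affine $U$ collapses to the cohomology of the complex of sections, yielding (iv) and, in particular, (i) for an arbitrary affine open.

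Finally, for (v), in cohomological degree $0$ the BRST cohomology computes the $H(\OO)$-basic functions on $\Phi_\infty^{-1}(\chi)$, so pushing forward along $p_\infty$ identifies $\HH^0_{cl}$ with the structure sheaf of $\Phi_\infty^{-1}(\chi)/H(\OO)$; the earlier isomorphism $\Phi_\infty^{-1}(\chi)/H(\OO)\cong \JJ(\Phi^{-1}(\chi)/H)=\JJ(Y\red{\chi}H)$ then gives $\OO_{\JJ Y}\red{\chi}H(\OO)\cong \OO_{\JJ(Y\red{\chi}H)}$, with compatibility of the vertex Poisson structures automatic since both sides carry the structure induced from $Y$ through the Poisson arc-space functor. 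I expect the main obstacle to be the nonzero-degree vanishing in this infinite-dimensional setting: making the Koszul homotopy and the ghost-degree filtration genuinely exhaustive and complete over the infinitely many jet modes, and carefully transferring flatness of the moment map and freeness of the group action from $Y$ to $\JJ Y$ and the loop group $H(\OO)$ so that the preceding Proposition applies.
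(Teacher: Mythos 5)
The paper does not actually prove this theorem: it is imported verbatim from Arakawa--Kuwabara--Malikov \cite{Arakawa2015:local} (their Theorem 2.3.3.1), and the only argument supplied in the text is the observation that $\CC_{cl}(\OO_{\JJ Y},\chi)(U)\cong C^\bullet_{cl}(\OO_{\JJ Y}(U),\chi)$ for affine $U$. Your reconstruction follows the same Kostant--Sternberg strategy as the cited source --- Koszul acyclicity of the unit ideal away from $\Phi_\infty^{-1}(\chi)$, the flat/cofree vanishing criterion near it, quasicoherence plus affine acyclicity to collapse hypercohomology, and identification of $H^0$ with the $H(\OO)$-basic functions --- so the architecture is right, and deriving (i) and (iv) from (iii) via the hypercohomology spectral sequence is a sound way to organise the statements.

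Three points need repair before this is a proof. First, ``flatness of $\Phi$ passes to arc spaces'' is not true in general: $\JJ$ preserves smooth morphisms, immersions and fibre products, but not flatness (the arc space of a flat family with singular fibres need not be flat over the arc space of the base). One must either use that in the relevant situations the moment map is a smooth surjection (as it is for the Gan--Ginzburg projection $\gf^*\to\gf_{>0}^*$ and its equivariant analogue), or check directly that the jets of the moment map components form a regular sequence in $\OO_{\JJ Y}(U)$. Second, your contracting homotopy $\sum_i g^i b_{i,(0)}$ uses only the functions $\Phi(x_i)-\chi(x_i)$ themselves, whereas $\Phi_\infty^{-1}(\chi)$ is cut out by \emph{all} of their jets; an affine open can avoid the fibre while each $\Phi(x_i)-\chi(x_i)$ still vanishes somewhere on it, so the partition of unity, and hence the homotopy, must be built from the full collection of modes $(\Phi(x_i)-\chi(x_i))_{(-n)}$ paired with the corresponding modes of the $b_i$. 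Third, the filtration by polynomial degree in the ghosts is unbounded within each fixed cohomological degree (the degree $\#c-\#b$ is fixed but $\#c+\#b$ is not), so convergence of the associated spectral sequence is not automatic; the cited source avoids this by the Kostant--Sternberg tensor decomposition of the complex into a Koszul factor and a Chevalley--Eilenberg factor, equivalently by exploiting the conformal-weight grading, which is bounded below with finite-dimensional graded pieces. You flag the last issue yourself; the first two are genuine gaps as written, though both are reparable and do not affect the overall strategy.
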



\subsection{Drinfeld--Sokolov reduction}\label{ssec:DS reduction}

Now, we lift this construction to the quantized setting. 

Write ${\rm Cl}(\hf)$ for the vertex (super)algebra generated by $\hf^*\oplus \hf$ in cohomological degrees $-1$ and $1$ respectively, with OPEs
\begin{equation}
	\xi(z)x(w) \sim \frac{ \langle\xi,x\rangle}{z-w}~.
\end{equation}
Write, ${\rm Cl}_\hbar(\hf)$ for the $\hbar$-adic version of ${\rm Cl}(\hf)$, with OPEs
\begin{equation}
	\xi(z)x(w) \sim \frac{\hbar \langle\xi,x\rangle}{z-w}~.
\end{equation}

The vertex algebra ${\rm Cl}(\hf)$ is, perhaps more familiarly, known as the $bc$-ghost system associated with $\hf$. The vertex Poisson algebra $\JJ \Lambda^\bullet(\hf^*\oplus \hf)$ is, therefore, its classical analogue. Moreover, ${\rm Cl}_{\hbar}(\hf)|_{\hbar=0} \cong \JJ\Lambda(\hf\oplus\hf^*)$ as vertex Poisson algebras, \ie, it is a quantization.

 The vertex algebra ${\rm Cl}_\hbar(\hf\oplus \hf^*)$ is strongly generated by the odd fields $b_i(z)$ and $c_i(w)$ of cohomological degree $-1$ and $+1$, with OPEs
\begin{equation}
	b_i(z) c_j(w) \sim \frac{\delta_{ij}}{z-w}
\end{equation}


To continue, as in \cite{Arakawa2015:local}, we have to make a technical restriction on $H$. 
\emph{For the chiral setting we  take the group $H$ is to be unipotent}. 
\\
This special case of BRST reduction, is called Drinfeld--Sokolov reduction.

Given a vertex algebra $V$ with a comoment map $\Phi: V(\hf)\rightarrow V$ that integrates to an action of $H(\OO)$, we may form the complex $C_{\rm DS}^\bullet(V,\chi) = V\otimes {\rm Cl}(\hf)$ equipped with the differential $(d^{ch}_{\chi})_{(0)}$, where $d^{ch}_{\chi}$ is now given by
\begin{equation}\label{eq:diff for hbar DS}
d^{ch}_\chi = \sum_{i} (\Phi(x_i) - \chi(x_i))_{(-1)}c_i - \tfrac12 \sum_{i,j,k} f_{ij}^k (b_{k,(-1)}(c_{i,(-1)}c_j)~.
\end{equation}
The cohomology $H^\bullet_{\rm DS,\hbar}(V,\chi)$ is the Drinfeld--Sokolov reduction of $V$ at $\chi$.

Now, suppose $V$ is a $\hbar$-adic vertex algebra equipped with a morphism $\Phi_{ch}:V_\hbar(\hf) \rightarrow V$, that integrates to an action of $H(\OO)$. Then, we form the complex $C^\bullet_{\rm DS,\hbar}(V,\chi)\equiv V\otimes {\rm Cl}_{\hbar}(\hf)$, where the tensor product is over $\ik\Ph$ and completed. The differential is given by 
\begin{equation}
\frac{1}{\hbar} d_\chi^{ch}
\end{equation}
where $d_{\chi}^{ch}$ is the same expression as \eqref{eq:diff for hbar DS}, now interpreted as elements of $V\otimes {\rm Cl}_{\hbar}(\hf)$.
The cohomology $H^\bullet_{\rm DS,\hbar}(V,\chi)$ is the $\hbar$-adic Drinfeld--Sokolov reduction of $V$ at $\chi$.

\subsection{W-algebras}

As a motivating example, we review the construction of $\hbar$-adic W-algebras by Drinfeld--Sokolov reduction of $V_\hbar^k(\gf)$.

Let $\mu$ be a partition of $N$, and consider $\chi_\mu\in\IO_\mu$ as in Section \ref{ssec:Lieprelim}. 

We have a natural comoment map by inclusion, $V_\hbar(\gf_{>0})\rightarrow V_\hbar^k(\gf)$, which integrates to an action of $N_\chi$. Form the $\hbar$-adic DS reduction complex $C_{{\rm DS},\hbar}(V^k(\gf),\chi)$.

The cohomology of this chain complex is concentrated in degree zero \cite{Arakawa2015:local}, and we have 
\begin{equation}
	\WW_{\hbar,\mu} \coloneqq H^0_{\rm DS,\hbar}(V_\hbar^k(\gf), \chi_\mu)~.
\end{equation}

By \cite[Theorem 3.2.6.1]{Arakawa2015:local}, we may recover the ordinary affine W-algebras, as defined in say \cite{Arakawa:2017Walg,Kac:2003DS}, by
\begin{equation}
	 H^0_{\rm DS}(V^{k}(\gf),\chi_\mu)=\WW_{\mu} \cong \bigg(\WW_{\hbar,\mu}\tensor{\ik\Ph}\ik(\!(\hbar)\!)\bigg)^{\IG_\hbar}~.
\end{equation}

\subsection{Drinfeld--Sokolov reduction for sheaves}


In Section \ref{ssec:BRST reduction for VPAs}, we described how to sheafify BRST reduction for the special case of the structure sheaf $\OO_{\JJ Y}$. Here we describe how to lift this to a quantization of the structure sheaf, \ie, a sheaf of $\hbar$-adic vertex algebras.

Let $\AA$ be a strongly $H(\OO)$-equivariant quantization of $\OO_{\JJ Y}$ with comoment map $\Phi_{ ch}:V_\hbar(\hf)\rightarrow \AA$. We form the complex $\CC^\bullet_{\rm DS}(\AA,\chi) = \AA\otimes {\rm Cl}_\hbar(\hf)$, where the (completed) tensor product is taken over sheaves of $\ik\Ph$-modules. Once again, we have a distinguished global section
\begin{equation}
	d^{ch}_\chi(z) = \sum_i ((\Phi_{ ch}(x_i)(z) - \chi(x_i))c_i(z)  - \tfrac12 \sum_{i,j,k} f_{ij}^k :b_k(z) c_i(z)c_j(z):
\end{equation}
where the products are now understood to be normally ordered. The operator $\tfrac{1}{\hbar}
d^{ ch}_{\chi,(0)}$ defines a differential on $\AA\otimes {\rm Cl}_\hbar(\hf)$, \textit{c.f.}, \eqref{eq:diff for hbar DS}. We denote the cohomology sheaf by $\HH^{\bullet}(\AA\otimes {\rm Cl}_\hbar(\hf\oplus\hf^*))$, and note that it is supported on $\Phi_\infty^{-1}(\chi)$ (see the theorem below). We have a candidate for a quantization of $\JJ Y\red{\chi}  H(\OO)$, given by 
\begin{equation}
\AA\red{\chi}H(\OO) \coloneqq p_{\infty*} \HH^{0}(\AA,\chi)~.	
\end{equation}

\begin{thm}[\protect{\cite[Theorem 2.3.5.1]{Arakawa2015:local}}]\label{thm:AKM chiral}
	Let $Y$ be a smooth symplectic variety with a Hamiltonian action of a connected unipotent group $H$. Let $\Phi:Y\rightarrow \hf$ be the moment map, and let $\AA$ be a strongly $H(\OO)$-equivariant quantization of $\OO_{\JJ Y}$.
	\begin{enumerate}[(i)]
		\item For each affine open $U\subset \JJ Y$, $H^{i}\big((\CC_{\rm DS}^{ch}(\AA,\chi)(U)\big) =0$ for $i<0$
		\item For each affine open $U\subset \JJ Y$, such that $U\cap \Phi_\infty^{-1}(\chi)$ is empty, $H^{i}\big((\CC_{\rm DS}^{ch}(\AA,\chi))(U)\big) =0$ for all $i$
		\item The support of $\HH^i(\AA,\chi)$ is $\Phi_\infty^{-1}(\chi)$ and $\HH^i(\AA,\chi)=0$ for $i\neq0$
		\item For each affine open $U\subset \JJ Y$, $\HH^i(\AA,\chi)(U) \cong H^{i}\big({\rm \CC}^{ch}_{\rm DS}(\AA,\chi))(U)\big)$
		\item The DS reduction $\AA\red{\chi} H(\OO)	$ is a quantization of $\OO_{\JJ Y\red{\chi}H(\OO)}$
	\end{enumerate}
\end{thm}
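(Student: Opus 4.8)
The plan is to deduce each assertion from its classical counterpart, Theorem~\ref{thm:AKM vertex Poisson}, by regarding $\CC^{ch}_{\rm DS}(\AA,\chi)$ as an $\hbar$-adic deformation of the classical BRST complex of Section~\ref{ssec:BRST reduction for VPAs}. The starting point is a compatibility of differentials modulo $\hbar$. Because $\AA$ is \emph{strongly} $H(\OO)$-equivariant, the chiral comoment map reduces to the classical one, $\Phi_{ch}\bmod\hbar=\Phi^\#_\infty$, and ${\rm Cl}_\hbar(\hf)/\hbar\cong\JJ\Lambda(\hf\oplus\hf^*)$. The factor $\tfrac1\hbar$ in the differential is arranged precisely so that $\tfrac1\hbar d^{ch}_{\chi,(0)}$ is a well-defined $\ik\Ph$-linear operator whose reduction modulo $\hbar$ is the classical BRST differential $(d_\chi)_{(0)}$. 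Hence $\CC^{ch}_{\rm DS}(\AA,\chi)/\hbar\cong\CC_{cl}(\OO_{\JJ Y},\chi)$ as complexes of sheaves. Since $\AA$ and ${\rm Cl}_\hbar(\hf)$ are flat and complete over $\ik\Ph$, so is each term of $\CC^{ch}_{\rm DS}(\AA,\chi)$, and in particular multiplication by $\hbar$ is injective.

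For the local statements (i) and (ii) I would fix an affine open $U\subset\JJ Y$ and induct on the truncations $C^\bullet_m:=C^\bullet_{\rm DS}(\AA/\hbar^m,\chi)(U)$ over $\ik[\hbar]/\hbar^m$. Flatness identifies the kernel of $\AA/\hbar^m\to\AA/\hbar^{m-1}$ with $\AA/\hbar\cong\OO_{\JJ Y}$, yielding a short exact sequence of complexes $0\to\CC_{cl}(\OO_{\JJ Y},\chi)(U)\to C^\bullet_m\to C^\bullet_{m-1}\to 0$. Its long exact sequence, together with the classical vanishing of Theorem~\ref{thm:AKM vertex Poisson} for the outer term and the inductive hypothesis for $C^\bullet_{m-1}$, propagates vanishing in negative degrees (i) to every truncation, and, when $U$ is disjoint from $\Phi^{-1}_\infty(\chi)$, vanishing in all degrees (ii). Passing to the limit over $m$ is safe in these degrees because the neighbouring classical groups vanish on affine $U$, so the relevant towers have surjective transition maps and the $\lim^1$ correction to $H^i(\varprojlim C^\bullet_m)$ vanishes.

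The sheaf-level assertions are then local. For (iii), off the fibre some component $\Phi_{ch}(x_i)-\chi(x_i)$ becomes invertible and (ii) gives total acyclicity, while at a point of $\Phi^{-1}_\infty(\chi)$ the freeness of the $H(\OO)$-action supplies a Koszul-type contracting homotopy---the $\hbar$-adic deformation of the classical cofreeness argument---that annihilates cohomology in all nonzero degrees, positive as well as negative; hence $\HH^i(\AA,\chi)=0$ for $i\neq 0$ with support exactly $\Phi^{-1}_\infty(\chi)$. For (iv), each term of $\CC^{ch}_{\rm DS}(\AA,\chi)$ is acyclic on the affine $U$ (being assembled from the quasicoherent underlying module of $\AA$ and the constant sheaf ${\rm Cl}_\hbar(\hf)$), so the hypercohomology $\mathbb{H}^\bullet(U,\CC^{ch}_{\rm DS})$ is computed both by $H^\bullet(\CC^{ch}_{\rm DS}(\AA,\chi)(U))$ and, via the cohomology-sheaf spectral sequence, by $\Gamma(U,\HH^\bullet(\AA,\chi))$; comparing the two gives the identification $\HH^i(\AA,\chi)(U)\cong H^i(\CC^{ch}_{\rm DS}(\AA,\chi)(U))$.

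Finally, (v) follows from the degree-$0$ concentration: $\AA\red{\chi}H(\OO)=p_{\infty*}\HH^0(\AA,\chi)$ is a sheaf of $\hbar$-adic vertex algebras, complete over $\ik\Ph$ since $\AA$ is. The long exact sequence for multiplication by $\hbar$ shows that its $\hbar$-torsion would come from $H^{-1}$ of the classical complex, which vanishes, so it is torsion-free and hence flat; the same sequence, using $\HH^1=0$, shows reduction modulo $\hbar$ commutes with $H^0$, giving $(\AA\red{\chi}H(\OO))/\hbar\cong\HH^0_{cl}(\OO_{\JJ Y},\chi)\cong\OO_{\JJ Y\red{\chi}H(\OO)}$ by part (v) of Theorem~\ref{thm:AKM vertex Poisson}. \textbf{The main obstacle} is exactly this interchange of cohomology with the $\hbar$-adic inverse limit: one must ensure that no phantom cohomology or $\hbar$-torsion is created in degree $0$ on passing from the truncations to the completed complex. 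This is where the degree-$0$ concentration inherited from the classical theorem is indispensable, as it forces the truncation towers to be Mittag--Leffler and is the one point at which genuine flatness of $\AA$, rather than merely its classical limit, is used.
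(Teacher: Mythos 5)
The paper does not prove Theorem \ref{thm:AKM chiral}; it is quoted wholesale from \cite[Theorem 2.3.5.1]{Arakawa2015:local}, so there is no internal argument to compare yours against. That said, your reconstruction is essentially correct and follows the natural route: identify $\CC^{ch}_{\rm DS}(\AA,\chi)/\hbar$ with $\CC_{cl}(\OO_{\JJ Y},\chi)$ (this is exactly what strong equivariance and the normalization $\tfrac1\hbar d^{ch}_{\chi,(0)}$ are for), run the long exact sequences of the truncation tower against Theorem \ref{thm:AKM vertex Poisson}, and control the $\hbar$-adic limit by Mittag--Leffler; your treatment of (v) (torsion-freeness of $\HH^0$ from $H^{-1}_{cl}=0$, completeness and commutation of $H^0$ with reduction mod $\hbar$ from the vanishing of $\HH^1$) is also right. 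The one step you assert rather than establish is the positive-degree vanishing in (iii), for which you invoke an $\hbar$-adic ``Koszul-type contracting homotopy'' coming from freeness of the $H(\OO)$-action. You do not need it: part (i) of Theorem \ref{thm:AKM vertex Poisson} gives $H^i_{cl}(\OO_{\JJ Y}(U),\chi)=0$ for \emph{all} $i\neq 0$ on every affine open, so the same truncation induction already kills $H^i(\CC^{ch}_{\rm DS}(\AA,\chi)(U))$ for every $i\neq 0$, with the only nontrivial $\lim^1$ term (in degree $1$) vanishing because the tower $H^0(C^\bullet_m)$ has surjective transition maps, again by $H^1_{cl}=0$. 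Given affine-local acyclicity in all nonzero degrees, (iii) and (iv) follow by the standard d\'evissage from $\Gamma(U,-)$-acyclicity of the terms of the complex; I would phrase your hypercohomology comparison that way, since as written the cohomology-sheaf spectral sequence quietly assumes acyclicity of the sheaves $\HH^q$ themselves, which is what you are trying to prove.
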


\begin{rem}
Note that even if $\AA$ is a graded quantization of $\OO_{\JJ Y}$, the reduction will generically fail to be a graded quantization of $\OO_{\JJ Y\red{\chi} H}$, since the character $\chi$ is not a $\IG_\hbar$ fixed point.	
\end{rem}


\section{Sheaves of equivariant W-algebras}

Write $\DD_{\hbar,G,k}^{ch}$ for the sheaf of $\hbar$-adic cdos---at level $k\in\IC$---on $\JJ T^*G$, see Appendix \ref{app:cdos} for a construction. For now, we note that its global sections are 
\begin{equation}
	\DD_{\hbar,k}^{ch}(G)
\end{equation}
the $\hbar$-adic chiral differential operators on the group $G$ at level $k\in \IC$---a $\hbar$-adic analogue of the cdos on $G$ described in \cite{Gorbounov:2000cdosG}. 

We want to construct a quantization of $\JJ S_{G,\mu}$ for any coweight $\mu$. Starting with the trivial coweight $\mu=[1^N]$, $\JJ S_{G,[1^N]}\cong \JJ T^*G$. We have a one-parameter family of quantizations of this arc space, given by the sheaf of $\hbar$-adic chiral differential operators $ \DD^{ ch}_{\hbar,G,k}$, for a choice of level, $k\in \ik$. 

The global sections, $\DD^{ch}_{\hbar,k}(G)$, has two commuting current subalgebras at dual levels, \ie, there are two chiral comoment maps 
\begin{equation}\label{eq:G comoment maps}
	\Phi_L : V^k_\hbar(\gf) \rightarrow \DD^{ch}_{\hbar,G,k} \text{ and } \Phi_R: V_\hbar^{k-2h^\vee}(\gf)\rightarrow \DD^{ch}_{\hbar,G,k}~,
\end{equation}
where $h^\vee$ is the dual Coxeter number. Fix a representative $\chi_\mu\in \IO_\mu$ and let $N_\chi$ be defined as in Section \ref{ssec:Lieprelim}. By restricting the left comoment map $\Phi_L$ we have a comoment map for the $N_\chi(\OO)$ action on $\JJ T^*G$.

Then, we construct a putative 1-parameter family of quantizations of $\JJ S_{G,\mu}$, given by the DS reductions
\begin{equation}
\AA_{\hbar,G,\mu}^k \coloneqq	 \DD^{ch}_{\hbar,G,k}\red{\chi_\mu} N_{\chi_\mu}(\OO)~,
\end{equation}
for $k\in \IC$.

Noting that the $N_{\chi_\mu}$ action on $T^*G$ satisfies the conditions of Theorem \ref{thm:AKM chiral}, we see that $\AA_{\hbar,G,\mu}$ is indeed a graded quantization of $\JJ S_{G,\mu}$. Write
	\begin{equation}
		\mathbf{W}_{\hbar,G,\mu}^k\coloneqq\Gamma(\JJ S_{G,\mu},\AA_{\hbar,G,\mu}^k),
	\end{equation}
where the notation is meant to be evocative of the equivariant affine W-algebras introduced in \cite{Arakawa:2018egx}.

\begin{prop}
	The quantizations $\AA^k_{\hbar,G,\mu}$ are strongly $G(\OO)$-equivariant for the right $G(\OO)$-action on $\JJ S_{G,\mu}$.
\end{prop}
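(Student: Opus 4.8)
The plan is to show that strong right $G(\OO)$-equivariance is inherited directly from the right chiral comoment map $\Phi_R$ of \eqref{eq:G comoment maps}, which survives the Drinfeld--Sokolov reduction because its image commutes with that of the left comoment map $\Phi_L$ used to build the reduction. Since the reduction is performed only along $N_{\chi_\mu}(\OO)$ acting on the left, and left and right translations on $G$ commute, the right $G(\OO)$-structure should pass to cohomology essentially for free; the work is in checking this at the chiral level and confirming that the descended map is a genuine chiral comoment map. Because strong equivariance of the sheaf $\AA^k_{\hbar,G,\mu}$ is defined through its global sections $\mathbf{W}_{\hbar,G,\mu}^k$, I would phrase everything in terms of the global section $\Phi_R: V_\hbar^{k-2h^\vee}(\gf)\to \DD^{ch}_{\hbar,k}(G)$.

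First I would verify that $\Phi_R(y)$ is a cocycle for the reduction differential, for every $y\in\gf$. The differential $(d^{ch}_\chi)_{(0)}$ is built from the restriction of $\Phi_L$ to $\gf_{>0}$, the constant $\chi_\mu$, and the ghosts of $\mathrm{Cl}_\hbar(\gf_{>0})$. The only term that could pair nontrivially with $\Phi_R(y)$ is $\sum_i \Phi_L(x_i)_{(-1)}c_i$, but the left and right current subalgebras commute by \eqref{eq:G comoment maps}, so $\Phi_L(x_i)_{(n)}\Phi_R(y)=0$ for all $n\ge 0$; the remaining terms lie in the ghost sector and annihilate $\Phi_R(y)$. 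Hence $(d^{ch}_\chi)_{(0)}\Phi_R(y)=0$, and by part (iii) of Theorem \ref{thm:AKM chiral} the cohomology is concentrated in degree zero, so $\Phi_R$ factors through a map of $\hbar$-adic vertex algebras
\begin{equation}
	\bar\Phi_R: V_\hbar^{k-2h^\vee}(\gf)\longrightarrow \mathbf{W}_{\hbar,G,\mu}^k~.
\end{equation}

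Next I would identify $\bar\Phi_R$ as a chiral comoment map for the right action. Classically the left moment map is projection to $\gf^*$ and is invariant under right translation, so right $G$ preserves each $N_{\chi_\mu}$-moment fibre and descends to the free right action with $S_{G,\mu}/G\cong S_\mu$ of Theorem \ref{thm:BN finite main}; thus the right $G(\OO)$-action commutes with the reduction and acts on $\JJ S_{G,\mu}$. I would then check that the modes of $\bar\Phi_R$ integrate this $G(\OO)$-action, using that the modes of $\Phi_R$ integrate the right $G(\OO)$-action on $\DD^{ch}_{\hbar,G,k}$ and that this action commutes with $N_{\chi_\mu}(\OO)$, hence descends. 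Finally, strongness requires that $\varphi$ intertwine $\bar\Phi_R|_{\hbar=0}$ with the classical right comoment map on $\OO_{\JJ S_{G,\mu}}$, which I would deduce from the classical counterpart of this construction, \ie, from Theorem \ref{thm:AKM vertex Poisson} applied to the right comoment map.

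The main obstacle is comparatively mild, since the commutation of the two current subalgebras is already part of the structure recorded in \eqref{eq:G comoment maps}: given that, the cocycle property and the descent of the right action to cohomology are formal. What remains to be checked with care is that the descended modes integrate to all of $G(\OO)$---not merely to its Lie algebra---and that $\varphi$ intertwines $\bar\Phi_R|_{\hbar=0}$ with the classical right comoment map, the latter following from the classical limit of Drinfeld--Sokolov reduction in Theorem \ref{thm:AKM vertex Poisson}. I would also keep track of the shifted level $k-2h^\vee$, at which the right comoment map is defined.
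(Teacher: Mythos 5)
Your proposal is correct and follows essentially the same route as the paper: the paper's proof simply observes that the statement holds for $\mu=[1^N]$ via $\Phi_R$ and that the right $G(\OO)$-action commutes with the $N_{\chi_\mu}(\OO)$-action used in the reduction, so the comoment map descends to $H^0_{\rm DS}(\Phi_R): V^{k-2h^\vee}(\gf)\rightarrow {\bf W}^k_{\hbar,G,\mu}$ by functoriality of Drinfeld--Sokolov reduction. Your cocycle computation and descent argument are exactly what that appeal to functoriality compresses.
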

\begin{proof}
 Note that this is true when $\mu=[1^N]$, with the comoment map given in \eqref{eq:G comoment maps}. 
 Since the right $G(\OO)$-action commutes with the action of $N_{\chi_\mu}(\OO)$ that is used in the reduction, we have a comoment map 
 \[
 H^0_{\rm DS}(\Phi_R): V^{k-2h^\vee}(\gf)\rightarrow {\bf W}^k_{\hbar,G,\mu}
 \]
 by functoriality of DS-reduction.
\end{proof}

Let us describe how to recover the equivariant affine W-algebras of \textit{loc.\ cit.}
We write 
\begin{equation}
\big({\bf W}^k_{\hbar,G,\mu}\big)_{G(\OO)\times\IG_\hbar-{\rm fin}} \coloneqq \{v \in{\bf W}^k_{\hbar,G,\mu}\,|\, v \text{ is in a fin.\ dim.\ rep.\ of } G(\OO)\times \IG_\hbar\}~.
\end{equation}

\begin{lem}
The subalgebra $\big({\bf W}^k_{\hbar,G,\mu}\big)_{G(\OO)\times\IG_\hbar-{\rm fin}}$ is a vertex algebra over $\ik[\hbar]$.
\end{lem}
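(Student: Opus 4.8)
The plan is to separate the claim into three parts: that the indicated subspace is closed under all of the vertex algebra operations, that its elements are polynomials rather than power series in $\hbar$, and that honest (non-$\hbar$-adic) locality holds, so that one genuinely obtains a vertex algebra over $\ik[\hbar]$. Everything is driven by two inputs: the $(G(\OO)\times\IG_\hbar)$-equivariance of the structure maps, which holds since $\AA^k_{\hbar,G,\mu}$ is a graded, strongly $G(\OO)$-equivariant quantization, and the fact that the Kazhdan grading on the classical limit $\OO(\JJ S_{G,\mu})$ is bounded below. For closure, note that $G(\OO)\times\IG_\hbar$ acts by $\hbar$-adic vertex algebra automorphisms, so every product $a_{(n)}b$ and the translation $\del$ are equivariant and $\ket{0}$ is invariant. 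If $a,b$ lie in finite-dimensional subrepresentations $V_a,V_b$, then $a_{(n)}b$ lies in the image of the equivariant map $V_a\otimes V_b\to{\bf W}^k_{\hbar,G,\mu}$, a subquotient of the finite-dimensional representation $V_a\otimes V_b$, hence is again finite; the same applies to $\del a$ and to $\hbar a$. Thus the subspace is a $\ik[\hbar]$-submodule stable under all structure maps.

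For polynomiality, fix a $\IG_\hbar$-equivariant identification of ${\bf W}^k_{\hbar,G,\mu}$ with the $\hbar$-adic completion of $\OO(\JJ S_{G,\mu})\otimes\ik[\hbar]$, so that a general element is a series $\sum_{m\ge 0}v_m\hbar^m$ and, since $\IG_\hbar$ scales $\hbar$ with weight one, a homogeneous component $v\hbar^m$ with $v$ of Kazhdan degree $d$ carries total weight $m+d$. A $\IG_\hbar$-finite vector is a finite sum of weight vectors, so it involves only finitely many total weights; because the Kazhdan grading satisfies $\OO(\JJ S_{G,\mu})_d=0$ for $d<0$, a fixed total weight $w$ forces $d=w-m\ge 0$, i.e.\ $0\le m\le w$, whence only finitely many powers of $\hbar$ occur. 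Every $(G(\OO)\times\IG_\hbar)$-finite vector is therefore a polynomial in $\hbar$, and the subspace is an honest $\ik[\hbar]$-module, not an $\hbar$-adically complete one.

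For honest locality and the axioms, observe that for finite $a,b$ the product $a_{(n)}b$ is finite, and the $\IG_\hbar$-weight of $a_{(n)}b$ is bounded above by a constant $W$ independent of $n$: the operations $_{(n)}$ preserve the $\IG_\hbar$-weight, the $\hbar$-powers compensating the drop in Kazhdan degree. By the previous paragraph this bounds the $\hbar$-degree of $a_{(n)}b$ by $W$ uniformly in $n$. On the other hand, $\hbar$-adic locality—coming from locality in each truncation ${\bf W}^k_{\hbar,G,\mu}/\hbar^{m}{\bf W}^k_{\hbar,G,\mu}$—gives $a_{(n)}b\in\hbar^{W+1}{\bf W}^k_{\hbar,G,\mu}$ for $n\gg 0$. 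A polynomial of $\hbar$-degree $\le W$ lying in $\hbar^{W+1}{\bf W}^k_{\hbar,G,\mu}$ must vanish, so $a_{(n)}b=0$ for $n\gg 0$ on the nose. The remaining axioms (vacuum, skew-symmetry, weak associativity, sesquilinearity) are inherited verbatim as identities over $\ik[\hbar]$, which exhibits the subspace as a vertex algebra over $\ik[\hbar]$.

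I expect the main obstacle to be the boundedness below of the Kazhdan grading on $\OO(\JJ S_{G,\mu})$, since this is exactly what fails to be automatic and is indispensable: without it a single $\IG_\hbar$-weight vector could be an infinite series in $\hbar$. In the right trivialization $T^*G\cong\gf^*\times G$ the distinguished $\IG_\hbar$ scales only the $\gf^*$-directions, through the Kazhdan cocharacter, and fixes $G$, so the grading induced on the reduction $S_{G,\mu}$—and on its arc space under the constant-arc action—is non-negative; pinning this down, together with the compatibility of the $\hbar$-weight and the Kazhdan weight into the single $\IG_\hbar$-weight used above and the weight-preservation of the products $_{(n)}$, is the step demanding the most care.
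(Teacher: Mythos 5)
The paper records this lemma without proof, so there is no argument to compare yours against; judged on its own, your proof has a genuine gap at exactly the point you identify as the crux. You assert that the distinguished $\IG_\hbar$-action fixes the $G$-factor of $S_{G,\mu}\cong G\times S_\mu$, so that the induced grading on $\OO(\JJ S_{G,\mu})$ is non-negative. For $\mu\neq [1^N]$ this is false. The Kazhdan modification of the fibre-scaling action necessarily acts on the base by left translation by the cocharacter $t^{h/2}$: without that twist the action does not fix the character $\chi_\mu$ (so does not descend to the reduction), and it is not a symplectomorphism of $T^*G$ up to uniform scaling, since for $x\in\gf_j$ and $f\in\OO(G)$ the bracket $\{x,f\}=x(f)$ would be rescaled by $t^{j/2-1}$ rather than by $t^{-1}$. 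Under left translation by $t^{h/2}$ the Peter--Weyl decomposition of $\OO(G)$ carries weights unbounded in both directions, so the $\IG_\hbar$-grading on $\OO(\JJ S_{G,\mu})\cong\OO(\JJ G)\otimes\OO(\JJ S_\mu)$ is \emph{not} bounded below; the paper's own remark that the filtration on ${\rm Fil}({\bf W}^k_{\hbar,G,\mu})$ is ``in general unbounded below'' corroborates this. Both of your key steps rest on that boundedness: without it a single $\IG_\hbar$-weight vector can be an honest power series in $\hbar$, and a weight vector of weight $\le W$ lying in $\hbar^{W+1}{\bf W}^k_{\hbar,G,\mu}$ need not vanish, so neither polynomiality nor the upgrade from $\hbar$-adic to honest locality follows.

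Your first step (closure under $_{(n)}$, $\del$ and $\ket{0}$ via equivariance of the structure maps) is correct, and the whole argument does go through verbatim in the untwisted case $\mu=[1^N]$, i.e.\ for $\DD^{ch}_{\hbar,k}(G)$, where $\OO(\JJ G)$ sits in weight zero and the grading is genuinely non-negative. For general $\mu$ some additional input is unavoidable. Two routes that would close the gap: (a) show that for each irreducible representation $\sigma$ of $G(\OO)\times\IG_\hbar$ the $\sigma$-isotypic component of ${\bf W}^k_{\hbar,G,\mu}$ is a finite-rank $\ik\Ph$-module spanned by finitely many honest vectors; then all $a_{(n)}b$ lie in a single finite-dimensional subspace $F$ determined by $V_a\otimes V_b$, the descending chain $F\cap\hbar^m{\bf W}^k_{\hbar,G,\mu}$ stabilizes at $0$ by separatedness, and honest locality follows; or (b) identify the finite part directly with the Rees algebra of the non-$\hbar$-adic reduction ${\bf W}^k_{G,\mu}$ with respect to the filtration conjugate to the Li filtration, as in the proof of the proposition immediately following the lemma, so that honest locality is inherited from the honest vertex algebra ${\bf W}^k_{G,\mu}$ (taking care that this comparison is run at the level of the Drinfeld--Sokolov complex so as not to presuppose the lemma). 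As written, your argument establishes the lemma only in the case $\mu=[1^N]$.
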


Write 
\begin{equation}
{\rm Fil}({\bf W}^k_{\hbar,G,\mu}) = \bigg(\big({\bf W}^k_{\hbar,G,\mu}\big)_{G(\OO)\times\IG_\hbar-{\rm fin}}\tensor{\ik[\hbar]} \ik[\hbar,\hbar^{-1}]\bigg)^{\IG_\hbar}~,
\end{equation}
for the filtered vertex algebra corresponding to $\big({\bf W}^k_{\hbar,G,\mu}\big)_{G(\OO)\times\IG_\hbar-{\rm fin}}$ under the Rees correspondence. The ascending filtration $F_\hbar^i\,{\rm Fil}({\bf W}^k_{\hbar,G,\mu})$ is defined by
\begin{equation}
	F_\hbar^i\,{\rm Fil}({\bf W}^k_{\hbar,G,\mu}) = \{v \in {\rm Fil}({\bf W}^k_{\hbar,G,\mu})\,| \, \hbar^i v  \in {\bf W}^k_{\hbar,G,\mu}\}~.
\end{equation}
Note that this filtration is separated, but in general unbounded below and ${\rm Fil}({\bf W}^k_{\hbar,G,\mu})$ is \textit{not completed} with respect to this filtration.

\begin{prop}\label{prop:finite subalgebra is equiv W algebra}
	$$
	\bigg(\big({\bf W}^k_{\hbar,G,\mu}\big)_{G(\OO)\times \IG_\hbar-{\rm fin}}\tensor{\IC[\hbar]}\IC[\hbar^{-1},\hbar]\bigg)^{\IG_\hbar}\cong {\bf W}^k_{G,\mu}
	$$
	where on the right ${\bf W}^k_{G,\mu}$ is the equivariant affine W-algebra of \cite{Arakawa:2018egx}.
\end{prop}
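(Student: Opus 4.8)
The plan is to realize both sides as Drinfeld--Sokolov reductions of chiral differential operators on $G$ and to prove that the Rees-type functor producing the left-hand side commutes with reduction. Two preliminary identifications set this up. First, $T^*G\cong G\times\gf^*$ and $S_{G,\mu}\cong G\times S_\mu$ are affine, hence so are $\JJ T^*G$ and $\JJ S_{G,\mu}$ and global sections are exact; taking $U=\JJ T^*G$ in Theorem \ref{thm:AKM chiral}(iv) and using that the reduced sheaf is $p_{\infty*}\HH^0$ then yields ${\bf W}^k_{\hbar,G,\mu}=\Gamma(\JJ S_{G,\mu},\AA^k_{\hbar,G,\mu})\cong H^0_{\rm DS,\hbar}(\DD^{ch}_{\hbar,k}(G),\chi_\mu)$. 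Second, ${\bf W}^k_{G,\mu}$ is the ordinary reduction $H^0_{\rm DS}(\DD^{ch}_{k}(G),\chi_\mu)$ of the non-$\hbar$-adic cdos on $G$ \cite{Arakawa:2018egx}. Writing $\mathcal{R}(V):=\big(V_{G(\OO)\times\IG_\hbar-{\rm fin}}\tensor{\IC[\hbar]}\IC[\hbar,\hbar^{-1}]\big)^{\IG_\hbar}$, the left-hand side of the statement is precisely $\mathcal{R}({\bf W}^k_{\hbar,G,\mu})={\rm Fil}({\bf W}^k_{\hbar,G,\mu})$, so the task reduces to the interchange $\mathcal{R}\big(H^0_{\rm DS,\hbar}(\DD^{ch}_{\hbar,k}(G),\chi_\mu)\big)\cong H^0_{\rm DS}\big(\mathcal{R}(\DD^{ch}_{\hbar,k}(G)),\chi_\mu\big)$ together with the base case $\mathcal{R}(\DD^{ch}_{\hbar,k}(G))\cong\DD^{ch}_{k}(G)$.

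I would first dispatch the base case. By the construction in Appendix \ref{app:cdos}, the $\hbar$-adic cdos differ from the ordinary cdos only by powers of $\hbar$ inserted into the $n$-th products to record the $\IG_\hbar$-grading. Passing to $G(\OO)\times\IG_\hbar$-finite vectors selects the algebraic global sections rather than their $\hbar$-adic completion, and since a $\IG_\hbar$-invariant element of the localization is uniquely a finite sum of terms $\hbar^{-d}v$ with $v$ of $\IG_\hbar$-weight $d$, the functor $\mathcal{R}$ is exactly the passage from a graded $\IC[\hbar]$-vertex algebra to its underlying filtered vertex algebra with $\hbar$ set to $1$. On the free-field generators of the cdos this specialization returns the original products, giving $\mathcal{R}(\DD^{ch}_{\hbar,k}(G))\cong\DD^{ch}_{k}(G)$; the same computation identifies $\mathcal{R}$ with the Rees correspondence invoked just before the statement.

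The substance is the commutation of $\mathcal{R}$ with reduction. The differential $\tfrac1\hbar d^{ch}_{\chi,(0)}$ is $G(\OO)$-equivariant for the right action used in the reduction, by the preceding Proposition, and it is $\IG_\hbar$-equivariant because the Kazhdan grading is arranged precisely so that $\Phi_{ch}(x_i)$ and $\chi_\mu(x_i)$ carry equal $\IG_\hbar$-weight; hence it preserves the $G(\OO)\times\IG_\hbar$-finite part of the complex $\DD^{ch}_{\hbar,k}(G)\hat{\otimes}{\rm Cl}_\hbar(\gf_{>0})$ and $\mathcal{R}$ is defined on the whole complex. Now $\IG_\hbar\cong\IG_m$ is linearly reductive, so $(-)^{\IG_\hbar}$ is exact on rational modules, while $\tensor{\IC[\hbar]}\IC[\hbar,\hbar^{-1}]$ is exact by flatness. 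Granting that passage to $G(\OO)\times\IG_\hbar$-finite vectors also commutes with the degree-zero cohomology here, the functor $\mathcal{R}$ is exact, and since the reduction is concentrated in cohomological degree zero in both the $\hbar$-adic setting (Theorem \ref{thm:AKM chiral}) and the ordinary setting \cite{Arakawa2015:local}, the interchange follows; composing with the base case proves the Proposition.

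The main obstacle is exactly this interchange of local finiteness with cohomology: one must show $\big(H^0_{\rm DS,\hbar}(\DD^{ch}_{\hbar,k}(G),\chi_\mu)\big)_{G(\OO)\times\IG_\hbar-{\rm fin}}$ equals, and not merely contains, $H^0$ of the finite subcomplex, \ie\ that no locally finite class appears only after $\hbar$-adic completion. I would resolve this by decomposing the full complex into joint $\IG_\hbar$-weight spaces. After inverting $\hbar$ a fixed invariant weight pins down a single Kazhdan--conformal degree, in which the complex is a complex of finitely generated $\IC[\hbar]$-modules carrying a smooth, hence locally finite, $G(\OO)$-action; there the commutation of finite vectors, localization and cohomology reduces to a statement about finitely generated modules over the Noetherian ring $\IC[\hbar]$ under the linearly reductive $\IG_\hbar$-action, after which one reassembles over all weights. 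Boundedness below of the Kazhdan grading on the reduction---which holds because in type $A$ the even grading attached to $\chi_\mu$ is good---is what guarantees each weight space is finitely generated and that the reassembly produces the separated, \emph{non-completed} object ${\rm Fil}({\bf W}^k_{\hbar,G,\mu})$ rather than its completion.
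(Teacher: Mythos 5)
Your overall architecture---reduce to the base case $\mathcal{R}(\DD^{ch}_{\hbar,k}(G))\cong\DD^{ch}_k(G)$ and then commute the Rees-type functor $\mathcal{R}$ with Drinfeld--Sokolov reduction---matches the paper's in outline, and your identification of the main obstacle is exactly right. But the step you ``grant'' and then sketch is where the argument breaks. Your proposed resolution decomposes the complex into joint $\IG_\hbar$-weight spaces and asserts that a fixed weight gives a complex of finitely generated $\IC[\hbar]$-modules. This is false before passing to $G(\OO)$-finite vectors: fixing the $\IG_\hbar$-weight (Kazhdan weight plus $\hbar$-degree) does not pin down the conformal degree, and even the Kazhdan-weight-zero piece of $\DD^{ch}_{\hbar,k}(G)$ contains all of $\OO(G)$, which is infinite-dimensional and only becomes a sum of finite pieces after decomposing under the two-sided $G$-action. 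So finite generation over $\IC[\hbar]$ is only available \emph{after} restricting to the $G(\OO)$-finite part---which is precisely the operation whose exactness you are trying to establish. The argument is circular at this point. Two further issues compound it: $G(\OO)$ is not reductive, so passage to locally finite vectors is not exact for formal reasons; and the $\hbar$-adic complex is completed, so it is a product rather than a direct sum over weights, and its cohomology does not automatically split as the product of weight-space cohomologies.

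The paper avoids this interchange problem entirely. Instead of proving $\mathcal{R}$ is exact, it constructs an explicit injection of complexes $C^{\bullet}_{\rm DS}(\DD^{ch}_{k}(G),\chi_\mu)\hookrightarrow {\rm Fil}\big(C^{\bullet}_{\rm DS}(\DD^{ch}_{\hbar,k}(G),\chi_\mu)\big)$, $v\mapsto \hbar^{-|v|}v$ with $|v|$ the Kazhdan weight, pulls back the $\hbar$-filtration to the (conjugate of the) Li filtration on the source, and checks that the induced map on $H^0$ of associated graded objects is an embedding $\OO(\JJ S_{G,\mu})\to\OO(\JJ S_{G,\mu})$ of vertex Poisson algebras, hence an isomorphism; the map of complexes is therefore a quasi-isomorphism. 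In other words, the quantum comparison is reduced to the already-known classical statement that both reductions have associated graded $\OO(\JJ S_{G,\mu})$, rather than to an exactness property of the locally-finite-vectors functor. If you want to salvage your route, you would need to first restrict to the $G(\OO)\times\IG_\hbar$-finite subcomplex, prove separately that its cohomology maps isomorphically onto the finite part of the cohomology of the completed complex, and only then invoke reductivity of $\IG_\hbar$ and flatness of localization; the first of these steps is essentially as hard as the proposition itself, which is why the filtration argument is the efficient path.
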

\begin{proof}
We follow closely the argument of \cite{Dodd:2025LocalWalg}. Note that for $\mu=[1^n]$, this is the statement for cdos on $G$, which we know to be true by Lemma \ref{lem:Rees for h-adic cdos}.

Now suppose $\mu> [1^n]$ and recall the complex,
$$
C^\bullet_{\rm DS,\hbar}(\DD^{ch}_{\hbar,k}(G),\chi_\mu)=\DD^{ch}_\hbar(G) \otimes {\rm Cl}_\hbar(\nf_{\chi_\mu})
$$
Since the left and right $G$ moment maps commute, the $G$-finite subalgebra gives rise to a subcomplex. Moreover, by equipping the complex with the Kazhdan $\IG_\hbar$-action---under which the differential is $\IG_\hbar$-equivariant---we get the subcomplex $C^\bullet_{\rm DS,\hbar}(\DD_{\hbar,k}^{ch}(G),\mu)_{G(\OO)\times \IG_\hbar-fin}$. 

Write 
$$
{\rm Fil}\big(C^{\bullet}_{\rm DS}(\DD^{ch}_{\hbar,k}(G),\chi_\mu)\big) \equiv \big(C^\bullet_{\rm DS}(\DD^{ch}_{\hbar,k}(G),\chi_\mu)_{G(\OO)\times \IG_\hbar-fin}\tensor{\IC[\hbar]}\IC[\hbar,\hbar^{-1}]\big)^{\IG_\hbar}~.
$$
Recall from \cite{Arakawa:2018egx} that the equivariant affine W-algebra ${\bf W}^k_{G,\mu}$ is defined as the cohomology of the complex $C^\bullet_{\rm DS}(\DD^{ch}_{k}(G),\chi_\mu)$, the non $\hbar$-adic Drinfeld--Sokolov reduction complex. See Section 6 of \textit{loc.\ cit.} for more details.
We have an inclusion of $\ik$-vertex algebras, $	C^{\bullet}_{\rm DS}(\DD^{ch}_{k}(G),\chi_\mu)\hookrightarrow {\rm Fil}\big(C^{\bullet}_{\rm DS}(\DD^{ch}_{\hbar,k}(G),\chi_\mu)\big)$, which sends 
some $v\in C^\bullet(\DD^{ch}_k(G),\chi_\mu)$ to 
\begin{equation}
v\mapsto \hbar^{-|v|} v~,
\end{equation}
where $| \cdot |$ is the weight of $v$ under the Kazhdan grading. 

By pulling back the natural filtration on ${\rm Fil}\big(C^{\bullet}_{\rm DS}(\DD^{ch}_{\hbar,k}(G),\chi_\mu)\big)$ we obtain an ascending filtration on $C^\bullet_{\rm DS}(\DD_k^{ch}(G),\chi_\mu)$. 

This filtration is conjugate to the canonical, descending Li filtration on $C^\bullet_{\rm DS}(\DD_k^{ch}(G),\chi_\mu)$. Taking cohomology and passing to associated graded we obtain an embedding of vertex Poisson algebras
\begin{equation}
	\OO(\JJ S_{G,\mu})\rightarrow \OO(\JJ S_{G,\mu})~,
\end{equation}
which must be an isomorphism. Therefore, the embedding $	C^{\bullet}_{\rm DS}(\DD^{ch}_{k}(G),\chi_\mu)\hookrightarrow {\rm Fil}\big(C^{\bullet}_{\rm DS}(\DD^{ch}_{\hbar,k}(G),\chi_\mu)\big)$ must be a quasi-isomorphism, and ${\rm Fil}({\bf W}^k_{\hbar,G,\mu})\cong {\bf W}^k_{G,\mu}$ as desired.
\end{proof}

\begin{lem}\label{lem:V and W are dual pairs}
	The subalgebras $V ^{k-2h^\vee}(\gf)\hookrightarrow {\bf W}^k_{ G,\mu}$ and $\WW^k_{ \mu}(\gf)\hookrightarrow {\bf W}^k_{ G,\mu}$ are dual pairs, \ie,
	\begin{equation}
		\WW^k_{ \mu}(\gf) = {\rm Comm}_{{\bf W}^k_{ G,\mu}}\big(V^{k-2h^\vee}(\gf)\big)~, \text{ and }~ V ^{k-2h^\vee} (\gf)= {\rm Comm}_{{\bf W}^k_{ G,\mu}}\big(\WW^k_{ \mu}(\gf)\big)~.  
	\end{equation}
	Moreover, in the $\hbar$-adic setting we have
	\begin{equation}
		\WW^k_{\hbar, \mu}(\gf) = {\rm Comm}_{{\bf W}^k_{\hbar, G,\mu}}\big(V^{k-2h^\vee}_\hbar(\gf)\big)~, \text{ and }~ V_\hbar ^{k-2h^\vee} (\gf)= {\rm Comm}_{{\bf W}^k_{\hbar, G,\mu}}\big(\WW^k_{\hbar, \mu}(\gf)\big)~.  
	\end{equation}
\end{lem}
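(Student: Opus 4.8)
The plan is to reduce both commutant statements to a single \emph{classical} dual pair in the vertex Poisson algebra $\OO(\JJ S_{G,\mu})$, and then transport the result to the quantized and $\hbar$-adic settings by a filtered lifting argument. First I note that one inclusion in each of the four equalities is formal: since the right $G(\OO)$-action commutes with the left $N_{\chi_\mu}(\OO)$-action used in the reduction, the right currents $V^{k-2h^\vee}_\hbar(\gf)$ survive DS reduction and commute with $\WW^k_{\hbar,\mu}(\gf) = H^0_{\rm DS}(\Phi_L)$; hence $\WW^k_{\hbar,\mu}(\gf) \subseteq {\rm Comm}(V^{k-2h^\vee}_\hbar(\gf))$ and $V^{k-2h^\vee}_\hbar(\gf) \subseteq {\rm Comm}(\WW^k_{\hbar,\mu}(\gf))$, and identically in the non-$\hbar$-adic case. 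It therefore remains to prove the reverse inclusions.

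\textbf{The classical dual pair.} The essential input is that, in $\OO(\JJ S_{G,\mu})$, the image $R$ of the right chiral comoment map $\OO(\JJ\gf^*)\to\OO(\JJ S_{G,\mu})$ and the $G$-invariant subalgebra $\OO(\JJ S_\mu)$ are mutual vertex Poisson commutants. The equality ${\rm Comm}(R) = \OO(\JJ S_\mu)$ is clean: the nonnegative modes $(\Phi_R\xi)_{(n)}$, $\xi\in\gf$, $n\ge 0$, generate the action of $\gf[\![t]\!] = {\rm Lie}\,G(\OO)$, so ${\rm Comm}(R) = \OO(\JJ S_{G,\mu})^{G(\OO)} = \OO(\JJ(S_{G,\mu}/G)) = \OO(\JJ S_\mu)$, using the free quotient $S_{G,\mu}/G \cong S_\mu$. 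The reverse equality ${\rm Comm}(\OO(\JJ S_\mu)) = R$ is the hard direction: here I would invoke the finite geometric results of \cite{BN:finite} (Theorem \ref{thm:BN finite main}), which exhibit $S_\mu \leftarrow S_{G,\mu} \to \gf^*$ as a symplectic dual pair — with $\ker d\pi = T(G\text{-orbit})$ and $\ker d\Phi_R$ symplectic-orthogonal complements — and hence yield the \emph{finite} mutual-commutant statement $\OO(S_\mu) = {\rm Comm}(\Phi_R^*\OO(\gf^*))$. I then promote this to arc spaces, using that the arc functor preserves the flatness of $\Phi_R$ and the freeness of the $G$-action, so that the pointwise symplectic-orthogonality condition, and with it the mutual-commutant property, lifts to the vertex Poisson structure on $\OO(\JJ S_{G,\mu})$.

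\textbf{The $\hbar$-adic lifting.} Since ${\bf W}^k_{\hbar,G,\mu}$ is an almost commutative, flat, $\hbar$-adically complete quantization of $\OO(\JJ S_{G,\mu})$, with the two subalgebras reducing mod $\hbar$ to $\OO(\JJ S_\mu)$ and $R$ respectively, I lift the classical statement by induction in powers of $\hbar$. Given $v\in{\rm Comm}(V^{k-2h^\vee}_\hbar(\gf))$, vanishing of the bracket forces $\bar v := v \bmod \hbar$ to lie in ${\rm Comm}(R) = \OO(\JJ S_\mu)$; choosing a lift $w_0\in\WW^k_{\hbar,\mu}(\gf)$, flatness gives $v - w_0 = \hbar v_1$ with $v_1$ again in the commutant, and recursion together with $\hbar$-adic completeness of $\WW^k_{\hbar,\mu}(\gf)$ yields $v = \sum_i \hbar^i w_i \in \WW^k_{\hbar,\mu}(\gf)$. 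The same argument with the roles reversed, using the classical equality ${\rm Comm}(\OO(\JJ S_\mu)) = R$, proves $V^{k-2h^\vee}_\hbar(\gf) = {\rm Comm}(\WW^k_{\hbar,\mu}(\gf))$.

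\textbf{The non-$\hbar$-adic case and the main obstacle.} For the ordinary vertex algebras I would run the identical induction along the Li filtration, whose associated graded is precisely the vertex Poisson algebra $\OO(\JJ S_{G,\mu})$ by the strict chiral quantization property, with the same classical dual pair as input; alternatively one descends from the $\hbar$-adic statement via the Rees correspondence of Proposition \ref{prop:finite subalgebra is equiv W algebra}, checking that passage to $G(\OO)\times\IG_\hbar$-finite vectors and localization in $\hbar$ preserve mutual commutants. I expect the main obstacle to be the hard direction ${\rm Comm}(\OO(\JJ S_\mu)) = R$ of the classical dual pair, in particular promoting the finite symplectic-dual-pair commutant to the arc space without the chiral commutant acquiring spurious higher-derivative terms; the remaining steps are then formal consequences of flatness, completeness, and the filtered lifting mechanism.
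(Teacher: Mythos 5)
Your route is genuinely different from the paper's, and it has a gap at its central step. The paper does not attempt a classical-to-quantum argument at all: it simply quotes \cite[Proposition 6.5]{Arakawa:2018egx} for the equality $\WW^k_\mu = {\rm Comm}_{{\bf W}^k_{G,\mu}}(V^{k-2h^\vee}(\gf))$, and then obtains the reverse commutant from \cite[Proposition 6.6]{Arakawa:2018egx}, which gives a filtration of ${\bf W}^k_{G,\mu}$ with associated graded $\bigoplus_{\lambda\in\Lambda^+(\GL_N)} H^0_{\rm DS}(\IV_\lambda)\otimes D(\IV_{\lambda^*})$ as a $\WW^k_\mu\otimes\widehat{\gf}_{k-2h^\vee}$-module; the commutant of $\WW^k_\mu$ in the associated graded is then visibly $1\otimes D(\IV_0)\cong V^{k-2h^\vee}(\gf)$, and compatibility of the inclusion with the filtration upgrades this to the ungraded statement. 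The $\hbar$-adic case is deduced afterwards by completing and invoking Proposition \ref{prop:finite subalgebra is equiv W algebra}. Your formal inclusions, your $\hbar$-adic induction (surjectivity of $\WW^k_{\hbar,\mu}\to\OO(\JJ S_\mu)$ mod $\hbar$ plus flatness and completeness), and your computation ${\rm Comm}(R)=\OO(\JJ S_{G,\mu})^{G(\OO)}=\OO(\JJ S_\mu)$ are all sound, and if the classical input were in place your argument would be more self-contained than the paper's, re-proving rather than importing Arakawa's Proposition 6.5.

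The gap is exactly where you flag it, and it is not a technicality: the equality ${\rm Comm}_{\OO(\JJ S_{G,\mu})}(\OO(\JJ S_\mu)) = R$ is asserted, not proven. Theorem \ref{thm:BN finite main}, which you cite for it, says nothing about the dual pair $S_\mu \leftarrow S_{G,\mu}\to\gf^*$; it concerns the Hamiltonian $\IG_\alpha$-action and the IHR chart, so the finite-dimensional mutual-commutant statement is itself unsubstantiated as written. More seriously, even granting the finite statement, the passage to arc spaces is precisely the nontrivial content: the vertex Poisson commutant of $\OO(\JJ S_\mu)$ inside $\OO(\JJ S_{G,\mu})$ is cut out by the vanishing of all $_{(n)}$-products, $n\ge 0$, on the full arc algebra, and there is no general principle guaranteeing that this commutant equals $\JJ$ of the finite Poisson commutant --- the arc-space commutant can a priori contain elements with no finite-dimensional shadow, and ``pointwise symplectic orthogonality lifts'' is not an argument. (Note also that the symplectic form on $S_{G,\mu}\cong G\times S_\mu$ is not of product type, so the bracket genuinely mixes the two factors and the computation cannot be done factorwise.) Establishing this classical arc-space dual pair is comparable in difficulty to the module-theoretic decomposition the paper imports from \cite{Arakawa:2018egx}; until it is supplied, the hard inclusion ${\rm Comm}(\WW^k_{\hbar,\mu})\subseteq V^{k-2h^\vee}_\hbar(\gf)$ remains unproven.
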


\begin{proof}
	The first equality is \cite[Proposition 6.5]{Arakawa:2018egx}. Therefore, we have an inclusion, $V^{k-2h^\vee}(\gf)\subset {\rm Comm}_{{\bf W}^k_{ G,\mu}}\big(\WW^k_{ \mu}\big)$.

	Now, from Proposition 6.6 of \textit{loc.\ cit.}, we see that ${\bf W}^k_{G,\mu}$ has a filtration whose associative graded is 
	\begin{equation}
	\bigoplus_{\lambda\in {\rm \Lambda^+({\rm GL}_N)} } 	H^0_{\rm DS}(\IV_\lambda)\otimes D(\IV_{\lambda^*})
	\end{equation}
	as $\WW^k_{\mu}\otimes \widehat{\gf}_{k-2h^\vee}$ modules. As graded $\widehat{\gf}_{k-2h^\vee}$ modules, we identify 
	\begin{equation}\label{eq:sizeofcommutant}
	{\rm Comm}_{{\bf W}^k_{ G,\mu}}\big(\WW^k_{ \mu}(\gf)\big)\cong 1\otimes D(\IV_0)\cong V^{k-2h^\vee}(\gf)~.
	\end{equation}
	The inclusion $V^{k-2h^\vee}(\gf)\hookrightarrow {\rm Comm}_{{\bf W}^k_{ G,\mu}}\big(\WW^k_{ \mu}(\gf)\big)$ is compatible with this filtration and upon taking associated graded we have an isomorphism from \eqref{eq:sizeofcommutant}. Thus, we see that $V^{k-2h^\vee}(\gf)\cong {\rm Comm}_{{\bf W}^k_{ G,\mu}}\big(\WW^k_{ \mu}(\gf)\big)$ as desired.

	The $\hbar$-adic result then follows by completing with respect to the filtration and using Proposition \ref{prop:finite subalgebra is equiv W algebra}.
\end{proof}

\begin{prop}\label{prop:rigidity for equiv W algebras}
	Any quantization of $\OO(\JJ S_{G,\mu})$ is isomorphic to ${\bf W}^k_{\hbar,G,\mu}$ for some $k\in \IC$.
\end{prop}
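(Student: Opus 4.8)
The plan is to apply the classification of graded quantizations from Theorem \ref{thm:space of chiral quantization} to $Y = S_{G,\mu}$ and to match the resulting torsor against the level family $\{{\bf W}^k_{\hbar,G,\mu}\}_{k \in \IC}$. The slice $S_{G,\mu}$ is an affine symplectic variety, and a graded quantization exists by our construction of $\AA^k_{\hbar,G,\mu}$, so the only hypothesis left to check is that $H^3_{\rm dR}(S_{G,\mu})$ is concentrated in a single $\IG_\hbar$-weight. I would obtain this by computing the cohomology outright.

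Using the isomorphism $S_{G,\mu} \cong G \times S_\mu$, the K\"unneth formula, and the fact that the Slodowy slice $S_\mu$ is an affine space — it retracts onto the fixed point $\chi_\mu$ under the Kazhdan contraction \eqref{eq:Kazhdan}, hence has trivial reduced cohomology — this reduces to $H^3_{\rm dR}(G)$. For $G = GL_N$ with $N \geq 2$ one has $H^\bullet_{\rm dR}(G) \cong \Lambda(\xi_1, \xi_3, \dots, \xi_{2N-1})$ with $\deg \xi_{2i-1} = 2i-1$, so $H^3_{\rm dR}(S_{G,\mu}) \cong \IC$ is one-dimensional. In particular the concentration hypothesis is automatic (a one-dimensional graded space lives in a single weight), and Theorem \ref{thm:space of chiral quantization} identifies the moduli space $M$ of graded quantizations of $\OO(\JJ S_{G,\mu})$ with a torsor for $\IC$.

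It then remains to show the level map $\IC \to M$, $k \mapsto [{\bf W}^k_{\hbar,G,\mu}]$, is surjective. The strategy is to argue it is a non-constant morphism of affine lines, which forces surjectivity since $\IC$ is algebraically closed. Non-constancy is the easy half: by the dual-pair structure of Lemma \ref{lem:V and W are dual pairs}, the commutant of $\WW^k_{\hbar,\mu}$ inside ${\bf W}^k_{\hbar,G,\mu}$ is the current algebra $V^{k-2h^\vee}_\hbar(\gf)$, whose level is read off from the order-$\hbar^2$ term of the current OPE and is thus an invariant of any $\IG_\hbar$-graded, $\IC\Ph$-linear isomorphism; hence ${\bf W}^k_{\hbar,G,\mu} \cong {\bf W}^{k'}_{\hbar,G,\mu}$ forces $k = k'$.

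The step I expect to be the main obstacle is making the surjectivity argument rigorous: controlling the passage from the algebraic parameter $k$ to the abstract deformation-theoretic torsor $M$ and confirming the family sweeps out all of $M$ rather than a proper affine subspace. The cleanest route is to establish this in the base case $\mu = [1^N]$, where $S_{G,[1^N]} = T^*G$ and the family is precisely the chiral differential operators $\DD^{ch}_{\hbar,G,k}$, for which completeness is the known classification of cdos on $G$ by the level; one then propagates this along Drinfeld--Sokolov reduction, using that the reduction induces an isomorphism $H^3_{\rm dR}(S_{G,\mu}) \cong H^3_{\rm dR}(T^*G)$ — both being $H^3_{\rm dR}(G)$ — and is compatible with the $\IG_\hbar$-gradings, so that it carries the full torsor of quantizations of $\OO(\JJ T^*G)$ onto the full torsor of quantizations of $\OO(\JJ S_{G,\mu})$.
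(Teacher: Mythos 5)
Your proposal follows the same route as the paper's own proof: invoke Theorem \ref{thm:space of chiral quantization} after identifying $H^3_{\rm dR}(S_{G,\mu})\cong H^3_{\rm dR}(G)\cong \IC$ (via $S_{G,\mu}\cong G\times S_\mu$ and contractibility of the Slodowy slice), and observe that $\{{\bf W}^k_{\hbar,G,\mu}\}_{k\in\IC}$ is a one-parameter family inside this one-dimensional torsor. You are in fact more careful than the paper, whose two-line proof simply asserts that the family exhausts the torsor; your analysis of non-constancy of the level map via the dual-pair invariant of Lemma \ref{lem:V and W are dual pairs}, and the reduction of surjectivity to the known classification of cdos on $G$ in the base case $\mu=[1^N]$, is a genuine refinement of the same argument (though the propagation of completeness along Drinfeld--Sokolov reduction would still need to be written out).
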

\begin{proof}
	From Theorem \ref{thm:space of chiral quantization}, we see that the dimension of the moduli space of graded quantizations is a torsor for $H^3_{\rm dR}(S_{G,\mu})=\IC$. The equivariant W-algebras, ${\bf W}^k_{\hbar,G,\mu}$, constitute a one-parameter family of quantizations. Thus, any quantization is isomorphic to ${\bf W}^k_{\hbar,G,\mu}$, for some $k\in\IC$.
\end{proof}

From Lemma \ref{lem:V and W are dual pairs} and Proposition 6.5 of \cite{Arakawa:2018egx}, we note that
\begin{equation}
	\WW^k_{\hbar,\mu}\cong ({\mathbf W}^k_{G,\hbar,\mu})^{G(\OO)}
\end{equation}
To decomplete and recover the non-$\hbar$-adic $\WW$-algebra we can use the usual trick of passing to the asymptotic algebra and taking $\IG_\hbar$-invariants
\begin{equation}
	\WW^k_\mu \cong \big(\WW^k_{\hbar,\mu}\tensor{\ik\Ph}\ik(\!(\hbar)\!) \big)^{\IG_\hbar}~,
\end{equation}
as in \cite{Arakawa2015:local}.


The other ingredient in inverse Hamiltonian reduction is a quantization of $\JJ\open{T^*}\IG_\alpha$, as introduced in Section \ref{ssec:finite inv ham red}.

The translation group $\IG_\alpha \cong \IA^{\ell(\alpha)}$ has an obvious chiral quantization given by $\hbar$-adic chiral differential operators, \ie, the $\hbar$-adic $\beta\gamma$-system of rank $\ell(\alpha)$. We may localise the $\beta\gamma$-system on $\JJ \open{T^*}\IG_\alpha$, to obtain $\DD_{\hbar,\alpha}^{ch}$, whose global sections $\DD_{\hbar}^{ch}(\JJ\open{T^*}\IG_\alpha)$ are strongly generated by $p,e^\pm,\beta_i,\gamma_i$ subject to the relation
\begin{equation}
	:e^+e^-:(z) = \id
\end{equation}
and with singular OPEs given by
\begin{equation}
	p(z) e^\pm(w) \sim \frac{\pm \hbar e^\pm(w)}{z-w}~, \text{ and } ~ \beta_i(z) \gamma_j(w) \sim \frac{\delta_{ij}\hbar}{z-w}~.
\end{equation}

\begin{prop}\label{prop:cdos are unique}
	The microlocal cdos $ \DD^{ch}_{\hbar}(\JJ\open{T^*\IG_\alpha})$ is the unique quantization of $\JJ\open{T^*}\IG_\alpha$ 
\end{prop}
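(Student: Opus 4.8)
The plan is to apply the rigidity result Theorem \ref{thm:space of chiral quantization} to the affine symplectic variety $Y \coloneqq \open{T^*}\IG_\alpha$. That theorem asserts that, once a graded quantization exists and $H^3_{\rm dR}(Y)$ is concentrated in a single $\IG_\hbar$-weight, the isomorphism classes of graded quantizations of $\OO(\JJ Y)$ form a torsor under $H^3_{\rm dR}(Y)$. Hence it suffices to exhibit one graded quantization and to show $H^3_{\rm dR}(Y) = 0$, whereupon the torsor collapses to a single point and uniqueness follows.

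Existence is provided by the microlocal cdos $\DD^{ch}_\hbar(\JJ\open{T^*}\IG_\alpha)$ itself, so I would first check that it is a graded quantization. Concretely, reducing the OPEs of $p, e^{\pm}, \beta_i, \gamma_i$ modulo $\hbar$ and comparing the resulting $\lambda$-brackets with the Poisson structure attached to the symplectic form $\omega = \tfrac{1}{e} dp\wedge de + \sum_i d\beta_i\wedge d\gamma_i$ of Theorem \ref{thm:BN finite main} shows that $\DD^{ch}_\hbar/\hbar \cong \OO(\JJ Y)$ as vertex Poisson algebras; the Kazhdan $\IG_\hbar$-action inherited from $S_{G,\mu}$, which scales $\hbar$ with weight one, supplies the grading. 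This is the only step demanding genuine verification.

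It then remains to compute $H^3_{\rm dR}(Y)$. From the explicit description $\open{T^*}\IG_\alpha \cong \IA^{2\,{\rm len}\,\alpha - 1}\times\IA^\times$, the variety $Y$ is homotopy equivalent to $\IA^\times$, whose algebraic de Rham cohomology (equivalently, the singular cohomology of $\IC^\times \simeq S^1$) is $\IC$ in degrees $0$ and $1$ and vanishes in all higher degrees. By the Künneth formula the factor $\IA^{2\,{\rm len}\,\alpha-1}$ contributes only in degree zero, so $H^i_{\rm dR}(Y) = 0$ for every $i \ge 2$; in particular $H^3_{\rm dR}(Y) = 0$, which is trivially concentrated in a single $\IG_\hbar$-weight.

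Putting these together, the moduli space of graded quantizations of $\OO(\JJ\open{T^*}\IG_\alpha)$ is a nonempty torsor under the trivial group, hence a single point, so every graded quantization is isomorphic to $\DD^{ch}_\hbar(\JJ\open{T^*}\IG_\alpha)$. There is no serious obstacle here: the cohomological input is the easy computation $H^3_{\rm dR}(\open{T^*}\IG_\alpha)=0$, and the only point requiring care is confirming that the $\beta\gamma e^{\pm}p$-system really is a graded quantization, i.e.\ that its semiclassical limit reproduces the intended symplectic geometry, which is what anchors the torsor and makes the rigidity argument applicable.
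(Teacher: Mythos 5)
Your proposal is correct and follows exactly the paper's own argument: apply Theorem \ref{thm:space of chiral quantization} and observe that $H^3_{\rm dR}(\open{T^*}\IG_\alpha)=0$ since $\open{T^*}\IG_\alpha\cong \IA^{2\,{\rm len}\,\alpha-1}\times\IA^\times$. The extra care you take in checking that the $\beta\gamma e^{\pm}p$-system is indeed a graded quantization anchoring the torsor is a reasonable elaboration of what the paper leaves implicit.
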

\begin{proof}
	This follows from Theorem \ref{thm:space of chiral quantization}, noting that $H^3_{\rm dR}(\open{T^*}\IG_\alpha)=0$.
\end{proof}

The Hamiltonian action of $\IG_\alpha(\OO)$ on $\JJ \open{T^*} \IG_\alpha$ lifts to a strong $\IG_\alpha(\OO)$-action on $\DD_{\hbar}^{ch}(\JJ\open{T^*}\IG_\alpha)$ with chiral comoment map
$	\Phi: V(\Gf_\alpha) \rightarrow \DD_{\hbar}^{ch}(\JJ\open{T^*}\IG_\alpha)$ given by the inclusion of the subalgebra generated by the $e,\beta_i$. 

\begin{prop}\label{prop:cdos are unique strong}
	The localized cdos $\DD_{\hbar}^{ch}(\JJ\open{T^*}\IG_\alpha)$ with the given comoment map are the unique strongly $\IG_\alpha(\OO)$-equivariant quantization of $\OO(\JJ \open{T^*}\IG_\alpha)$. Any other quantization is isomorphic to this one by a strongly $\IG_\alpha$-equivariant isomorphism.
\end{prop}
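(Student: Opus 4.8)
The plan is to deduce the statement from two results already in hand: the uniqueness of the bare quantization, Proposition \ref{prop:cdos are unique}, and the uniqueness of strong equivariant lifts, Proposition \ref{prop:lifting strong actions}. The real content is to verify that the hypotheses of Proposition \ref{prop:lifting strong actions} hold for $Y=\open{T^*}\IG_\alpha$ with its Hamiltonian $\IG_\alpha$-action, and then to combine the two propositions.

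First I would check the three conditions. Condition (i) is immediate, as $\IG_\alpha$ is the additive group of translations on $\IA^{{\rm len}\,\alpha}$, hence unipotent and abelian. For condition (ii), I would note that the cotangent lift of the free translation action is free on $T^*\IG_\alpha$, hence on the open subset $\open{T^*}\IG_\alpha$, and that by functoriality this induces a free action of $\IG_\alpha(\OO)$ on $\JJ\open{T^*}\IG_\alpha$. For condition (iii), I would recall from Section \ref{ssec:finite inv ham red} that in the coordinates $\beta_i,\gamma_i,p,e,e^{-1}$ the moment map $\Phi\colon\open{T^*}\IG_\alpha\to\Gf_\alpha^*$ is the projection onto the $\beta_i,e$ coordinates; since $e$ is invertible this is a smooth submersion onto $\open{\Gf}\vphantom{\open{\Gf}}^*_\alpha$, which is an open subvariety of $\Gf_\alpha^*$, and because $\JJ(-)$ sends smooth morphisms to smooth morphisms, the chiral moment map $\Phi_\infty$ is smooth.

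With the hypotheses verified, I would combine the inputs as follows. By Proposition \ref{prop:cdos are unique} there is, up to isomorphism, a unique graded quantization of $\OO(\JJ\open{T^*}\IG_\alpha)$, namely $\DD^{ch}_{\hbar}(\JJ\open{T^*}\IG_\alpha)$. Given any strongly $\IG_\alpha(\OO)$-equivariant quantization, forgetting its equivariant structure yields a graded quantization, which is therefore isomorphic to the cdos; Proposition \ref{prop:lifting strong actions} then shows the strong equivariant structure is unique up to strongly $\IG_\alpha(\OO)$-equivariant isomorphism. Since the cdos equipped with the comoment map $\Phi\colon V(\Gf_\alpha)\to\DD^{ch}_{\hbar}(\JJ\open{T^*}\IG_\alpha)$ recorded above is one such strong lift, it is the unique strongly equivariant quantization, and any other is related to it by a strongly $\IG_\alpha(\OO)$-equivariant isomorphism.

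I expect the main obstacle to be condition (iii), the smoothness of $\Phi_\infty$: smoothness of a finite-dimensional moment map does not in general pass to arc spaces for an arbitrary morphism, but it does for a smooth one, so the crux is to observe that $\Phi$ is not merely flat but a submersion. In the present case this is transparent, since the slice is the explicit space $\open{T^*}\IG_\alpha$ and $\Phi$ is a linear coordinate projection, so no further analysis is required.
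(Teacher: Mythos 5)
Your proposal is correct and follows exactly the paper's own argument: the paper's proof is a one-line deduction from Propositions \ref{prop:cdos are unique} and \ref{prop:lifting strong actions}, citing freeness of the $\IG_\alpha(\OO)$-action and smoothness of the comoment map. Your more detailed verification of the three hypotheses (unipotent abelian group, free action, smooth moment map as a coordinate projection) simply fills in the routine checks the paper leaves implicit.
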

\begin{proof}
	Follows from Propositions \ref{prop:cdos are unique} and \ref{prop:lifting strong actions}, since the $\IG_\alpha(\OO)$ action is free and the comoment map is smooth.
\end{proof}


\section{Inverse Hamiltonian reduction for arc spaces}


Fix $H$ to be an abelian Lie group with Lie algebra $\hf$. Let $Y$ be a symplectic variety with a Hamiltonian action of $H$ and moment map $\Phi:Y\rightarrow \hf^*$. Let $\chi\in\hf^*$ be a regular character and consider the Hamiltonian reduction $Y\red{\chi} H$.

\begin{dfn}
	An inverse Hamiltonian reduction (IHR) for the reduction $\JJ Y\red{\chi} H(\OO)$ is a tuple $(U, f)$ where
	\begin{enumerate}[(i)]
		\item $U\subset \JJ \hf^*$ is an open vertex Poisson subscheme, such that $\chi\in {\rm ev}(U)\subset \hf^*$.
		\item $f: \JJ \big(H \times U\big)\times \JJ \big(Y\red{\chi}H\big) \xrightarrow{\sim} \JJ Y \fibre{\Phi_\infty,\JJ\hf^*} U$ is a strongly $ H(\OO)$-equivariant isomorphism of vertex Poisson schemes.
	\end{enumerate}
\end{dfn}

\begin{prop}
	The IHR data for $Y$ defines IHR data for $\JJ Y$.
\end{prop}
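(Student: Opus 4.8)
The plan is to obtain the arc-level IHR datum by applying the jet functor $\JJ$ to the finite-dimensional IHR datum and to check that $\JJ$ transports each of the required structures. Thus I would start from a finite IHR datum, namely a pair $(U_0, f_0)$ with $U_0 \subset \hf^*$ an open Poisson subscheme containing $\chi$ and $f_0 : (H \times U_0) \times (Y \red{\chi} H) \xrightarrow{\sim} Y \fibre{\Phi, \hf^*} U_0$ a Hamiltonian $H$-equivariant isomorphism of symplectic varieties, and set $U := \JJ U_0$ and $f := \JJ f_0$.

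To verify condition (i) I would note that an arc $\ID \to \hf^*$ lands in the open subscheme $U_0$ precisely when its value at the closed point of $\ID$ does, so that $\JJ U_0 = \ev^{-1}(U_0)$ is open in $\JJ \hf^*$; its vertex Poisson structure is the localization of $\OO_{\JJ \hf^*}$ along this open, as in \cite{Arakawa2015:local}. Since $\ev : \JJ U_0 \to U_0$ is surjective, $\ev(U) = U_0 \ni \chi$.

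For condition (ii) the key point is that $\JJ$ is right adjoint to the functor $(-)\times \ID$ (by the defining property $\Hom(S, \JJ Y) = \Hom(S \times \ID, Y)$), hence preserves limits, in particular products and fibre products. Consequently $\JJ\big((H \times U_0) \times (Y \red{\chi} H)\big) \cong H(\OO) \times U \times \JJ(Y \red{\chi} H)$ and, since $\Phi_\infty = \JJ \Phi$, also $\JJ\big(Y \fibre{\Phi, \hf^*} U_0\big) \cong \JJ Y \fibre{\Phi_\infty, \JJ \hf^*} U$, so the source and target of $f = \JJ f_0$ are exactly those in the definition. Functoriality of $\JJ$ turns the Poisson isomorphism $f_0$ into an isomorphism of vertex Poisson schemes, since the vertex Poisson bracket on $\OO(\JJ(-))$ is generated from the Poisson bracket on $\OO(-)$ by $\del$-sesquilinearity and the quasi-Leibniz rule. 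Finally, the $H(\OO) = \JJ H$-actions on both sides are the images under $\JJ$ of the $H$-actions, so $\JJ f_0$ is $H(\OO)$-equivariant, and the chiral comoment maps are $\Phi^\#_\infty = \JJ \Phi^\#$, whose intertwining is the image under $\JJ$ of the moment-map intertwining recorded by the Hamiltonian equivariance of $f_0$; this is exactly strong $H(\OO)$-equivariance.

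I expect the genuinely non-formal points to be the two preservation statements in the last paragraph: that $\JJ$ carries open immersions to open immersions (so that $U$ is open and the fibre-product descriptions hold on the nose) and that the chiral comoment data of $\JJ f_0$ is obtained by applying $\JJ$ to that of $f_0$. Both reduce to the explicit description of arcs and the limit-preservation of $\JJ$, and once they are in hand the proposition follows formally from functoriality.
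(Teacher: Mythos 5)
Your proposal is correct and matches the paper's argument, which simply states that the result ``follows from functoriality of $\JJ$''; your write-up is a careful unpacking of exactly that one-line proof (limit-preservation of $\JJ$, the identification $\JJ U_0 = \ev^{-1}(U_0)$ for opens, and transport of the Hamiltonian structure to the chiral comoment data). Nothing further is needed.
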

\begin{proof}
	Follows from functoriality of $\JJ$.
\end{proof}

\begin{prop}\label{prop:IHR implies reduction commutes with quantisation}
Suppose $H$ is unipotent and $(U,f)$ is IHR data for $\JJ Y\red{\chi} \JJ H$ and let $\AA$ be a strongly $H(\OO)$-equivariant quantization of $\JJ Y$. Then,
\begin{enumerate}[(i)]
	\item $\AA|_{\Phi_\infty^{-1}(U)}\red{\chi} H(\OO) \cong \AA\red{\chi} H(\OO)$
	\item $\AA\red{\chi} \JJ H$ is a quantization of $\JJ ( Y\red{\chi} H)$.
\end{enumerate}
\end{prop}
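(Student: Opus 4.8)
The plan is to establish (i) by a support argument and then to deduce (ii) by combining (i) with the general behaviour of Drinfeld--Sokolov reduction under quantization recorded in Theorems \ref{thm:AKM vertex Poisson} and \ref{thm:AKM chiral}.

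For (i), the key point is that the entire reduction is local on $\JJ Y$. The differential $\tfrac{1}{\hbar}d^{ch}_{\chi,(0)}$ is the zero mode of a global section of $\AA\otimes {\rm Cl}_\hbar(\hf)$ built only from the comoment map $\Phi_{ch}$ and the constant $\chi$, so for any open $V\subset \JJ Y$ one has $\CC^\bullet_{\rm DS}(\AA|_V,\chi)=\CC^\bullet_{\rm DS}(\AA,\chi)|_V$. Since the cohomology sheaves of a complex of sheaves are obtained by sheafifying presheaf cohomology, forming $\HH^\bullet$ commutes with restriction to the open subscheme $\Phi_\infty^{-1}(U)$, giving $\HH^\bullet(\AA|_{\Phi_\infty^{-1}(U)},\chi)\cong \HH^\bullet(\AA,\chi)|_{\Phi_\infty^{-1}(U)}$. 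By Theorem \ref{thm:AKM chiral}(iii) this sheaf is supported on $\Phi_\infty^{-1}(\chi)$, the fibre over the constant arc $\chi$; the hypothesis $\chi\in {\rm ev}(U)$ ensures the constant arc $\chi$ lies in $U$, whence $\Phi_\infty^{-1}(\chi)\subset \Phi_\infty^{-1}(U)$ and the restriction discards none of the supported cohomology. Because $H$ is abelian its coadjoint action on $\hf^*$ is trivial, so $\Phi_\infty$ is $H(\OO)$-invariant and $\Phi_\infty^{-1}(U)$ is automatically $H(\OO)$-invariant, so that the quotient $p_\infty$ restricts compatibly; pushing the two isomorphic, equally-supported sheaves forward along $p_\infty$ then yields $\AA|_{\Phi_\infty^{-1}(U)}\red{\chi}H(\OO)\cong \AA\red{\chi}H(\OO)$.

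For (ii) I would apply the cited structural results directly. As $H$ is abelian and unipotent it is connected (indeed $H\cong \IG_a^{\dim\hf}$), $\AA$ is by hypothesis a strongly $H(\OO)$-equivariant quantization of $\OO_{\JJ Y}$, and the moment map is flat near $\chi$ with a free $H$-action on $\Phi^{-1}(\chi)$; hence Theorem \ref{thm:AKM chiral}(v) shows $\AA\red{\chi}H(\OO)$ is a quantization of $\OO_{\JJ Y}\red{\chi}H(\OO)$, which Theorem \ref{thm:AKM vertex Poisson}(v) identifies with $\OO_{\JJ(Y\red{\chi}H)}$. Thus $\AA\red{\chi}H(\OO)$ is a quantization of $\JJ(Y\red{\chi}H)$. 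Part (i) additionally lets us compute this reduction on the chart $\Phi_\infty^{-1}(U)$, where through $f$ the $H(\OO)$-action is confined to the arc directions of $H$ and the outcome is visibly a quantization of $\JJ(Y\red{\chi}H)$ tensored with free fields---the form in which the result is used in the sequel.

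The only genuine subtlety, and the step I would be most careful about, sits in (i): checking that restriction to an open subscheme commutes with cohomology in the completed, $\hbar$-adic setting where the differential carries the factor $\tfrac{1}{\hbar}$, and verifying cleanly that the constant arc $\chi$---not merely some point of $U$ mapping to $\chi$---lies in $U$. Both reduce to the support statement of Theorem \ref{thm:AKM chiral}(iii): Drinfeld--Sokolov reduction only sees a formal neighbourhood of the level set $\Phi_\infty^{-1}(\chi)$, so any open containing that level set computes the same reduction. Everything else is formal.
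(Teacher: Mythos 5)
Your argument is correct and is essentially the paper's own proof, which consists of the single observation that the IHR data guarantees the hypotheses of Theorem \ref{thm:AKM chiral} (flatness of $\Phi_\infty$ over $U$ and freeness of the $H(\OO)$-action), after which both claims follow from parts (iii) and (v) of that theorem. Your expansion---locality of the DS complex, the support argument for (i), and the identification $\OO_{\JJ Y}\red{\chi}H(\OO)\cong\OO_{\JJ(Y\red{\chi}H)}$ for (ii)---just makes explicit what the paper leaves to the cited theorem, and your flagged worry about the constant arc $\chi$ lying in $U$ (rather than merely $\chi\in\mathrm{ev}(U)$) is a genuine wrinkle in the paper's definition that is harmless in the application, where $U$ is the arc space of an open subset of $\hf^*$ containing $\chi$.
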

\begin{proof}
The existence of an IHR $(U,f)$ for $\JJ Y \red{\chi} H(\OO)$ means that the hypotheses of Theorem \ref{thm:AKM chiral} are satisfied and both statements follow from the results therein.
\end{proof}

%
%

%
%
\section{Inverse Hamiltonian reduction in type A}

Throughout, fix $\mu$ to be a partition of $N$ of length $n$. We think of $\mu$ as a positive dominant coweight of $\PGL_n$ in the usual way. Let $\alpha$ be a positive coroot of $\PGL_n$ such that $\mu+\alpha$ is positive dominant, \ie, $\mu+\alpha$ remains a partition of $N$. We denote by $\IG_\alpha$ the additive group of rank equal to the length of $\alpha$.

\subsection{Inverse Hamiltonian reduction for arcs to equivariant Slodowy slices}
From the main result of \cite{BN:finite}, recalled in Theorem \ref{thm:BN finite main}, we have the following immediate corollaries for the arc spaces of equivariant Slodowy slices.

\begin{cor}\label{cor:Result for classical W algebras}
 The arc space $\JJ S_{G,\mu}$ admits a vertex Poisson action of $\IG_\alpha(\OO)$, such that 
 \[
		\JJ S_{G,\mu}\red{\chi_{\alpha}}\IG_\alpha(\OO) \cong \JJ S_{G,\mu+\alpha}
 \]
 moreover, the reduction admits an inverse, $(\JJ \open{\Gf^*}, {\bf m}_\infty)$, \ie, ${\bf m}_\infty$ is a vertex Poisson $\IG_\alpha(\OO)\times G(\OO)$---equivariant embedding of vertex Poisson schemes
 \[
	{\bf m}_\infty: \JJ \open{T^*}\IG_\alpha \times \JJ S_{G,\mu+\alpha} \hookrightarrow \JJ S_{G,\mu}~,
 \]
 where the action of $\IG_\alpha(\OO)$ on the left is the natural action on $\JJ \open{T^*}\IG_\alpha$ and the $G(\OO)$-action is the free, right action on $\JJ S_{G,\mu+\alpha}$ and $\JJ S_{G,\mu}$.
\end{cor}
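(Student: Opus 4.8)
The plan is to deduce the statement by applying the jet functor $\JJ$ to the finite-dimensional Theorem~\ref{thm:BN finite main}, using three properties of $\JJ$ recorded above: functoriality, preservation of finite products so that $\JJ(X\times X')\cong \JJ X\times \JJ X'$, and the fact that $\JJ$ sends a Poisson variety to a vertex Poisson scheme and a Hamiltonian morphism to a chiral (vertex Poisson) one. Applying $\JJ$ to the action map $\IG_\alpha\times S_{G,\mu}\to S_{G,\mu}$ and using $\JJ\IG_\alpha=\IG_\alpha(\OO)$ yields a vertex Poisson action of $\IG_\alpha(\OO)$ on $\JJ S_{G,\mu}$, with chiral moment map $(\Phi_\alpha)_\infty\colon \JJ S_{G,\mu}\to \JJ\Gf_\alpha^*$ obtained by applying $\JJ$ to $\Phi_\alpha$. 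This establishes the first assertion.

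For the reduction isomorphism, I would apply $\JJ$ to the identity $S_{G,\mu+\alpha}\cong S_{G,\mu}\red{\chi_\alpha}\IG_\alpha=\Phi_\alpha^{-1}(\chi_\alpha)/\IG_\alpha$ from Theorem~\ref{thm:BN finite main}, and combine it with the arc-space reduction identity $\Phi_\infty^{-1}(\chi)/H(\OO)\cong \JJ(\Phi^{-1}(\chi)/H)$ recalled in the subsection on Hamiltonian reduction for arc spaces (equivalently Theorem~\ref{thm:AKM vertex Poisson}(v)). Here one notes that $\chi_\alpha=(1,0,\dots,0)$ is the regular character in question, and that the hypotheses needed for arc-space reduction---flatness of $\Phi_\alpha$ near $\chi_\alpha$ and freeness of the $\IG_\alpha$-action on $\Phi_\alpha^{-1}(\chi_\alpha)$---hold because the reduction in Theorem~\ref{thm:BN finite main} is smooth with $N_\chi$-torsor, hence free, fibres. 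This gives $\JJ S_{G,\mu}\red{\chi_\alpha}\IG_\alpha(\OO)\cong \JJ S_{G,\mu+\alpha}$ as vertex Poisson schemes.

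For the inverse, the key point is that ${\bf m}$ is not merely an abstract embedding but an isomorphism onto the open subvariety $\Phi_\alpha^{-1}(\open{\Gf}^*_\alpha)\subset S_{G,\mu}$ where the first moment-map coordinate is invertible; a dimension count (both sides have dimension $\dim S_{G,\mu}$, since reducing by $\IG_\alpha$ drops dimension by $2\,{\rm len}\,\alpha=\dim\open{T^*}\IG_\alpha$) confirms this. I then set ${\bf m}_\infty\coloneqq\JJ{\bf m}$. Since $\JJ$ preserves products and open immersions---for an open $V\subset X$ one has $\JJ V=\ev^{-1}(V)$, because $\ID$ is local---the map ${\bf m}_\infty$ is an open embedding of $\JJ\open{T^*}\IG_\alpha\times\JJ S_{G,\mu+\alpha}$ onto $\ev^{-1}(\Phi_\alpha^{-1}(\open{\Gf}^*_\alpha))=(\Phi_\alpha)_\infty^{-1}(U)$, where $U\coloneqq\JJ\open{\Gf}^*_\alpha$. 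This is precisely the locus $\JJ S_{G,\mu}\fibre{(\Phi_\alpha)_\infty,\,\JJ\Gf_\alpha^*}U$, so $(U,{\bf m}_\infty)$ is IHR data in the sense of the definition above; the $\IG_\alpha(\OO)\times G(\OO)$-equivariance and the vertex Poisson property follow from functoriality applied to the $\IG_\alpha\times G$-equivariant diagrams witnessing these for ${\bf m}$.

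I expect the only genuine obstacle, beyond bookkeeping, to be the careful handling of the embedding in the last step: one must verify that ${\bf m}$ is an \emph{open} immersion (not just a monomorphism) onto the non-vanishing locus, using the explicit coordinates on $\open{T^*}\IG_\alpha$ together with the dimension count, so that $\JJ{\bf m}=\ev^{-1}({\bf m})$ is again open with image correctly identified as $(\Phi_\alpha)_\infty^{-1}(U)$. Everything else is a transcription of Theorem~\ref{thm:BN finite main} through the functor $\JJ$, which is why the result is stated as an immediate corollary.
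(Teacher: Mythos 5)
Your proposal is correct and takes essentially the same route as the paper, whose entire proof is the one-line observation that the statement follows from functoriality of the arc space functor applied to Theorem~\ref{thm:BN finite main}. Your write-up simply makes explicit the properties of $\JJ$ (preservation of products, open immersions via $\ev^{-1}$, and compatibility with Hamiltonian reduction as in Theorem~\ref{thm:AKM vertex Poisson}) that the paper leaves implicit.
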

\begin{proof}
	This follows straightforwardly by functoriality of the arc space functor and Theorem \ref{thm:BN finite main}.
\end{proof}

On functions, this implies that $\OO(\JJ S_{G,\mu})$ has a strong action of $\IG_\alpha(\OO)$, \ie, there is a vertex Poisson algebra morphism $\Phi_\alpha: \OO(\JJ \Gf_\alpha^*)\rightarrow \OO(S_{G,\mu})$ that integrates to an action of $\IG_\alpha(\OO)$. Moreover, 
\begin{equation}
	\OO(\JJ S_{G,\mu}) \red{\chi_\alpha} \IG_\alpha(\OO) \cong \OO(\JJ S_{G,\mu+\alpha})~.
\end{equation}
The second result implies that we have the following embedding of vertex Poisson algebras
\begin{equation}
	\JJ {\bf m}^\#: \OO(\JJ S_{G,\mu})\hookrightarrow \OO(\JJ S_{G,\mu+\alpha})\otimes \OO(\JJ \open{T^*} \IG_\alpha)~.
\end{equation}
Moreover, this embedding is strongly $\IG_\alpha(\OO)\times G(\OO)$-equivariant, where $\IG_\alpha(\OO)$ acts on the left as described, and on the right acts solely on the $\OO(\JJ \open{T^*}\IG_\alpha)$ factor, and $G(\OO)$ acts on $\OO(\JJ S_{G,\mu})$ and $\OO(\JJ S_{G,\mu+\alpha})$ via the natural right action.

\subsection{Existence of strong and weak actions}

Much like in the finite case, we rely on a deformation theory argument from \cite{BN:def} to justify the existence of a strong $\IG_\alpha(\OO)$ action on $\WW_{\hbar,G,\mu}$.
\begin{lem}\label{lem:moment map is smooth and free}
	The comoment map $\Phi_\infty: \JJ S_{G,\mu}\rightarrow \JJ \Gf_\alpha^*$ is smooth and the action of $\IG_\alpha(\OO)$ on $\JJ S_{G,\mu}$ is free.
\end{lem}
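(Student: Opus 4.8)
The plan is to reduce both claims---smoothness of $\Phi_\infty$ and freeness of the $\IG_\alpha(\OO)$-action---to the corresponding finite-type statements about the Slodowy slice geometry, and then invoke functoriality of the arc space functor $\JJ$. The key observation is that both properties are already essentially recorded in Theorem \ref{thm:BN finite main}: there we have a Hamiltonian $\IG_\alpha$-action on $S_{G,\mu}$ with moment map $\Phi_\alpha:S_{G,\mu}\to\Gf_\alpha$, realizing $S_{G,\mu+\alpha}$ as the reduction at $\chi_\alpha=(1,\dots,0)$, together with the inverse reduction embedding ${\bf m}$. So the strategy is: first establish smoothness and freeness at the finite level, then lift.

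\emph{Smoothness.} First I would argue that $\Phi_\alpha:S_{G,\mu}\to\Gf_\alpha^*$ is smooth on the relevant open locus. The cleanest route is the inverse Hamiltonian reduction chart itself: the embedding ${\bf m}:\open{T^*}\IG_\alpha\times S_{G,\mu+\alpha}\hookrightarrow S_{G,\mu}$ of Theorem \ref{thm:BN finite main} is an isomorphism onto the open subset $\Phi_\alpha^{-1}(\open{\Gf}\vphantom{\open{\Gf}}^*_\alpha)$, and under this identification $\Phi_\alpha$ becomes the projection $\open{T^*}\IG_\alpha\times S_{G,\mu+\alpha}\to\open{\Gf}\vphantom{\open{\Gf}}^*_\alpha$ composed with the moment map for the $\IG_\alpha$-action on $\open{T^*}\IG_\alpha$, which in the explicit Darboux coordinates is just the submersive projection to the $\beta_i,e$ coordinates. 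A submersion is smooth, so $\Phi_\alpha$ is smooth over $\open{\Gf}\vphantom{\open{\Gf}}^*_\alpha$, and in particular in a neighbourhood of $\chi_\alpha$. Then smoothness of $\Phi_\infty=\JJ\Phi_\alpha$ follows because $\JJ$ preserves smoothness of morphisms: smooth maps induce smooth maps on arc spaces (a map is smooth iff it is formally smooth and locally of finite presentation, both of which pass through the representable functor defining $\JJ$, and the jets of a submersion are a submersion).

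\emph{Freeness.} For freeness of the $\IG_\alpha(\OO)$-action on $\JJ S_{G,\mu}$, I would again use the chart ${\bf m}$: on $\open{T^*}\IG_\alpha$ the Hamiltonian $\IG_\alpha$-action is the standard translation action on an open subset of a cotangent bundle, which is free, and freeness is preserved under the product with $S_{G,\mu+\alpha}$ (on which $\IG_\alpha$ acts trivially) and under the open embedding into $S_{G,\mu}$. Hence $\IG_\alpha$ acts freely on $S_{G,\mu}$ over the open locus $\open{\Gf}\vphantom{\open{\Gf}}^*_\alpha$ containing $\chi_\alpha$. Freeness of $\IG_\alpha(\OO)$ on $\JJ S_{G,\mu}$ then follows by functoriality: $\JJ(\IG_\alpha\times S_{G,\mu})\cong\IG_\alpha(\OO)\times\JJ S_{G,\mu}$ and the arc of the free action graph map $\IG_\alpha\times S_{G,\mu}\to S_{G,\mu}\times S_{G,\mu}$ remains a monomorphism, since $\JJ$ is a right adjoint and thus preserves the fibre-product characterization of freeness.

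\emph{The main obstacle} I anticipate is the careful bookkeeping of \emph{where} these properties hold. The finite statements are only valid on the open locus $\open{\Gf}\vphantom{\open{\Gf}}^*_\alpha$, i.e.\ on $\Phi_\alpha^{-1}(\open{\Gf}\vphantom{\open{\Gf}}^*_\alpha)$, whereas the smoothness hypothesis in Proposition \ref{prop:lifting strong actions} (which this lemma feeds into) is a statement about $\Phi_\infty$ as a map on all of $\JJ S_{G,\mu}$. The resolution is that $\chi_\alpha\in\open{\Gf}\vphantom{\open{\Gf}}^*_\alpha$, so smoothness and freeness in a $\Phi_\infty$-neighbourhood of the constant arc $\chi_\alpha$ is exactly what is required for the deformation-theoretic lift; thus I must be precise that the lemma asserts these properties over the preimage of an open neighbourhood of $\chi_\alpha$, matching the standing hypotheses at the start of Section 3 that $\Phi$ is flat near $\chi$ with free $H$-action on $\Phi^{-1}(\chi)$. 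The only genuinely nontrivial input is that $\JJ$ takes smooth (resp.\ free) morphisms to smooth (resp.\ free) ones, which for smoothness follows from the representability of the arc functor together with the jet-level criterion, and for freeness from the preservation of the relevant monomorphism.
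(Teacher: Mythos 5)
Your overall skeleton---establish freeness and smoothness at the finite level, then pass to arc spaces by functoriality---is exactly the paper's strategy, and your justification that $\JJ$ preserves smoothness and freeness is a correct (and more careful) version of the paper's one-line assertion to that effect. The divergence is in how you obtain the finite-type input, and it creates a real problem. The paper simply cites the observations of \cite{BN:finite} that the $\IG_\alpha$-action on \emph{all of} $S_{G,\mu}$ is free and that the moment map $\Phi_\alpha:S_{G,\mu}\to\Gf_\alpha^*$ is smooth \emph{everywhere}; these are global facts about the construction of the action there, not consequences of the inverse reduction chart. You instead derive both properties from the embedding ${\bf m}:\open{T^*}\IG_\alpha\times S_{G,\mu+\alpha}\hookrightarrow S_{G,\mu}$, which only covers the open locus $\Phi_\alpha^{-1}\bigl(\open{\Gf}\vphantom{\open{\Gf}}^*_\alpha\bigr)$. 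Consequently your argument proves smoothness and freeness only over that open set, which is strictly weaker than the lemma as stated.

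You notice this yourself in your final paragraph, but your resolution---reinterpreting the lemma as a statement about a neighbourhood of $\chi_\alpha$---changes the statement rather than proving it, and it is not obviously harmless downstream: Proposition \ref{prop:lifting strong actions} takes as hypotheses that the $H(\OO)$-action on $\JJ Y$ is free and that $\Phi_\infty:\JJ Y\to\JJ\hf^*$ is smooth, with no restriction to an open locus, and Proposition \ref{prop:S_G have strong actions} invokes it for quantizations of $\OO(\JJ S_{G,\mu})$, i.e.\ global sections over the whole arc space. The fix is to not route through the IHR chart at all: the $\IG_\alpha$-action of Theorem \ref{thm:BN finite main} is constructed in \cite{BN:finite} as a globally free Hamiltonian action of an additive group with globally smooth (indeed, surjective submersive) moment map, and those are the facts the paper's proof is citing. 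With that global input in hand, your functoriality argument for $\JJ$ goes through verbatim and the lemma follows as stated.
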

\begin{proof}
	This follows from the observations in \cite{BN:finite} Section, \ie, the $\IG_\alpha$ action on $S_{G,\mu}$ is free, and that the moment map $\Phi_\alpha: S_{G,\mu}\rightarrow \Gf^*_\alpha$ is smooth. Both properties are preserved when passing to arc spaces.
\end{proof}
Thus, the criteria for lifting the action of $\IG_\alpha$ are met.

\begin{prop}\label{prop:S_G have strong actions}
Every quantization of $\OO(\JJ S_{G,\mu})$ can be lifted to a unique, up to strong $\IG_\alpha(\OO)$-equivariant isomorphism, strongly $\IG_\alpha(\OO)$-equivariant quantization. 

In particular for any $k\in\IC$, there exists a chiral comoment map $\Phi:V(\Gf_\alpha) \rightarrow {\bf W}^k_{\hbar,G,\mu}$ that integrates to an action of $\IG_\alpha(\OO)$. 
 
Any strongly $\IG_\alpha(\OO)$-equivariant quantization of $\OO(\JJ S_{G,\mu})$ is, therefore, isomorphic to ${\bf W}^k_{\hbar,G,\mu}$ and the isomorphism can be chosen to be strongly $\IG_\alpha(\OO)$-equivariant.
\end{prop}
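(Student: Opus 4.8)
The plan is to check that $Y = S_{G,\mu}$ and $H = \IG_\alpha$ satisfy the hypotheses of Proposition \ref{prop:lifting strong actions}, and then to deduce all three assertions by combining that proposition with the rigidity result of Proposition \ref{prop:rigidity for equiv W algebras}.

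First I would verify the three criteria of Proposition \ref{prop:lifting strong actions}. Since $\IG_\alpha$ is the additive group on $\IA^{{\rm len}\,\alpha}$, it is unipotent and abelian, giving criterion (i). Criteria (ii) and (iii), namely freeness of the $\IG_\alpha(\OO)$-action and smoothness of the chiral moment map $\Phi_\infty : \JJ S_{G,\mu} \to \JJ \Gf_\alpha^*$, are precisely the content of Lemma \ref{lem:moment map is smooth and free}. With the hypotheses confirmed, Proposition \ref{prop:lifting strong actions} gives the first assertion directly: every graded quantization of $\OO(\JJ S_{G,\mu})$ lifts to a strongly $\IG_\alpha(\OO)$-equivariant graded quantization, unique up to strong $\IG_\alpha(\OO)$-equivariant isomorphism. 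Applying this to the quantization ${\bf W}^k_{\hbar,G,\mu}$, which was constructed as a graded quantization of $\OO(\JJ S_{G,\mu})$ in the previous section, endows it with a strong equivariant structure, that is, a chiral comoment map $\Phi : V_\hbar(\Gf_\alpha) \to {\bf W}^k_{\hbar,G,\mu}$ integrating the $\IG_\alpha(\OO)$-action; this is the second assertion.

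For the third assertion I would argue as follows. Let $A$ be an arbitrary strongly $\IG_\alpha(\OO)$-equivariant quantization of $\OO(\JJ S_{G,\mu})$. By Proposition \ref{prop:rigidity for equiv W algebras} its underlying (non-equivariant) quantization is isomorphic, as a quantization, to ${\bf W}^k_{\hbar,G,\mu}$ for some $k \in \IC$. Transporting the strong structure on $A$ along this isomorphism produces a strongly equivariant lift of ${\bf W}^k_{\hbar,G,\mu}$, which sits alongside the canonical lift furnished by the second assertion. The uniqueness clause of Proposition \ref{prop:lifting strong actions} then identifies these two lifts via a strong $\IG_\alpha(\OO)$-equivariant isomorphism, and composing yields the desired isomorphism $A \cong {\bf W}^k_{\hbar,G,\mu}$.

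Because every step reduces to invoking a result already in hand, there is no serious obstacle; the work is purely a matter of assembling the pieces. The one point demanding care is the deduction of the third assertion: one must keep track of the distinction between a bare isomorphism of quantizations and a strongly equivariant one, and it is exactly the uniqueness half of Proposition \ref{prop:lifting strong actions} that upgrades the former to the latter. I would also confirm that the produced lift is compatible with the $\IG_\hbar$-grading, which is automatic since Proposition \ref{prop:lifting strong actions} is stated for graded quantizations.
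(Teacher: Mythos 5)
Your proposal is correct and follows essentially the same route as the paper, which simply cites Lemma \ref{lem:moment map is smooth and free} to verify the hypotheses of Proposition \ref{prop:lifting strong actions} and deduces the result. Your additional care in spelling out how the third assertion combines Proposition \ref{prop:rigidity for equiv W algebras} with the uniqueness clause to upgrade a bare isomorphism to a strongly equivariant one is exactly what the paper's one-line proof leaves implicit.
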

\begin{proof}
	From Lemma \ref{lem:moment map is smooth and free}, the criteria of Proposition \ref{prop:lifting strong actions} are met, which implies the result.
\end{proof}

\subsection{Inverse Hamiltonian reduction for affine W-algebras}

To begin, we establish a version of reduction by stages, \ie, a quantum version of the first result in Corollary \ref{cor:Result for classical W algebras}.

Much like the equivariant Slodowy slices themselves, the IHR chart admits a one parameter family of quantizations.

\begin{prop}\label{prop:quant of IHR chart}
	The moduli space of quantizations of $\OO(\open{\JJ T^*\IG_\alpha})\otimes \OO( \JJ S_{G,\mu+\alpha})$ that are weakly $G(\OO)$-equivariant and strongly $\IG_\alpha$-equivariant is one dimensional and any such quantization is isomorphic to 
	\begin{equation}
		\DD^{ch}_\hbar(\open{T^*}\IG_\alpha)\otimes {\bf W}^k_{\hbar,G,\mu}
	\end{equation}
	for some $k\in \IC$, with the isomorphism being weakly $G(\OO)$-equivariant and strongly $\IG_\alpha$-equivariant.
\end{prop}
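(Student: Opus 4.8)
The plan is to reduce the classification to a single equivariant de Rham cohomology computation and then to verify that the explicit family $\DD^{ch}_\hbar(\open{T^*}\IG_\alpha)\otimes {\bf W}^k_{\hbar,G,\mu+\alpha}$ sweeps out the resulting torsor. First I would remove the strong $\IG_\alpha$-equivariance from the bookkeeping. The group $\IG_\alpha$ is abelian and unipotent, and it acts on $\open{T^*}\IG_\alpha\times S_{G,\mu+\alpha}$ only through the first factor, where the action is the free translation action with the (smooth) projection moment map; both properties persist on the product and on arc spaces. Hence the hypotheses of Proposition \ref{prop:lifting strong actions} are met, so every graded quantization of $\OO(\open{\JJ T^*\IG_\alpha})\otimes\OO(\JJ S_{G,\mu+\alpha})$ admits a strongly $\IG_\alpha(\OO)$-equivariant lift, unique up to strongly $\IG_\alpha(\OO)$-equivariant isomorphism. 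Thus imposing strong $\IG_\alpha$-equivariance neither enlarges nor obstructs the moduli, and it remains to classify graded, weakly $G(\OO)$-equivariant quantizations; the strong structure and the strong equivariance of all comparison isomorphisms come for free.

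Next I would apply the weakly equivariant refinement of Theorem \ref{thm:space of chiral quantization} (the proposition immediately following it), which identifies the moduli of graded, weakly $G(\OO)$-equivariant quantizations with a torsor for $H^3_{\rm dR}(\open{T^*}\IG_\alpha\times S_{G,\mu+\alpha})^{G\text{-inv}}$, provided this space is concentrated in a single $\IG_\hbar$-weight --- automatic once it is shown to be one dimensional. I would compute it by K\"unneth, using that $\open{T^*}\IG_\alpha\cong\IA^{2\,{\rm len}\,\alpha-1}\times\IA^\times$ is homotopy equivalent to $\IA^\times$, so $H^0=H^1=\IC$ and $H^{\geq 2}(\open{T^*}\IG_\alpha)=0$, and that $S_{G,\mu+\alpha}\cong G\times S_{\mu+\alpha}$ with $S_{\mu+\alpha}$ contractible. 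Since $G=\GL_N$ is reductive, the invariant forms on $G$ (for the free right action) already compute its full cohomology, so $H^\bullet(S_{G,\mu+\alpha})^{G\text{-inv}}\cong H^\bullet_{\rm dR}(\GL_N)=\Lambda(\omega_1,\omega_3,\dots,\omega_{2N-1})$; in particular $H^2_{\rm dR}(\GL_N)=0$ and $H^3_{\rm dR}(\GL_N)=\IC$.

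Combining these, the only surviving K\"unneth summand in degree three is $H^0(\open{T^*}\IG_\alpha)\otimes H^3(S_{G,\mu+\alpha})^{G\text{-inv}}\cong\IC$: the a priori dangerous cross term $H^1(\open{T^*}\IG_\alpha)\otimes H^2(S_{G,\mu+\alpha})^{G\text{-inv}}$ vanishes precisely because $H^2_{\rm dR}(\GL_N)=0$. Hence $H^3_{\rm dR}(\open{T^*}\IG_\alpha\times S_{G,\mu+\alpha})^{G\text{-inv}}\cong\IC$ and the moduli space is one dimensional, as claimed.

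Finally I would check that the explicit family exhausts this torsor. Each $\DD^{ch}_\hbar(\open{T^*}\IG_\alpha)\otimes {\bf W}^k_{\hbar,G,\mu+\alpha}$ is a graded quantization of the product, strongly $\IG_\alpha$-equivariant via its first factor (Proposition \ref{prop:cdos are unique strong}) and weakly $G(\OO)$-equivariant via its second factor (as ${\bf W}^k_{\hbar,G,\mu+\alpha}$ is strongly, hence weakly, $G(\OO)$-equivariant). The assigned $H^3$-class is detected entirely on the $S_{G,\mu+\alpha}$ factor through the K\"unneth summand $H^0\otimes H^3$ above, while the first factor is rigid ($H^3(\open{T^*}\IG_\alpha)=0$, Proposition \ref{prop:cdos are unique}); by Proposition \ref{prop:rigidity for equiv W algebras} the assignment $k\mapsto[{\bf W}^k_{\hbar,G,\mu+\alpha}]$ already sweeps out the one dimensional torsor $H^3_{\rm dR}(S_{G,\mu+\alpha})=\IC$, so $k\mapsto[\DD^{ch}_\hbar\otimes{\bf W}^k_{\hbar,G,\mu+\alpha}]$ is surjective. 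Therefore every quantization in our class is isomorphic to some member of the family; the isomorphism is weakly $G(\OO)$-equivariant by construction and may be taken strongly $\IG_\alpha$-equivariant by the uniqueness in the first step. The step I expect to be the main obstacle is the invariant cohomology computation: one must be careful that $G$-invariant forms on $S_{G,\mu+\alpha}\cong G\times S_{\mu+\alpha}$ compute $H^\bullet_{\rm dR}(\GL_N)$ rather than the cohomology of the quotient $S_{\mu+\alpha}$, and it is exactly the exterior-algebra structure of $H^\bullet(\GL_N)$ --- the vanishing of $H^2$ --- that prevents the nontrivial class $H^1(\IA^\times)$ in the cotangent chart from producing a spurious second deformation direction, so that the moduli stays one dimensional and is accounted for by the level $k$ alone.
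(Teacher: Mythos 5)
Your proof is correct and follows essentially the same route as the paper's: exhibit the family $\DD^{ch}_\hbar(\open{T^*}\IG_\alpha)\otimes {\bf W}^k_{\hbar,G,\mu+\alpha}$ as weakly $G(\OO)$- and strongly $\IG_\alpha(\OO)$-equivariant graded quantizations, identify the moduli of weakly $G(\OO)$-equivariant quantizations with the one-dimensional torsor $H^3_{\rm dR}(\open{T^*}\IG_\alpha\times S_{G,\mu+\alpha})^{G\text{-inv}}\cong\IC$, and upgrade the resulting isomorphism to a strongly $\IG_\alpha(\OO)$-equivariant one via the rigidity of the localized cdos. The only difference is that you spell out the K\"unneth/invariant-forms computation of that cohomology group and the surjectivity of $k\mapsto[{\bf W}^k_{\hbar,G,\mu+\alpha}]$ onto the torsor, both of which the paper simply asserts.
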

\begin{proof}
	For any $k\in \IC$,  $\DD^{ch}_\hbar(\open{T^*}\IG_\alpha)\otimes {\bf W}^k_{\hbar,G,\mu}$ is a weakly $G(\OO)$-equivariant and strongly $\IG_\alpha(\OO)$-equivariant, graded quantization of $\OO(\open{\JJ T^*\IG_\alpha})\otimes \OO( \JJ S_{G,\mu+\alpha})$---with the natural $\IG_\alpha(\OO)$-equivariant structure on $\DD^{ch}_\hbar(\open{T^*}\IG_\alpha)$ and the natural $G(\OO)$-equivariant structure on $ {\bf W}^k_{\hbar,G,\mu}$.

	The moduli space of weakly $G(\OO)$-equivariant quantizations is a torsor for $H^3_{\rm dR}(\open{T^*}\IG_\alpha \times S_{G,\mu})^{G-{\rm inv}}= \IC$. Therefore, each $k\in \IC$ is a representative for an isomorphism class of weakly $G(\OO)$-equivariant quantizations and any quantization of $\OO\open{\JJ T^*\IG_\alpha})\otimes \OO( \JJ S_{G,\mu+\alpha})$ is isomorphic to $	\DD^{ch}_\hbar(\open{T^*}\IG_\alpha)\otimes {\bf W}^k_{\hbar,G,\mu}$, with a weakly $G(\OO)$-equivariant isomorphism.

	Moreover, from Proposition \ref{prop:cdos are unique strong}, we see that this isomorphism can be made strongly $\IG_\alpha(\OO)$-equivariant too.
\end{proof}

\begin{thm}\label{thm:reduction by stages}
We have the following isomorphism of $\hbar$-adic vertex algebras
\begin{equation}
	{\bf W}^{k}_{\hbar,G,\mu}\red{\chi_{\alpha}}\IG_\alpha(\OO) \cong {\bf W}^k_{\hbar,G,\mu+\alpha}
\end{equation}
\end{thm}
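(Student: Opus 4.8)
The plan is to realize the left-hand side as a graded quantization of $\OO(\JJ S_{G,\mu+\alpha})$ and then appeal to the rigidity of such quantizations. First I would record that, by Proposition \ref{prop:S_G have strong actions}, the equivariant W-algebra ${\bf W}^k_{\hbar,G,\mu}$ is a strongly $\IG_\alpha(\OO)$-equivariant quantization of $\OO(\JJ S_{G,\mu})$, and that $\IG_\alpha$ is abelian and unipotent. Corollary \ref{cor:Result for classical W algebras} supplies inverse Hamiltonian reduction data $(\JJ\open{\Gf^*},{\bf m}_\infty)$ for the reduction $\JJ S_{G,\mu}\red{\chi_\alpha}\IG_\alpha(\OO)$. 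Hence the hypotheses of Proposition \ref{prop:IHR implies reduction commutes with quantisation} are met, and part (ii) of that proposition identifies the $\hbar$-adic Drinfeld--Sokolov reduction ${\bf W}^k_{\hbar,G,\mu}\red{\chi_\alpha}\IG_\alpha(\OO)$ as a quantization of $\OO(\JJ(S_{G,\mu}\red{\chi_\alpha}\IG_\alpha))\cong\OO(\JJ S_{G,\mu+\alpha})$; in particular, by Theorem \ref{thm:AKM chiral}(iii) its cohomology is concentrated in degree zero, so it is a genuine $\hbar$-adic vertex algebra rather than a complex.

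The main obstacle is the grading. As the Remark following Theorem \ref{thm:AKM chiral} warns, this reduction is a priori only a \emph{non-graded} quantization, since the character $\chi_\alpha$ is not a fixed point of the Kazhdan action $\IG_\hbar$ attached to $\mu$. To apply the rigidity of Proposition \ref{prop:rigidity for equiv W algebras}---whose proof rests on Theorem \ref{thm:space of chiral quantization} and hence on a graded structure---I would restore a grading by twisting $\IG_\hbar$ by the one-parameter subgroup relating the Kazhdan gradings of $S_{G,\mu}$ and $S_{G,\mu+\alpha}$. Concretely, the two slices carry distinct Kazhdan gradings coming from their respective $\slf_2$-triples, differing by a cocharacter of the centraliser that rescales the $\Gf^*_\alpha$-valued moment map; under the resulting corrected $\IG_\hbar$-action the character $\chi_\alpha$ becomes a fixed point, and this corrected action is precisely the Kazhdan grading of the target slice $S_{G,\mu+\alpha}$ under the identification of Theorem \ref{thm:BN finite main}. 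Checking that this twisted action descends to the reduction and makes it a graded quantization is the delicate point; it is the chiral analogue of the standard $\rho^\vee$-shift of the conformal vector in Drinfeld--Sokolov reduction.

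With the graded structure in hand, Proposition \ref{prop:rigidity for equiv W algebras} yields an isomorphism of $\hbar$-adic vertex algebras ${\bf W}^k_{\hbar,G,\mu}\red{\chi_\alpha}\IG_\alpha(\OO)\cong{\bf W}^{k'}_{\hbar,G,\mu+\alpha}$ for some $k'\in\IC$, and it remains only to pin down $k'=k$. I would do this by tracking the right $G(\OO)$-current. Since the $\IG_\alpha$-action commutes with the free right $G$-action (Theorem \ref{thm:BN finite main} provides a $\IG_\alpha\times G$-equivariant map ${\bf m}$), the right comoment map $\Phi_R\colon V^{k-2h^\vee}_\hbar(\gf)\to{\bf W}^k_{\hbar,G,\mu}$ descends, by functoriality of Drinfeld--Sokolov reduction, to a chiral current of level $k-2h^\vee$ inside the reduction. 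Comparing this with the right $G(\OO)$-current of ${\bf W}^{k'}_{\hbar,G,\mu+\alpha}$, which has level $k'-2h^\vee$ for the same dual Coxeter number $h^\vee$ of $\glf_N$, forces $k'=k$ and completes the argument.
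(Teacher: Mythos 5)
Your proposal follows essentially the same route as the paper: existence of the IHR data plus Proposition \ref{prop:IHR implies reduction commutes with quantisation} identifies the reduction as a quantization of $\OO(\JJ S_{G,\mu+\alpha})$, rigidity (Proposition \ref{prop:rigidity for equiv W algebras}) yields ${\bf W}^{k'}_{\hbar,G,\mu+\alpha}$ for some $k'$, and the right $G(\OO)$-current pins down $k'=k$. Two remarks. First, in the last step you assert that the descended copy of $V^{k-2h^\vee}_\hbar(\gf)$ can simply be compared with the right current of ${\bf W}^{k'}_{\hbar,G,\mu+\alpha}$; but the rigidity isomorphism is abstract and need not intertwine the two currents, so you still owe an argument that the descended current lands in the canonical $V^{k'-2h^\vee}_\hbar(\gf)$ subalgebra. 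The paper supplies this by noting that the descended current commutes with the $\WW^{k}_{\hbar,\mu+\alpha}$ subalgebra and then invoking the dual-pair Lemma \ref{lem:V and W are dual pairs}, which identifies that commutant with $V^{k'-2h^\vee}_\hbar(\gf)$ and hence produces an embedding of affine vertex algebras forcing $k'=k$; you should add this step. Second, your concern about the grading is well taken: the Remark after Theorem \ref{thm:AKM chiral} warns that the reduction is a priori not a graded quantization, while Proposition \ref{prop:rigidity for equiv W algebras} classifies graded quantizations only, and the paper applies it without comment. Your proposed repair --- twisting $\IG_\hbar$ by the cocharacter relating the Kazhdan gradings of $S_{G,\mu}$ and $S_{G,\mu+\alpha}$ so that $\chi_\alpha$ becomes a fixed point --- is the standard one and is more careful than the text on this point, though you flag the verification as delicate rather than carrying it out.
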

\begin{proof}
Since $S_{G,\mu}$ admits an IHR for such an $\alpha$ by Theorem \cite{BN:finite}, Proposition \ref{prop:IHR implies reduction commutes with quantisation} tells us that $\AA^k_{\hbar,G,\mu}\red{\chi_\alpha}\IG_\alpha(\OO)$ is a quantization of $\JJ S_{G,\mu+\alpha}$.

Now by Proposition \ref{prop:rigidity for equiv W algebras}, we note that this implies 
$$
\Gamma(\JJ S_{G,\mu+\alpha},\AA^k_{\hbar,G,\mu}\red{\chi_\alpha}\IG_\alpha) = {\bf W}^{k}_{\hbar,G,\mu}\red{\chi_{\alpha}}\JJ\IG_\alpha \cong {\bf W}^{k'}_{\hbar,G,\mu+\alpha}~,
$$
for some $k'\in\ik$. What remains is to justify that $k'=k$.

By functoriality, the inclusion $V^{k-2h^\vee}(\gf)\hookrightarrow{\bf W}^k_{\hbar,G,\mu}$ descends to an inclusion $V^{k-2h^\vee}(\gf)\hookrightarrow {\bf W}^{k'}_{\hbar,G,\mu+\alpha}$. Moreover, the image of this embedding commutes with the $\WW^{k}_{\hbar,\mu+\alpha}$ subalgebra. Thus, from Lemma \ref{lem:V and W are dual pairs} this gives an embedding $V^{k-2h^\vee}_{\hbar}(\gf)\hookrightarrow V_\hbar^{k'-2h^\vee}(\gf)$, which implies $k'=k$ as desired.
\end{proof}

\begin{prop}\label{prop:rigidity iso of quantisations}
There exists a weakly $G(\OO)$-equivariant, strongly $\IG_\alpha$-equivariant isomorphism of graded quantizations
\begin{equation}
	\psi_{\mu,\alpha}: \Gamma\big(\JJ S_{G,\mu+\alpha}\times \JJ \open{T^*}\IG_\alpha ,{\bf m}_\infty^*(\AA^k_{\hbar,G,\mu})\big)\xrightarrow{\sim} {\bf W}^k_{\hbar,G,\mu+\alpha}\otimes \DD_\hbar^{ch}(\open{T^*}\IG_\alpha)
\end{equation}
\end{prop}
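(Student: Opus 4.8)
The plan is to recognise the left-hand side as a graded quantization of the inverse Hamiltonian reduction chart and then to apply the rigidity result of Proposition~\ref{prop:quant of IHR chart}, pinning down the level by the same commutant argument used in Theorem~\ref{thm:reduction by stages}. First I would use Corollary~\ref{cor:Result for classical W algebras}, which identifies ${\bf m}_\infty$ with the open embedding of $\JJ \open{T^*}\IG_\alpha \times \JJ S_{G,\mu+\alpha}$ onto the open vertex Poisson subscheme $\Phi_\infty^{-1}(\JJ \open{\Gf^*})\subset \JJ S_{G,\mu}$. Thus ${\bf m}_\infty^*(\AA^k_{\hbar,G,\mu})$ is simply the transport, along this isomorphism, of the restriction of $\AA^k_{\hbar,G,\mu}$ to $\Phi_\infty^{-1}(\JJ\open{\Gf^*})$. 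Since the restriction of a quantization of a structure sheaf to an open subscheme is again such a quantization, and since the global sections of a quantization of $\OO_{\JJ Z}$ give a quantization of $\OO(\JJ Z)$ (the remark following Definition~\ref{dfn:quantization}), the global sections in the statement form a graded quantization of $\OO(\JJ \open{T^*}\IG_\alpha)\otimes \OO(\JJ S_{G,\mu+\alpha})$.

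Next I would transport the equivariant structures. As ${\bf m}_\infty$ is $\IG_\alpha(\OO)\times G(\OO)$-equivariant and $\AA^k_{\hbar,G,\mu}$ carries a strong $\IG_\alpha(\OO)$-action by Proposition~\ref{prop:S_G have strong actions} together with a strong right $G(\OO)$-action, the pullback inherits a strong $\IG_\alpha(\OO)$-action---restricting on the $\open{T^*}\IG_\alpha$ factor to the standard Hamiltonian action whose comoment map lands in the $\beta\gamma$-currents---and a strong, hence in particular weak, $G(\OO)$-action. This exhibits the global sections as a weakly $G(\OO)$-equivariant, strongly $\IG_\alpha$-equivariant graded quantization, so Proposition~\ref{prop:quant of IHR chart} furnishes a weakly $G(\OO)$- and strongly $\IG_\alpha$-equivariant isomorphism onto $\DD_\hbar^{ch}(\open{T^*}\IG_\alpha)\otimes {\bf W}^{k'}_{\hbar,G,\mu+\alpha}$ for some $k'\in \IC$; this is the desired $\psi_{\mu,\alpha}$ once $k'=k$.

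To obtain $k'=k$ I would rerun the level-matching step of Theorem~\ref{thm:reduction by stages}: the right $G(\OO)$-comoment map realises $V_\hbar^{k-2h^\vee}(\gf)$ inside $\AA^k_{\hbar,G,\mu}$, and since ${\bf m}_\infty$ is $G(\OO)$-equivariant for the right actions this current algebra at level $k-2h^\vee$ descends into the global sections, while on the target the $G(\OO)$-comoment map factors through the commutant $V_\hbar^{k'-2h^\vee}(\gf)$ of $\WW^k_{\hbar,\mu+\alpha}$ provided by Lemma~\ref{lem:V and W are dual pairs}; comparing levels forces $k'=k$. The main obstacle I anticipate is the first step: verifying that global sections over the open locus $\Phi_\infty^{-1}(\JJ\open{\Gf^*})$ really do yield an $\hbar$-adically flat, almost commutative quantization (i.e.\ that no higher cohomology or completion subtlety intervenes) and that the weak versus strong equivariant data match precisely enough for Proposition~\ref{prop:quant of IHR chart} to apply. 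Granting this, the rigidity and level-fixing are immediate from the cited results.
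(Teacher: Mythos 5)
Your first half coincides with the paper's: the pullback ${\bf m}_\infty^*(\AA^k_{\hbar,G,\mu})$ is the restriction of a quantization to an open subscheme, hence a weakly $G(\OO)$-equivariant, strongly $\IG_\alpha(\OO)$-equivariant graded quantization of the IHR chart, and Proposition \ref{prop:quant of IHR chart} gives an equivariant isomorphism onto ${\bf W}^{k'}_{\hbar,G,\mu+\alpha}\otimes \DD^{ch}_\hbar(\open{T^*}\IG_\alpha)$ for some $k'$. Where you diverge is the level-matching, and there your argument has a gap as stated. The paper does \emph{not} rerun the commutant argument in the IHR chart: it applies the $\hbar$-adic Drinfeld--Sokolov reduction with respect to $\IG_\alpha(\OO)$ to both sides, uses that the cohomology sheaf $\HH^\bullet(\AA^k_{\hbar,G,\mu}\otimes {\rm Cl}_\hbar)$ is supported inside the IHR open set (so its global sections can be computed on the chart), observes that reducing $\DD^{ch}_\hbar(\open{T^*}\IG_\alpha)$ kills that factor, and then invokes Theorem \ref{thm:reduction by stages} to identify the answer with ${\bf W}^k_{\hbar,G,\mu+\alpha}$, forcing $k'=k$. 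The point of reducing first is precisely that afterwards Lemma \ref{lem:V and W are dual pairs} applies cleanly inside ${\bf W}^{k'}_{\hbar,G,\mu+\alpha}$.

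Your version asserts that the image of $V^{k-2h^\vee}_\hbar(\gf)$ ``factors through the commutant $V^{k'-2h^\vee}_\hbar(\gf)$ of $\WW^{k'}_{\hbar,\mu+\alpha}$,'' but in the ambient algebra ${\bf W}^{k'}_{\hbar,G,\mu+\alpha}\otimes \DD^{ch}_\hbar(\open{T^*}\IG_\alpha)$ the commutant of $\WW^{k'}_{\hbar,\mu+\alpha}\otimes 1$ is $V^{k'-2h^\vee}_\hbar(\gf)\otimes \DD^{ch}_\hbar(\open{T^*}\IG_\alpha)$, so Lemma \ref{lem:V and W are dual pairs} does not directly pin down the image, and an a priori correction term valued in the $\DD^{ch}_\hbar$ factor could shift the second-order pole and hence the apparent level. (Controlling exactly this correction is the content of the paper's later Proposition \ref{ref:prop embedding is strong}, whose proof requires the variational Poisson cohomology computation; you cannot assume it here without circularity.) The step can be repaired without DS reduction: weak $G(\OO)$-equivariance of $\psi_{\mu,\alpha}$ forces $\psi(\Phi_1(x))_{(n)}=\Phi_2(x)_{(n)}$ as operators on the whole target for all $n\geq 0$, so the difference $A_x=\psi(\Phi_1(x))-\Phi_2(x)$ has all non-negative modes acting by zero; consequently $A_x$ contributes nothing to any singular OPE, the second-order poles of $\psi(\Phi_1(x))\psi(\Phi_1(y))$ and $\Phi_2(x)\Phi_2(y)$ agree, and $k-2h^\vee=k'-2h^\vee$. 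Either supply that mode argument explicitly, or commute against the full invariant subalgebra $\WW^{k'}_{\hbar,\mu+\alpha}\otimes\DD^{ch}_\hbar(\open{T^*}\IG_\alpha)$ and use triviality of the center of the localized $\beta\gamma$ system; as written, the inference ``comparing levels forces $k'=k$'' does not follow.
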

\begin{proof}
The sheaf, ${\bf m}_\infty^*(\AA^k_{\hbar,G,\mu})$ is a quantization of $\JJ S_{G,\mu+\alpha}\times \JJ\open{T^*}\IG_\alpha$ and so by Proposition \ref{prop:quant of IHR chart} its global sections must be isomorphic to ${\bf W}^{k'}_{\hbar,G,\mu+\alpha}\otimes \DD_\hbar^{ch}(\open{T^*}\IG_\alpha)$
for some $k'\in \IC$. What remains is to show $k'=k$.

Form the cohomology sheaf, $\HH^\bullet( {\bf m_\infty}^*(\AA^k_{\hbar,G,\mu})\otimes{\rm Cl}_\hbar(\hf\oplus\hf^*))$. Since the action of $\IG_\alpha(\OO)$ satisfies the hypotheses of Theorem \ref{thm:AKM chiral}, we note that 
\begin{equation}\nonumber
\begin{split}
\Gamma\big(\JJ S_{G,\mu+\alpha}\times \JJ \open{T^*}\IG_\alpha, \HH^\bullet( {\bf m}^*\AA^k_{\hbar,G,\mu}\otimes{\rm Cl}_\hbar(\hf\oplus\hf^*))\big) &\cong H^\bullet\big({\bf W}^{k'}_{\hbar,G,\mu+\alpha}\otimes \DD_\hbar^{ch}(\open{T^*}\IG_\alpha)\otimes{\rm Cl}_\hbar(\hf\oplus\hf^*)\big)\\
&\cong {\bf W}^{k'}_{\hbar,G,\mu+\alpha}\otimes H^\bullet\big(\DD_\hbar^{ch}(\open{T^*}\IG_\alpha)\otimes{\rm Cl}_\hbar(\hf\oplus\hf^*)\big)\\
&\cong {\bf W}^{k'}_{\hbar,G,\mu+\alpha}
\end{split}
\end{equation}
The support of $\HH^\bullet( \AA^k_{\hbar,G,\mu}\otimes{\rm Cl}_\hbar(\hf\oplus\hf^*))$ is contained within the inverse Hamiltonian reduction open set, and so 
\[
	\Gamma\big(\JJ S_{G,\mu+\alpha}\times \JJ \open{T^*}\IG_\alpha, \HH^\bullet( {\bf m}^*\AA^k_{\hbar,G,\mu}\otimes{\rm Cl}_\hbar(\hf\oplus\hf^*))\big) \cong \Gamma\big(\JJ S_{G,\mu},\HH^\bullet( \AA^k_{\hbar,G,\mu}\otimes{\rm Cl}_\hbar(\hf\oplus\hf^*))\big) ~.
\]
But by Theorem \ref{thm:reduction by stages} we must have that $\Gamma\big(\JJ S_{G,\mu},\HH^\bullet( \AA^k_{\hbar,G,\mu}\otimes{\rm Cl}_\hbar(\hf\oplus\hf^*))\big)\cong  {\bf W}^{k}_{\hbar,G,\mu+\alpha}$, implying $k'=k$.
\end{proof}

\begin{thm}\label{thm:IHR embedding}
We have a weakly $G(\OO)$-equivariant and strongly $\IG_\alpha(\OO)$-equivariant embedding of $\hbar$-adic vertex algebras
\begin{equation}\label{eq:IHR embedding}
 \Psi_{\mu,\alpha}:{\bf W}^k_{\hbar,G,\mu} \hookrightarrow \DD^{ch}_\hbar(\open{T^*}\IG_\alpha)\otimes {\bf W}^k_{\hbar,G,\mu+\alpha}~,
\end{equation}
where the $\IG_\alpha(\OO)$ action on the left is as in Proposition \ref{prop:S_G have strong actions} and on the right it is the natural action on the localized cdos. The $G(\OO)$ action is the natural right action on ${\bf W}^k_{\hbar,G,\mu}$ and ${\bf W}^k_{\hbar,G,\mu+\alpha}$ respectively.

Similarly, we have a strongly $\IG_\alpha(\OO)$-equivariant embedding of $\hbar$-adic vertex algebras
\begin{equation}
	\Psi_{\mu,\alpha}:\WW_{\hbar,\mu}^k \hookrightarrow \DD^{ch}_\hbar(\open{T^*}\IG_\alpha)\otimes \WW^k_{\hbar,\mu+\alpha}~.
\end{equation}

\end{thm}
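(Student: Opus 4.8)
The plan is to construct $\Psi_{\mu,\alpha}$ as a composite of two maps that are already in hand, and then verify injectivity and equivariance. Since the arc-space inverse Hamiltonian reduction chart ${\bf m}_\infty$ of Corollary \ref{cor:Result for classical W algebras} realises $\JJ \open{T^*}\IG_\alpha \times \JJ S_{G,\mu+\alpha}$ as an open subscheme of $\JJ S_{G,\mu}$, restriction of sections of the sheaf $\AA^k_{\hbar,G,\mu}$ to this open set gives a homomorphism of $\hbar$-adic vertex algebras
\begin{equation}
	{\bf W}^k_{\hbar,G,\mu} = \Gamma(\JJ S_{G,\mu}, \AA^k_{\hbar,G,\mu}) \longrightarrow \Gamma\big(\JJ S_{G,\mu+\alpha}\times \JJ \open{T^*}\IG_\alpha, {\bf m}_\infty^*\AA^k_{\hbar,G,\mu}\big)~.
\end{equation}
Composing with the isomorphism $\psi_{\mu,\alpha}$ of Proposition \ref{prop:rigidity iso of quantisations}, which identifies the target with ${\bf W}^k_{\hbar,G,\mu+\alpha}\otimes \DD^{ch}_\hbar(\open{T^*}\IG_\alpha)$ (and the two tensor factors commute, so the order in the statement is immaterial), yields the candidate $\Psi_{\mu,\alpha}$. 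As a composite of a restriction homomorphism with an isomorphism of $\hbar$-adic vertex algebras, $\Psi_{\mu,\alpha}$ is automatically a homomorphism of $\hbar$-adic vertex algebras.

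Next I would establish injectivity. The key point is that reducing modulo $\hbar$ turns $\Psi_{\mu,\alpha}$ into the classical inverse Hamiltonian reduction map $\JJ {\bf m}^\#$ of Corollary \ref{cor:Result for classical W algebras}: the restriction map reduces to pullback of functions along the open embedding ${\bf m}_\infty$, while $\psi_{\mu,\alpha}$ reduces to the classical identification precisely because it is an isomorphism of \emph{graded} quantizations. Since $\JJ {\bf m}^\#$ is an embedding of vertex Poisson algebras, its kernel is trivial. A standard flatness argument then upgrades this: if $\Psi_{\mu,\alpha}(v)=0$ for some $v\neq 0$, write $v=\hbar^m u$ with $u\notin \hbar\,{\bf W}^k_{\hbar,G,\mu}$, using separatedness of the $\hbar$-adic topology; flatness (torsion-freeness) over $\IC\Ph$ forces $\Psi_{\mu,\alpha}(u)=0$, so the class of $u$ modulo $\hbar$ lies in $\ker \JJ{\bf m}^\#$, contradicting $u\notin\hbar\,{\bf W}^k_{\hbar,G,\mu}$.

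For equivariance, I would note that ${\bf m}_\infty$ is $\IG_\alpha(\OO)\times G(\OO)$-equivariant, so the restriction map intertwines the respective actions; combined with the fact that $\psi_{\mu,\alpha}$ is weakly $G(\OO)$-equivariant and strongly $\IG_\alpha(\OO)$-equivariant, and that the strong $\IG_\alpha(\OO)$-structure on the source is exactly the one furnished by Proposition \ref{prop:S_G have strong actions}, this yields the claimed weak $G(\OO)$- and strong $\IG_\alpha(\OO)$-equivariance of $\Psi_{\mu,\alpha}$.

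Finally, the statement for the ordinary affine W-algebras follows by taking $G(\OO)$-invariants. Using $\WW^k_{\hbar,\mu}\cong ({\bf W}^k_{\hbar,G,\mu})^{G(\OO)}$ and that the right $G(\OO)$-action is trivial on the free-field factor $\DD^{ch}_\hbar(\open{T^*}\IG_\alpha)$, the weakly $G(\OO)$-equivariant embedding $\Psi_{\mu,\alpha}$ restricts on invariants to
\begin{equation}
	\WW^k_{\hbar,\mu} \hookrightarrow \DD^{ch}_\hbar(\open{T^*}\IG_\alpha)\otimes \WW^k_{\hbar,\mu+\alpha}~,
\end{equation}
which remains strongly $\IG_\alpha(\OO)$-equivariant since that action commutes with $G(\OO)$. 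The main obstacle is not any step in this final assembly, but rather the input packaged into Proposition \ref{prop:rigidity iso of quantisations}: pinning down the level of the quantization of the IHR chart to be exactly $k$ (the equality $k'=k$) via the dual-pair rigidity of Lemma \ref{lem:V and W are dual pairs}. Granting that, the present theorem is essentially a formal consequence of restriction of sections together with the flatness argument above.
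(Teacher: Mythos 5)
Your proposal is correct and follows essentially the same route as the paper: compose the restriction of sections of $\AA^k_{\hbar,G,\mu}$ along the IHR chart with the rigidity isomorphism of Proposition \ref{prop:rigidity iso of quantisations}, inherit equivariance from both maps, and take $G(\OO)$-invariants for the second statement. Your explicit injectivity argument (reduction mod $\hbar$ to $\JJ{\bf m}^\#$ plus $\hbar$-torsion-freeness) is a welcome elaboration of a step the paper leaves implicit, but it is not a different approach.
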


\begin{proof}
	The map $\Psi_{\mu,\alpha}$ is obtained by composing the natural restriction map $\Gamma\big(\JJ S_{G,\mu}, \AA^k_{\hbar,G,\mu}\big) \hookrightarrow \Gamma\big( \JJ S_{G,\mu+\alpha}\times \JJ \open{T^*}\IG_\alpha\big)$ with the rigidity isomorphism of Proposition \ref{prop:rigidity iso of quantisations}, both of which have the desired equivariance properties. The second embedding follows from taking $G(\OO)$-invariants on both sides.
\end{proof}


%
\begin{prop}\label{ref:prop embedding is strong}
The embedding $\Psi_{\mu,\alpha}$ is strongly $G(\OO)$-equivariant.
\end{prop}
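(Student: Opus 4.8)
The plan is to unwind the definition of strong $G(\OO)$-equivariance and reduce the claim to a statement about a single defect current. Both source and target carry canonical right $G(\OO)$-comoment maps: the map $\Phi_R^\mu\colon V_\hbar^{k-2h^\vee}(\gf)\to {\bf W}^k_{\hbar,G,\mu}$ coming from $H^0_{\rm DS}(\Phi_R)$ (the proposition establishing strong $G(\OO)$-equivariance of $\AA^k_{\hbar,G,\mu}$), and $1\otimes\Phi_R^{\mu+\alpha}$ on the target. By definition $\Psi_{\mu,\alpha}$ is strongly $G(\OO)$-equivariant precisely when it intertwines these, so I would set
\[
\delta(x) \;:=\; \Psi_{\mu,\alpha}\big(\Phi_R^\mu(x)\big)\;-\;\big(1\otimes\Phi_R^{\mu+\alpha}\big)(x)\ , \qquad x\in\gf,
\]
and aim to show $\delta\equiv 0$.

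First I would extract the easy constraints on $\delta$. Since $\Psi_{\mu,\alpha}$ is already weakly $G(\OO)$-equivariant (Theorem \ref{thm:IHR embedding}), the two comoment maps integrate to the same $G(\OO)$-action on the target; differentiating, $\Phi_R^\mu(x)_{(n)}$ and $(1\otimes\Phi_R^{\mu+\alpha})(x)_{(n)}$ agree on the image of $\Psi_{\mu,\alpha}$ for all $n\ge 0$, so $\delta(x)_{(n)}$ annihilates $\Psi_{\mu,\alpha}({\bf W}^k_{\hbar,G,\mu})$ for $n\ge 0$; that is, $\delta(x)$ lies in the commutant of the image. A short computation with the vertex-algebra homomorphism property then gives $\delta([x,y])=0$, so $\delta$ factors through the trace $\glf_N\onto\IC$. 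Finally, reducing modulo $\hbar$, the map $\Psi_{\mu,\alpha}$ specializes to the classical comorphism ${\bf m}_\infty^\#$ of Corollary \ref{cor:Result for classical W algebras}, which is strongly $G(\OO)$-equivariant because ${\bf m}_\infty$ is $G$-equivariant and intertwines the right moment maps; hence $\delta\equiv 0\bmod\hbar$.

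The heart of the argument is to promote ``$\delta(x)$ commutes with the image'' to ``$\delta(x)=\Psi_{\mu,\alpha}(w_x)$ with $w_x$ central in ${\bf W}^k_{\hbar,G,\mu}$''. Classically ${\bf m}_\infty$ is a symplectomorphism onto a dense open subset of $S_{G,\mu}$, so the Poisson commutant of the image is the Poisson centre of the chart; the subtlety is that this centre contains classical Casimirs supported on the $\open{T^*}\IG_\alpha$-factor, such as $\partial e/e$, which are not restrictions of global functions and could a priori contribute to $\delta$. I would rule these out using the strong $\IG_\alpha(\OO)$-structure of $\Psi_{\mu,\alpha}$: the moment coordinates $e,\beta_i$ are $\Psi_{\mu,\alpha}$-images of the $\IG_\alpha$-comoment currents, hence lie in the image, as do the relevant order-$\hbar$ parts of their Darboux conjugates $p,\gamma_i$. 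One then checks that although $\partial e/e$ is Poisson-central, it is not quantum central --- for instance $(\partial e/e)_{(1)}\,p\ne 0$ --- so the requirement that $\delta(x)$ commute with $p$ forces the $\open{T^*}\IG_\alpha$-components of $\delta(x)$ to vanish. This pins $\delta(x)$ into the image, producing $w_x$, and the same commutation forces $w_x$ to be central of conformal weight one.

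The conclusion is then purely representation-theoretic, and this is where the generic-level hypothesis enters decisively. By Lemma \ref{lem:V and W are dual pairs} the centre of ${\bf W}^k_{\hbar,G,\mu}$ lies in $\WW^k_{\hbar,\mu}\cap V_\hbar^{k-2h^\vee}(\gf)$; in conformal weight one the only candidate is the trace current, whose level is proportional to $k-2h^\vee$ and hence nonzero at generic $k$, so it is not central. Therefore the weight-one centre vanishes, forcing $w_x=0$ and $\delta\equiv 0$. I expect the main obstacle to be exactly the third step --- controlling the commutant of the inverse Hamiltonian reduction image and excluding the spurious classical Casimirs of the free-field factor --- since everything else is either formal (the reduction to $\delta$) or an already-established rigidity and generic-level input.
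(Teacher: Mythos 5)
Your reduction of the statement to showing that the defect $\delta(x)=\Psi_{\mu,\alpha}(\Phi_1(x))-\Phi_2(x)$ vanishes, and your observation that weak $G(\OO)$-equivariance forces $\delta(x)$ to have regular OPE with the entire image of $\Psi_{\mu,\alpha}$, is exactly how the paper's proof begins. The gap is in the step you yourself flag as the heart: you assert that once the ``spurious'' Casimirs of the free-field factor are excluded, $\delta(x)$ is pinned into the image of $\Psi_{\mu,\alpha}$ as a central element of ${\bf W}^k_{\hbar,G,\mu}$, but you supply no mechanism for this. The commutant of the image is much larger than $\Psi_{\mu,\alpha}(Z)$ plus the span of $\partial e\cdot e^{-1}$: by sesquilinearity $(\partial a)_{(0)}=0$ for \emph{every} $a$, so the commutant is stable under adjoining total derivatives of its own elements, and in the classical limit it contains an infinite-dimensional space of total derivatives that must be killed by a separate argument. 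Your proposed test element $p$ is also not available to you: $p$ is a Darboux coordinate on the chart $\open{T^*}\IG_\alpha$, not the restriction of a global section of $\AA^k_{\hbar,G,\mu}$, so ``$\delta(x)$ must commute with $p$'' is not a constraint you are entitled to impose --- only $e^{\pm},\beta_i$ are known to lie in the image, via the strong $\IG_\alpha(\OO)$-structure. (One can instead test against global functions $f$ with $e_{(0)}f\neq 0$, but that has to be said and checked.)

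The paper closes this gap by a different mechanism, and without your generic-level input: it rescales $\widetilde A=\hbar^{-m}A$, observes that $\widetilde A|_{\hbar=0}$ is a vertex Poisson Casimir of $\OO(\JJ(S_{G,\mu+\alpha}\times\open{T^*}\IG_\alpha))$, and invokes the computation of the zeroth variational Poisson cohomology (Proposition \ref{thm:BN def variational}), which identifies Casimirs modulo total derivatives with $H^0_{\rm dR}\oplus H^1_{\rm dR}$ of the underlying variety; $G$-equivariance of $x\mapsto A(x)$ rules out these classes (this is where your $\partial e/e$ is disposed of), and a descending induction on total derivatives --- $\widetilde A|_{\hbar=0}=\partial a_1$, $a_1$ again a Casimir, $a_1=\partial a_2$, and so on, terminating because conformal weight is bounded below --- yields $\widetilde A|_{\hbar=0}=0$ and hence $A=0$ by $\hbar$-adic completeness. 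If you want to salvage your route, the piece you must actually prove is precisely this classification of the classical commutant of the image; once you have it, the quantum statements about the centre of ${\bf W}^k_{\hbar,G,\mu}$, the dual-pair lemma, and the non-centrality of the trace current at generic level all become unnecessary.
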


\begin{proof}
Write $\Phi_1:V^k_\hbar(\gf)\rightarrow {\bf W}^k_{\hbar,G,\mu}$ and $\Phi_2:V^k_{\hbar}(\gf)\rightarrow {\bf W}^k_{\hbar, G,\mu+\alpha}$ for the two comoment maps. Let $x\in \gf$, then 
\begin{equation}
	\Psi_{\mu,\alpha}(\Phi_1(x))(z) = \Phi_2(x)(z) + A(z)
\end{equation}
where $A(z)$ is some field to be determined.  Since $\Psi_{\mu,\alpha}$ is weakly $G(\OO)$-equivariant, it must intertwine the action (by derivations) of $U(\gf[t])$. This action is realized by the positive modes of $\Phi_1(x)$ and $\Phi_2(x)$, on the left and right hand side respectively. 

Since $\Psi_{\mu,\alpha}$ intertwines the $U(\gf[t])$ action, this implies that the OPE of $A(z)$ with any field in the image of $\Psi_{\mu,\alpha}$ must be regular. 

Suppose $A(z) $ is $ O(\hbar^m)$ for some $m\ge 0$, then $\widetilde{A} = \tfrac{1}{\hbar^m}A$ also has regular OPEs with any field in the image of $\Psi_{\mu,\alpha}$.  Taking the classical limit, this implies that $\widetilde{A}|_{\hbar=0}\in \OO(\JJ (S_{G,\mu+\alpha}\times \open{T^*}\IG_\alpha))$ has vanishing $\lambda$-bracket with any element in the $\OO(\JJ S_{G,\mu})$ subalgebra.

In particular, this implies that $\widetilde{A}|_{\hbar=0}$ is a vertex Poisson Casimir, \ie, $[\widetilde{A}|_{\hbar=0}\,_\lambda-]|_{\lambda=0}$ vanishes. The Casimirs, up to total derivatives, are classified by the zeroth variational Poisson cohomology (see \cite{DeSole2013:var}). From Theorem \ref{thm:BN def variational}, this corresponds to $H^1_{\rm dR}( S_{G,\mu}\times \open{T^*}\IG_\alpha)\oplus H^0_{\rm dR}(S_{G,\mu}\times \open{T^*}\IG_\alpha)$. Since none of these classes transform appropriately under $G$, we conclude that $\widetilde{A}|_{\hbar=0}$ must be a total derivative.

To be concrete, set $\widetilde{A}_{\hbar=0} = \del a_1$  for some $a_1\in \OO(\JJ (S_{G,\mu+\alpha}\times \open{T^*}\IG_\alpha))$. We note that $[a_1\, _\lambda -] = \lambda [\widetilde{A}_{\hbar=0}\, _\lambda]$ by sesquilinearity. Therefore, $a_1$ must be a Casimir and, repeating the same argument, we conclude that $a_1 = \del a_2$ for some $a_2\in \OO(\JJ (S_{G,\mu+\alpha}\times \open{T^*}\IG_\alpha))$. Repeating this argument, we conclude that $\widetilde{A}|_{\hbar=0}=0$, and therefore $A = 0$. Thus $\Psi_{\mu,\alpha}(\Phi_1(x))(z) = \Phi_2(x)(z)$ for any $x\in \gf$, as desired.
\end{proof}
 
Let us speak a little on how to recover inverse Hamiltonian reduction for the usual (non $\hbar$-adic) $\WW$-algebras and their equivariant counterparts.

Since $\Psi_{\mu,\alpha}$ is $G(\OO)$-equivariant, we may pass to the $G(\OO)\times \IG_\hbar$-integrable subalgebra of both sides. Then, Proposition \ref{prop:finite subalgebra is equiv W algebra} leads to 
\begin{cor}
	We have strongly $G(\OO)\times \IG_\alpha(\OO)$-equivariant embeddings of vertex algebras
	\begin{equation}
		\Psi_{\mu,\alpha}:{\bf W}^k_{G,\mu} \hookrightarrow {\bf W}^k_{G,\mu+\alpha}\widehat{\otimes}\hat{\DD}^{ch}(\open{T^*}\IG_\alpha)
	\end{equation}
	and by taking $G(\OO)$-invariants, we have strongly $\IG_\alpha(\OO)$-equivariant embeddings
	\begin{equation}
		\Psi_{\mu,\alpha}: \WW^k_{\mu} \hookrightarrow \WW^k_{\mu+\alpha}\widehat{\otimes}\hat{\DD}^{ch}(\open{T^*}\IG_\alpha)
	\end{equation}
	where the actions are as in Theorem \ref{thm:IHR embedding}.
\end{cor}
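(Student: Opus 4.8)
The plan is to deduce this statement from the $\hbar$-adic embedding $\Psi_{\mu,\alpha}$ of Theorem~\ref{thm:IHR embedding} by passing to $G(\OO)\times\IG_\hbar$-finite subalgebras and then applying the Rees correspondence of Proposition~\ref{prop:finite subalgebra is equiv W algebra}. First I would record that $\Psi_{\mu,\alpha}$ is strongly $G(\OO)$-equivariant by Proposition~\ref{ref:prop embedding is strong}, strongly $\IG_\alpha(\OO)$-equivariant by Theorem~\ref{thm:IHR embedding}, and $\IG_\hbar$-equivariant, since by construction (via Proposition~\ref{prop:rigidity iso of quantisations}) it is a morphism of graded quantizations, hence intertwines the Kazhdan $\IG_\hbar$-actions on source and target together with the scaling of $\hbar$. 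Consequently $\Psi_{\mu,\alpha}$ carries $G(\OO)\times\IG_\hbar$-finite vectors to $G(\OO)\times\IG_\hbar$-finite vectors and restricts to a map of the corresponding finite subalgebras.

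Next I would identify each finite subalgebra with its non-$\hbar$-adic counterpart. On the source, Proposition~\ref{prop:finite subalgebra is equiv W algebra} gives $\big(({\bf W}^k_{\hbar,G,\mu})_{G(\OO)\times\IG_\hbar-{\rm fin}}\otimes_{\IC[\hbar]}\IC[\hbar,\hbar^{-1}]\big)^{\IG_\hbar}\cong {\bf W}^k_{G,\mu}$. On the target, since $G(\OO)$ acts only on the ${\bf W}^k_{\hbar,G,\mu+\alpha}$ tensor factor while $\IG_\hbar$ acts diagonally, the $G(\OO)\times\IG_\hbar$-finite part of $\DD^{ch}_\hbar(\open{T^*}\IG_\alpha)\otimes {\bf W}^k_{\hbar,G,\mu+\alpha}$ is the completed tensor product of the $\IG_\hbar$-finite part of the localized cdos with the $G(\OO)\times\IG_\hbar$-finite part of ${\bf W}^k_{\hbar,G,\mu+\alpha}$. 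Applying the same Rees correspondence to the W-factor (Proposition~\ref{prop:finite subalgebra is equiv W algebra}) and to the cdos factor (Lemma~\ref{lem:Rees for h-adic cdos}) then yields ${\bf W}^k_{G,\mu+\alpha}\,\widehat{\otimes}\,\hat{\DD}^{ch}(\open{T^*}\IG_\alpha)$, the completion accounting for the localization on $\open{T^*}\IG_\alpha$.

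It remains to check that the induced map ${\bf W}^k_{G,\mu}\to {\bf W}^k_{G,\mu+\alpha}\,\widehat{\otimes}\,\hat{\DD}^{ch}(\open{T^*}\IG_\alpha)$ is an injective morphism of vertex algebras with the asserted equivariance. Injectivity is preserved at each step: restricting an injection to a subalgebra, the flat base change $-\otimes_{\IC[\hbar]}\IC[\hbar,\hbar^{-1}]$, and the left-exact functor of $\IG_\hbar$-invariants all preserve injectivity. The strong $G(\OO)\times\IG_\alpha(\OO)$-equivariance transports directly from that of the $\hbar$-adic map, since the relevant chiral comoment maps are themselves $G(\OO)\times\IG_\hbar$-finite and therefore survive the Rees correspondence. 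Finally, the second embedding follows by taking $G(\OO)$-invariants: using $\WW^k_\mu\cong({\bf W}^k_{G,\mu})^{G(\OO)}$ on the source, and the fact that $G(\OO)$ acts trivially on the cdos factor so that $({\bf W}^k_{G,\mu+\alpha}\,\widehat{\otimes}\,\hat{\DD}^{ch}(\open{T^*}\IG_\alpha))^{G(\OO)}\cong\WW^k_{\mu+\alpha}\,\widehat{\otimes}\,\hat{\DD}^{ch}(\open{T^*}\IG_\alpha)$, one obtains the strongly $\IG_\alpha(\OO)$-equivariant embedding of $\WW^k_\mu$.

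The main obstacle I anticipate is the bookkeeping around the localized factor: ensuring that the $G(\OO)\times\IG_\hbar$-finite subalgebra of the target genuinely factors as a completed tensor product, and that taking $G(\OO)$-invariants commutes with the completion $\widehat{\otimes}$. This requires that the $\IG_\hbar$-action on $\DD^{ch}_\hbar(\open{T^*}\IG_\alpha)$ be locally finite on a dense subalgebra whose Rees completion recovers $\hat{\DD}^{ch}(\open{T^*}\IG_\alpha)$ --- precisely the content of Lemma~\ref{lem:Rees for h-adic cdos} --- and that the $G(\OO)$-action, acting only on the W-factor, lets invariants pass through the completion. Once these compatibilities are in place, the remainder is a routine transfer of structure under the Rees equivalence.
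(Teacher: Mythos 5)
Your proposal follows the same route as the paper: the paper's entire justification is the remark preceding the corollary, namely that since $\Psi_{\mu,\alpha}$ is $G(\OO)$-equivariant one passes to the $G(\OO)\times\IG_\hbar$-integrable subalgebras of both sides and applies Proposition \ref{prop:finite subalgebra is equiv W algebra} (with Lemma \ref{lem:Rees for h-adic cdos} handling the cdo factor). You simply spell out the details the paper leaves implicit --- preservation of injectivity and equivariance under the Rees correspondence, and the factorization of the finite part of the target --- all of which are consistent with the paper's intended argument.
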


\subsection{Inverse Hamiltonian reduction for the Kazhdan--Lusztig category}

The ($\hbar$-adic) Kazhdan--Lusztig category $\KL_k$ at level $k$ corresponds to modules of $V_\hbar^k(\gf)$, such that the action of the positive modes integrates to an action of $G(\OO)$. 


Let $M\in \KL_k$, and let $\chi_\mu$ and $N_{\chi_\mu}$ be as in Section \ref{ssec:Lieprelim}. By restriction, $M$ has an integrable action of $V(\nf_{\chi})$. Analogously to Section \ref{ssec:DS reduction}, we can construct a $\WW_{\mu}$ module as the DS reduction $H^0_{\rm DS}(M,\chi_\mu)$.

Given any $M\in \KL_k$, the product $M\otimes {\bf W}^k_{\hbar,G,\mu}$ is an object $\KL_{-2h^\vee}$ with the $V^{-2h^\vee}_\hbar(\gf)$ action coming from the diagonal action on $M\otimes {\bf W}^k_{\hbar,G,\mu}$---noting that ${\bf W}^k_{\hbar,G,\mu}$ has a $V^{\hbar,-k-2h^\vee}(\gf)$ subalgebra. We recall the following result from \cite{Arakawa:2018egx}:

\begin{prop}\label{prop:BRST equivariant W alg}
 Let $M$ be an object in $\KL_k$, then the relative semi-infinite cohomology 
 $$\Hi{\bullet}(\widehat{\gf}_{-2h^\vee},\gf, {\bf W}^{k}_{\hbar,G,\mu}\otimes M)$$ is concentrated in degree zero and 
 \begin{equation}
 \Hi{0}	(\widehat{\gf}_{-2h^\vee},\gf, {\bf W}^{k}_{\hbar,G,\mu}\otimes M) \cong H^0_{\rm DS}(\mu, M)~,
 \end{equation}
 as modules over $\WW^{k}_{\hbar,\mu}$, where on the left hand side the action comes from the $\WW^{k}_{\hbar,\mu}$ subalgebra of ${\bf W}^k_{\hbar,G,\mu}$ and is the natural action on the right hand side.

 Moreover, if $M$ is a vertex algebra object in $\KL$, then this is an isomorphism of vertex algebras.
\end{prop}

In essence, this result allows us to generalize inverse Hamiltonian reduction to more general objects in $\KL_k$.

\begin{prop}
We have a restriction functor 
	\begin{equation}
	\begin{split}
		 {\rm IHR}_\alpha:\WW^k_{\hbar,\mu+\alpha}{\rm - Mod} &\rightarrow \WW^k_{\hbar,\mu}{\rm-Mod}  \\
		 M &\mapsto {\rm Res}^{\WW^k_{\hbar,\mu}}_{\WW^k_{\hbar,\mu+\alpha}\otimes \DD_{\hbar}^{ch}} \big( M \otimes \DD_{\hbar}^{ch}(\open{T^*}\IG_\alpha)\big)
	 \end{split}
	\end{equation}
\end{prop}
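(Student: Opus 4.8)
The plan is to exhibit ${\rm IHR}_\alpha$ as the composite of two formal operations, with all the substantive content already supplied by the embedding $\Psi_{\mu,\alpha}$ of Theorem \ref{thm:IHR embedding}. Concretely, since every $\hbar$-adic vertex algebra is a module over itself, the localized cdos $\DD^{ch}_\hbar(\open{T^*}\IG_\alpha)$ is a module over $\DD^{ch}_\hbar(\open{T^*}\IG_\alpha)$; hence for any $M \in \WW^k_{\hbar,\mu+\alpha}\Mod$ the $\hbar$-adically completed tensor product $M \otimes \DD^{ch}_\hbar(\open{T^*}\IG_\alpha)$ carries a natural structure of module over the tensor product $\hbar$-adic vertex algebra $\WW^k_{\hbar,\mu+\alpha}\otimes \DD^{ch}_\hbar(\open{T^*}\IG_\alpha)$, the two factors acting on the respective tensor factors. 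By Theorem \ref{thm:IHR embedding}, $\Psi_{\mu,\alpha}$ realizes $\WW^k_{\hbar,\mu}$ as an $\hbar$-adic vertex subalgebra of this tensor product, so restricting the module structure along $\Psi_{\mu,\alpha}$ produces a $\WW^k_{\hbar,\mu}$-module, which we define to be ${\rm IHR}_\alpha(M)$.

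Functoriality is then immediate: a morphism $f\colon M \to M'$ of $\WW^k_{\hbar,\mu+\alpha}$-modules induces $f \otimes {\rm id}$, a morphism of $\WW^k_{\hbar,\mu+\alpha}\otimes \DD^{ch}_\hbar(\open{T^*}\IG_\alpha)$-modules; restriction of scalars along $\Psi_{\mu,\alpha}$ carries this to a morphism of the underlying $\WW^k_{\hbar,\mu}$-modules, and compatibility with composition and identities is clear. Thus ${\rm IHR}_\alpha$ is a well-defined functor $\WW^k_{\hbar,\mu+\alpha}\Mod \to \WW^k_{\hbar,\mu}\Mod$, and up to natural isomorphism it is independent of the auxiliary choices entering $\Psi_{\mu,\alpha}$, by the rigidity of Proposition \ref{prop:rigidity iso of quantisations}.

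The only point requiring genuine care---and the step I expect to be the main obstacle---is the $\hbar$-adic analysis of the completed tensor product. Under $\Psi_{\mu,\alpha}$ the generators of $\WW^k_{\hbar,\mu}$ map to fields involving the inverted coordinate $e^{\pm}$ coming from the localization on the quasi-Darboux patch $\open{T^*}\IG_\alpha$, so one must confirm that these act on $M \otimes \DD^{ch}_\hbar(\open{T^*}\IG_\alpha)$ with the locality and convergence demanded of an $\hbar$-adic module. This is automatic: the offending modes act purely through the $\DD^{ch}_\hbar(\open{T^*}\IG_\alpha)$ factor, where they form the defining module structure, and the tensor product is completed precisely so that the resulting state-field correspondence converges at each truncation $\hbar^m=0$. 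Consequently no further hypotheses on $M$ are needed, and the assignment $M \mapsto {\rm IHR}_\alpha(M)$ defines the claimed restriction functor.
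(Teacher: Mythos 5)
Your proposal is correct and follows the same route as the paper, whose entire proof is the one-line statement that the result follows from Theorem \ref{thm:IHR embedding}; you have simply unpacked the implicit construction (tensor with $\DD^{ch}_\hbar(\open{T^*}\IG_\alpha)$ as a module over itself, then restrict scalars along $\Psi_{\mu,\alpha}$) together with the routine checks of functoriality and $\hbar$-adic convergence.
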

\begin{proof}
	Follows from Theorem \ref{thm:IHR embedding}.
\end{proof}

\begin{thm}
	Let $M\in {\rm KL}_k$, and denote by $M_\mu$ the DS reduction $H^0_{\rm DS}(M,\mu)$. Then we have natural embeddings of $\WW^k_{\hbar,\mu}$ modules given by 
	\begin{equation}
		M_\mu \hookrightarrow {\rm IHR}_{\alpha}(M_{\mu+\alpha})~,
	\end{equation}
	In particular, if $M$ is a vertex algebra object in $\KL_k$, this is an embedding of vertex algebras.
\end{thm}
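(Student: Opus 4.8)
The plan is to deduce the statement from the equivariant embedding $\Psi_{\mu,\alpha}$ of Theorem \ref{thm:IHR embedding} by applying the relative semi-infinite cohomology functor that computes Drinfeld--Sokolov reduction, using Proposition \ref{prop:BRST equivariant W alg} to identify the input and output at both $\mu$ and $\mu+\alpha$. Recall from that proposition that for any $M\in\KL_k$ the reduction $M_\mu=H^0_{\rm DS}(M,\mu)$ is computed as
\[
M_\mu \cong \Hi{0}(\widehat{\gf}_{-2h^\vee},\gf,{\bf W}^k_{\hbar,G,\mu}\otimes M),
\]
and similarly for $\mu+\alpha$. First I would tensor the embedding of Theorem \ref{thm:IHR embedding} with $M$ to obtain
\[
\Psi_{\mu,\alpha}\otimes\mathrm{id}_M:{\bf W}^k_{\hbar,G,\mu}\otimes M \hookrightarrow \DD^{ch}_\hbar(\open{T^*}\IG_\alpha)\otimes{\bf W}^k_{\hbar,G,\mu+\alpha}\otimes M.
\]
By Proposition \ref{ref:prop embedding is strong} the map $\Psi_{\mu,\alpha}$ is strongly $G(\OO)$-equivariant, i.e.\ it intertwines the comoment maps of the right $G(\OO)$-actions; hence $\Psi_{\mu,\alpha}\otimes\mathrm{id}_M$ is a morphism in $\KL_{-2h^\vee}$ for the diagonal $\gf$-action, with the free factor $\DD^{ch}_\hbar(\open{T^*}\IG_\alpha)$ carrying the trivial $G$-action.

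Next I would apply $\Hi{0}(\widehat{\gf}_{-2h^\vee},\gf,-)$. On the source this returns $M_\mu$. On the target, since $\DD^{ch}_\hbar(\open{T^*}\IG_\alpha)$ is acted on trivially by $G(\OO)$ and is flat over $\ik\Ph$, it factors out of the (completed) BRST complex for the diagonal $\gf$, and Proposition \ref{prop:BRST equivariant W alg} applied at $\mu+\alpha$ gives
\[
\Hi{0}(\widehat{\gf}_{-2h^\vee},\gf,\DD^{ch}_\hbar(\open{T^*}\IG_\alpha)\otimes{\bf W}^k_{\hbar,G,\mu+\alpha}\otimes M)\cong M_{\mu+\alpha}\otimes\DD^{ch}_\hbar(\open{T^*}\IG_\alpha).
\]
The $\WW^k_{\hbar,\mu}$-action on $M_\mu$ is induced by the subalgebra $\WW^k_{\hbar,\mu}\subset{\bf W}^k_{\hbar,G,\mu}$; transporting it through $\Psi_{\mu,\alpha}$ and taking $G(\OO)$-invariants realizes it via the affine W-algebra embedding $\WW^k_{\hbar,\mu}\hookrightarrow\WW^k_{\hbar,\mu+\alpha}\otimes\DD^{ch}_\hbar(\open{T^*}\IG_\alpha)$ of Theorem \ref{thm:IHR embedding}. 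This is precisely the module structure defining ${\rm IHR}_\alpha(M_{\mu+\alpha})$, so the induced map is a map of $\WW^k_{\hbar,\mu}$-modules $M_\mu\to{\rm IHR}_\alpha(M_{\mu+\alpha})$.

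The main obstacle I expect is injectivity: an injection of $\KL_{-2h^\vee}$ objects need not remain injective after taking $\Hi{0}$ unless the lower cohomology of the cokernel is controlled. Writing the short exact sequence $0\to A\to B\to C\to 0$ with $A={\bf W}^k_{\hbar,G,\mu}\otimes M$, $B$ the target above, and $C\cong\coker(\Psi_{\mu,\alpha})\otimes M$, the long exact sequence in relative semi-infinite cohomology shows that $\Hi{0}(A)\to\Hi{0}(B)$ is injective once $\Hi{-1}(C)=0$. The key point is that $C$ is again a genuine $\KL_{-2h^\vee}$ object---its diagonal $\gf$-action integrates to $G(\OO)$, being a quotient of $B$---so the required vanishing follows from the exactness of $\Hi{0}(\widehat{\gf}_{-2h^\vee},\gf,-)$ on $\KL_{-2h^\vee}$, equivalently the concentration of this functor in cohomological degree zero for every such object, as established in \cite{Arakawa:2018egx} and underlying Proposition \ref{prop:BRST equivariant W alg}. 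Verifying carefully that this concentration applies to the cokernel $C$ (and not only to objects of the form ${\bf W}^k_{\hbar,G,\mu}\otimes M$) is the step I would treat most carefully.

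Finally, when $M$ is a vertex algebra object in $\KL_k$, every object and arrow above upgrades to the vertex algebra setting: ${\bf W}^k_{\hbar,G,\mu}\otimes M$ is a vertex algebra, $\Psi_{\mu,\alpha}\otimes\mathrm{id}_M$ is a homomorphism, the relative semi-infinite cohomology of a vertex algebra object is a vertex algebra, and the identifications of Proposition \ref{prop:BRST equivariant W alg} are vertex algebra isomorphisms. Hence the resulting embedding $M_\mu\hookrightarrow M_{\mu+\alpha}\,\widehat{\otimes}\,\DD^{ch}_\hbar(\open{T^*}\IG_\alpha)$ is one of vertex algebras, which completes the argument.
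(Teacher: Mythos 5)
Your proposal follows essentially the same route as the paper: tensor the strongly $G(\OO)$-equivariant embedding $\Psi_{\mu,\alpha}$ with $M$, pass to the relative semi-infinite cohomology complex, and identify source and target via Proposition \ref{prop:BRST equivariant W alg}. The injectivity issue you single out is genuine but is simply asserted in the paper's proof (``it induces an inclusion on cohomology''), so your long-exact-sequence argument via the vanishing of the degree $-1$ relative semi-infinite cohomology of the cokernel is a worthwhile extra detail rather than a different approach.
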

\begin{proof}
We start with the isomorphism of $\WW_{\hbar,\mu}$-modules 
\begin{equation}
M_\mu \cong \Hi{\bullet}(\widehat{\gf}_{-2h^\vee}, \gf, {\bf W}^k_{G,\mu}\otimes M)
\end{equation}
from Proposition \ref{prop:BRST equivariant W alg}. The IHR map $\Psi_{\mu,\alpha}$ provides an inclusion of complexes 
$$
C^{\tfrac{\infty}{2}+\bullet}(\widehat{\gf}_{-2h^\vee},\gf,{\bf W}^k_{G,\mu}\otimes M)\hookrightarrow C^{\tfrac{\infty}{2}+\bullet}(\widehat{\gf}_{-2h^\vee},\gf,{\bf W}^k_{G,\mu+\alpha}\otimes \DD^{ch}(\open{T^*}\IG_\alpha)\otimes M)~,
$$
since it is strongly $G(\OO)$-equivariant. Therefore, it induces an inclusion on cohomology and 
$$
{\rm H}^{\tfrac{\infty}{2}+\bullet}(\widehat{\gf}_{-2h^\vee},\gf,{\bf W}^k_{G,\mu}\otimes M)\hookrightarrow {\rm H}^{\tfrac{\infty}{2}+\bullet}(\widehat{\gf}_{-2h^\vee},\gf,{\bf W}^k_{G,\mu+\alpha}\otimes M)\otimes \DD^{ch}(\open{T^*}\IG_\alpha)~,
$$
and using Proposition \ref{prop:BRST equivariant W alg}, this gives the desired isomorphism.
\end{proof}

\appendix

\section{Microlocalization of chiral differential operators}\label{app:cdos}

Suppose $Y$ is a smooth, affine, $\ik$-scheme such that the second Chern class of $Y$ vanishes, \ie, $Y$ admits a sheaf of chiral differential operators. Importantly, this sheaf $\DD^{ch}_Y$ is only a sheaf over $Y$, but we wish to construct a microlocal version of $\DD^{ch}_Y$ that is a sheaf over $\JJ T^* Y$. In this appendix, we go over an abstract construction of this microlocal sheaf.


\subsection{Chiral envelope of a Lie* algebra}

Recall the definition of a Lie* algebra from Definition \ref{dfn:Lie star}. Given any $\IN$-graded vertex algebra, we can forget the regular terms in the OPE to obtain an $\IN$-graded Lie* algebra, giving rise to a forgetful functor from the category of vertex algebras to that of Lie* algebras. This forgetful functor admits a left-adjoint, the chiral (often called vertex) envelope.

Given an $\IN$-graded Lie* algebra $L$, note that $[\cdot,\cdot]:L\otimes L \rightarrow L$ given by 
\begin{equation}\label{eq:liealgstruct}
	[x,y] = \sum_{j=0}^{\infty} (-1)^j \partial^j (x_{(j)}y)~,
\end{equation}
defines an honest Lie bracket on $L$. Write $U(L)$ for the universal enveloping algebra of $L$ thought of as a Lie algebra under $[\cdot,\cdot]$. We extend the action of $\ik[\partial]$ to $U(L)$ by the Leibniz rule. We have a natural morphism of $\ik$-modules $i:L\hookrightarrow U(L)$, coming from the composition $L\hookrightarrow T(L) \xrightarrow{p} U(L)$, where $T(L)$ is the free tensor algebra on $L$ and $p$ is the natural projection to the universal enveloping algebra.

Given a Lie* algebra, the vector space ${\rm Sym}\, L$ has the natural structure of an $\IN$-graded vertex Poisson algebra, with $\lambda$-bracket
\begin{equation}
	[a_\lambda b] = \sum_{k\in\IN} \frac{\lambda^k}{k!}a_{(k)}b~,
\end{equation}
on $a,n\in L\subset {\rm Sym}\, L$ and extended to arbitrary elements by the left and right quasi-Leibniz rules. As usual, $U(L)$ admits a PBW basis of lexicographically ordered monomials and we have the usual symmetrization map
\begin{equation}
	{\rm Sym}\, L \xrightarrow{\sigma} U(L)~.
\end{equation}

\begin{prop}
	There exists a unique $\IN$-graded vertex algebra structure on  $U(L)$ such that for all $a\in L\subset U(L)$,
	and $v \in T(L)$,
	\begin{equation}\label{eq:normal ordering on envelope}
	p(a\otimes v) =i(a)_{(-1)}p(v)
	\end{equation}
  We write $V(L)$ for the $\ik$-module $U(L)$, when emphasizing its vertex algebra structure.
\end{prop}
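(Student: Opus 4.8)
The plan is to invoke the Reconstruction Theorem for vertex algebras (in the sense of Kac's book, or Frenkel--Ben-Zvi): to endow a $\ik$-module with a vertex algebra structure it suffices to specify a vacuum, a translation endomorphism, and a family of pairwise local fields whose modes generate the module by acting on the vacuum, and such data then determine the structure uniquely. I take the vacuum to be $\ket{0}=1\in U(L)$, the translation operator to be the given extension of $\partial$ to $U(L)$ by the Leibniz rule, and for each $a\in L$ I build a field $a(z)=\sum_{n\in\IZ}a_{(n)}z^{-n-1}\in\End(U(L))\Lz$; the normal-ordering condition will be built directly into the negative modes.

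First I define the modes. The condition $p(a\otimes v)=i(a)_{(-1)}p(v)$ forces $a_{(-1)}$ to be left multiplication by $i(a)$ on $U(L)$, and sesquilinearity then forces $a_{(-n)}$ to be $\tfrac{1}{(n-1)!}(\partial^{n-1}a)_{(-1)}$, i.e. left multiplication by $\tfrac{1}{(n-1)!}i(\partial^{n-1}a)$, for every $n\ge 1$; one checks immediately that these satisfy the translation-covariance relation $[\partial,a_{(-n)}]=n\,a_{(-n-1)}$ since $\partial$ is a derivation of $U(L)$. For the non-negative modes I proceed by induction on the length of words in $U(L)$: I set $a_{(n)}\ket{0}=0$ for $n\ge 0$, and propagate through a word using the Borcherds (weak associativity) identity with second index $-1$,
\[
a_{(n)}\big(i(b)_{(-1)}w\big)=i(b)_{(-1)}\big(a_{(n)}w\big)+\sum_{j=0}^{n}\binom{n}{j}(a_{(j)}b)_{(n-1-j)}w~.
\]
Every term on the right involves either a strictly shorter word or a non-negative mode of the Lie* product $a_{(j)}b\in L$ acting at lower depth, so this determines $a_{(n)}$ on all of $U(L)$; specializing to words of length one reproduces the given Lie* products $a_{(n)}b$.

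The main obstacle, and essentially the only substantive point, is to verify that the non-negative modes are \emph{well defined}, i.e. that the operators just built on the tensor algebra $T(L)$ descend to $U(L)=T(L)/(\text{relations})$; equivalently, that they annihilate the defining relations $x\otimes y-y\otimes x-[x,y]$. This amounts to establishing the commutator formula
\[
[a_{(m)},b_{(n)}]=\sum_{j\ge 0}\binom{m}{j}(a_{(j)}b)_{(m+n-j)}
\]
as operators on $U(L)$, which I would prove by induction on word length, feeding in the Lie* axioms of weak associativity and skew-symmetry at each inductive step, exactly as in the classical construction of the universal enveloping vertex algebra of a Lie conformal algebra. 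Granting this identity, pairwise locality $(z-w)^{N}[a(z),b(w)]=0$ for $N\gg 0$ follows at once from the locality axiom for $L$, since the Lie* products $a_{(j)}b$ vanish for $j$ large.

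With a family of pairwise local generating fields in hand, the Reconstruction Theorem equips $U(L)$ with a vertex algebra structure having the $a(z)$ as its generating fields, and the normal-ordering identity $p(a\otimes v)=i(a)_{(-1)}p(v)$ holds because $a_{(-1)}$ is by construction left multiplication by $i(a)$. The $\IN$-grading is inherited, since the Lie* operations and $\partial$ are homogeneous, so each $a_{(n)}$ is a graded operator and $\ket{0}$ sits in degree zero. Finally, for uniqueness: any vertex algebra structure on $U(L)$ for which $L\hookrightarrow U(L)$ is a morphism of Lie* algebras and which satisfies the normal-ordering condition must have exactly the negative modes forced above by sesquilinearity and exactly the non-negative modes forced by the Borcherds identity; hence its generating fields coincide with the $a(z)$, and the uniqueness clause of the Reconstruction Theorem shows the two structures agree.
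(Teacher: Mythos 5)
The paper gives no proof of this proposition at all: it is stated as the classical construction of the universal enveloping (chiral) vertex algebra of a Lie* algebra, so there is no argument of the authors' to compare yours against. Your route --- reconstruction from a family of mutually local generating fields, with the negative modes forced to be left multiplications by $\tfrac{1}{(n-1)!}\,i(\del^{n-1}a)$ and the non-negative modes built recursively from the Borcherds commutator identity at second index $-1$ --- is the standard hands-on proof, and the outline is sound: vacuum, translation covariance, the creation property, generation, locality, and the uniqueness argument are all handled correctly. You also rightly isolate the only substantive point, namely that the recursively defined operators on $T(L)$ descend to $U(L)$, equivalently the commutator formula $[a_{(m)},b_{(n)}]=\sum_{j}\binom{m}{j}(a_{(j)}b)_{(m+n-j)}$; but that step is asserted rather than carried out, and it is where essentially all of the work lives. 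The cleaner standard alternative, which avoids the well-definedness issue entirely, is to realize $U(L)$ as the vacuum module $\Ind_{\textup{Lie}(L)_{\geq 0}}^{\textup{Lie}(L)}\ik$ over the Lie algebra of formal modes $\textup{Lie}(L)=L[t,t^{-1}]/\textup{im}(\del\otimes 1+1\otimes \del_t)$: there the modes act tautologically as Lie algebra elements, locality is immediate from the locality axiom for $L$, and PBW supplies the $\ik$-module identification with $U(L)$.

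Two caveats before you run the induction. First, the descent step can only succeed if the Lie bracket used to define $U(L)$ is exactly the one satisfying $[x_{(-1)},y_{(-1)}]=[x,y]_{(-1)}$; the commutator formula at $m=n=-1$, together with $c_{(-2-j)}=\tfrac{1}{(j+1)!}(\del^{j+1}c)_{(-1)}$, forces $[x,y]=\sum_{j\geq 0}\tfrac{(-1)^j}{(j+1)!}\del^{j+1}(x_{(j)}y)$, which differs from \eqref{eq:liealgstruct} as printed; with the wrong bracket your operators would not preserve the defining ideal, so you should pin this formula down explicitly. Second, your uniqueness argument quietly adds the hypothesis that $L\into U(L)$ is a morphism of Lie* algebras, whereas the proposition as literally stated only imposes the normal-ordering condition and the $\IN$-grading; you should either derive the restriction property from those hypotheses or record it as part of the characterization you are proving.
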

We can think of $V(L)$, therefore, as a nonassociative deformation of $U(L)$---akin to how $U(L)$ is a noncommutative deformation of ${\rm Sym}\, L$. Moreover, we have a canonical isomorphism of vector spaces $\psi:U(L)\xrightarrow{\sim} V(L)$, defined recursively on the PBW basis of $U(L)$ by \eqref{eq:normal ordering on envelope}.

\begin{rem}\label{rem:bidiff operators}
The OPEs in $V(L)$ admit a rather nice description. The field $a(z)\equiv Y(a,z)$ for any $a\in L\subset V(L)$ is of the form
\begin{equation}
	Y(a,z) = \sum_{m\in\IZ} a_{(m)}z^{-m-1}~,
\end{equation}
This fixes the OPEs between all elements of $L\subset V(L)$. To compute OPEs of composites, we can use the left and right Wick rules. Now, since all $_{(n)}$ in $L$ are bidifferential operators, this guarantees that all $_{(n)}$-products in $V(L)$ are some finite order bidifferential operators---albeit highly complicated. For a nice exposition in the case where $L$ is a finitely generated $\ik[\del]$-module see \cite{Castellan:2023jnu}.
\end{rem}

Composing the non-associative deformation $\psi$ with the symmetrization map gives a map 
\begin{equation}
	{\rm NO}: {\rm Sym}\, L \xrightarrow{\sim} V(L)~.
\end{equation}
One should think of this as giving rise to a normal-ordering prescription.

One can repeat this construction starting with a Lie* algebroid instead.
\begin{dfn}
	Given a unital, commutative algebra $A\in\ik[\del]-$Mod, a Lie* $A$-algebroid is an $A$-module $L$ equipped with a Lie* bracket $\{_{(n)}, n\in \IN\}$ and an anchor map $\tau: L\rightarrow {\rm Der}(A)$, satisfying
	\begin{enumerate}[(i)]
		\item $\tau(a x)(b) = a\tau(x)(b)$ for any $a,b\in A$ and $x\in L$
		\item $x_{(n)}(a y) = \delta_{n,0}\tau(x)(a)y + a x_{(n)}y$ for any $a\in A$ and $x,y\in L$.
	\end{enumerate}
\end{dfn}
Given a Lie* $A$-algebroid $L$, the relative symmetric algebra ${\rm Sym}_A^\bullet\, L$ has a unique vertex Poisson algebra structure by extending the Lie* bracket using the left and right Leibniz rules. We define $U(L)$ as the quotient of $T(L) = \bigoplus_n L^{\tensor{A} n}$, by the ideal defined by \eqref{eq:liealgstruct}.

\subsection{Chiral differential operators as chiral envelopes}


Let $Y$ be a smooth, affine variety over $\IC$ with trivial second Chern class. Let $c\in H^3_{\rm dR}(Y)$, such a choice fixes (up to isomorphism) a particular cdo over $Y$. For brevity, we suppress this choice in future notation.

Let $L_Y$ denote the following extension 
\begin{equation}
	0\rightarrow \OO(\JJ Y)\hookrightarrow L_Y \twoheadrightarrow \Theta(\JJ Y) \rightarrow 0
\end{equation}
of the global vector fields on $\JJ Y$. A choice of $c$ endows $L_Y$ with the structure of a Lie* $\OO(\JJ Y)$-algebroid, with OPEs given by
\begin{equation}
\begin{split}
 x(z) y(w)    &\sim 	\frac{[x,y](w) - c(x,y)(w)  }{z-w} ~,\\
 x(z) a(w)  &\sim \frac{x(a)(w)}{z-w}
\end{split}
\end{equation}
for $x,y\in \Theta(Y)\subset \Theta(\JJ Y)$ and $a\in \OO(Y)\subset \OO(\JJ Y)$. Moreover, $L_Y$ is $\IN$-graded with $L_{Y}(0)= \OO(Y)$ and $L_{Y}(1)= \Omega^1(Y) \oplus \Theta(Y)$, where $\Omega^1(Y) \cong \OO(Y)\cdot \del \OO(Y)$.

The various $_{(n)}$ products of the Lie* algebra are, therefore, bidifferential operators on the $\Theta(Y)$ and $\OO(Y)$ variables. One should interpret Borcherd's identities as relations satisfied by these bidifferential operators.

By definition, a cdo, $\DD^{ch}(Y)$ is the chiral envelope of $L_Y$. In particular, this means that the OPEs in $\DD^{ch}(Y)$ may be computed using the left and right Wick rules, with each Wick contraction a bidifferential operator.

Let us also define a $\hbar$-adic version of the preceding construction. Let $L_{\hbar,Y}$ denote the $\hbar$-adic version of $L_Y$ with OPEs
\begin{equation}
\begin{split}
 x(z) y(w)    &\sim 	\frac{\hbar [x,y](w) - \hbar^2 c(x,y)(w) }{z-w}  ~,\\
 x(z) f(w)  &\sim \frac{\hbar x(f)(w)}{z-w}
\end{split}
\end{equation}
for $f\in \OO(Y)$ and $x\in \Theta(Y)$. The group $\IG_\hbar$ acts here by Lie* algebra automorphisms, scaling $x\in \Theta(Y)$ and leaving $\OO(\JJ Y)$ invariant. Denote the chiral envelope of $L_{\hbar,Y}$ by $\DD_{\hbar}^{ch}(Y)$---the $\hbar$-adic cdos. Analogously, we have a $\hbar$-adic normal ordering prescription 
\begin{equation}
	{\rm NO}_\hbar: {\rm Sym}\, L_{\hbar,Y} \xrightarrow{\sim} \DD_\hbar^{ch}(Y)~,
\end{equation}
which is now a $\IG_\hbar$-equivariant isomorphism of $\ik\Ph$-modules. Hereafter, ${\rm Sym}$ should be understood as the relative symmetric algebra with respect to $\OO(\JJ Y)$.

Write $L_Y\Ph$ for the $\hbar$-adic completion of $L_Y\otimes\IC \Ph$, we have a natural embedding of $\hbar$-adic Lie* algebras, $\sigma_\hbar: L_{\hbar,Y} \hookrightarrow L_{Y} \Ph$, given by
\begin{equation}
	\sigma_\hbar(f) = f \text{ and } \sigma_{\hbar}(x) = \hbar x~,
\end{equation}
for $f\in \OO(\JJ Y)$ and $x\in \Theta(Y)$, and extended by $\ik[\del]$-linearity. This embedding is $\IG_\hbar$-equivariant, where on the right $\IG_\hbar$ acts by scaling only $\hbar$. This induces an $\IG_\hbar$-equivariant embedding of $\hbar$-adic vertex algebras $\DD_\hbar^{ch}(Y)\hookrightarrow \DD^{ch}(Y)\Ph$, again denoted by $\sigma_\hbar$. Note that $\sigma_\hbar$ becomes an isomorphism after inverting $\hbar$.

\subsection{Microlocalizing cdos}

The $\ik\Ph$-module isomorphism ${\rm NO}_\hbar$ can be leveraged to endow ${\rm Sym}\, L_\hbar(Y)$ with the structure of a $\hbar$-adic vertex algebra. The $_{(n)}$ products are defined for $a,b\in {\rm Sym}\, L_\hbar(Y)$ as
\begin{equation}
	a_{(n)} b \coloneqq {\rm NO}_\hbar^{-1}({\rm NO}_\hbar(a)_{(n)}{\rm NO}_\hbar(b))~,
\end{equation}
analogously to the usual star products for quantizations of a Poisson algebra. Moreover, from Remark \ref{rem:bidiff operators} and the left right Wick rules, we note that 
\begin{equation}
	a_{(n)}b = \sum_{k=1}^\infty \hbar^k P^k_{(n)}(a,b)
\end{equation}
where $P^k_{(n)}$ are finite order bidifferential operators. 

\begin{lem}\label{lem:NOisiso}
	With the given $_{(n)}$-products, ${\rm NO}_\hbar$ lifts to an isomorphism of $\hbar$-adic vertex algebras.
\end{lem}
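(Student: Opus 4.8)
The plan is to prove the lemma by transport of structure along the module isomorphism ${\rm NO}_\hbar$. By construction the products on ${\rm Sym}\, L_{\hbar,Y}$ are the conjugates $a_{(n)}b = {\rm NO}_\hbar^{-1}({\rm NO}_\hbar(a)_{(n)}{\rm NO}_\hbar(b))$ of the products on $\DD_\hbar^{ch}(Y)=V(L_{\hbar,Y})$, so ${\rm NO}_\hbar$ intertwines the $_{(n)}$-products tautologically. Thus the entire content of the lemma is to check that ${\rm Sym}\, L_{\hbar,Y}$, equipped with these products together with its \emph{natural} unit $1$ and \emph{natural} translation operator $\del$ (the one extending the $\ik[\del]$-action on $L_{\hbar,Y}$ by derivations), forms an $\hbar$-adic vertex algebra, and that under ${\rm NO}_\hbar$ these natural choices correspond to the vacuum $\ket{0}$ and translation operator of $\DD_\hbar^{ch}(Y)$. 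Once this is established, the isomorphism statement is immediate.

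First I would verify the two compatibilities with the distinguished data. Since ${\rm NO}_\hbar = \psi\circ\sigma$ factors through the symmetrization map $\sigma$, which sends $1\mapsto 1$, and the canonical isomorphism $\psi$ sends $1\mapsto\ket{0}$ by the recursive normalization \eqref{eq:normal ordering on envelope}, the transported vacuum is precisely the unit $1\in{\rm Sym}\, L_{\hbar,Y}$. For the translation operator, I would argue by induction on symmetric degree using \eqref{eq:normal ordering on envelope}: on generators, $\del$ acts as $\del\, i(a)= i(\del a)$ in $V(L_{\hbar,Y})$, and $\del$ is a derivation of the $_{(-1)}$-product in any ($\hbar$-adic) vertex algebra, which matches the Leibniz extension of $\del$ on the symmetric algebra; hence ${\rm NO}_\hbar\circ\del = \del\circ{\rm NO}_\hbar$. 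This identifies the transported $\del$ with the natural one.

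With the vacuum and translation operator matched, the remaining axioms follow formally: each of the vacuum, locality, skewsymmetry, weak associativity, and sesquilinearity axioms is a universally quantified identity in the operations $_{(n)}$, $\del$ and $\ket{0}$, so applying the $\ik\Ph$-linear bijection ${\rm NO}_\hbar^{-1}$ to the corresponding identity valid in $\DD_\hbar^{ch}(Y)$ produces its counterpart in ${\rm Sym}\, L_{\hbar,Y}$. For the $\hbar$-adic structure, ${\rm Sym}\, L_{\hbar,Y}$ inherits flatness and $\hbar$-adic completeness over $\ik\Ph$ through the module isomorphism, the products are $\ik\Ph$-bilinear and $\hbar$-adically continuous because $a_{(n)}b = \sum_k \hbar^k P^k_{(n)}(a,b)$ with each $P^k_{(n)}$ a finite-order bidifferential operator, and ${\rm NO}_\hbar$ descends to module isomorphisms on the truncations modulo $\hbar^m$, transporting the vertex algebra structure of $\DD_\hbar^{ch}(Y)/\hbar^m$ to each $({\rm Sym}\, L_{\hbar,Y})/\hbar^m$. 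I expect the only genuinely non-formal step to be the $\del$-compatibility induction; every other part is bookkeeping for transport of structure, and this is exactly what makes the bidifferential description of the $_{(n)}$-products---hence the subsequent microlocalization over $\JJ T^*Y$---available on the symmetric-algebra side.
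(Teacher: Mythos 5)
Your proposal is correct and matches the paper's (implicit) argument: the paper states this lemma without proof precisely because the $_{(n)}$-products on ${\rm Sym}\, L_{\hbar,Y}$ are \emph{defined} by conjugation along ${\rm NO}_\hbar$, so the content is pure transport of structure. Your additional checks---that the unit corresponds to the vacuum, that $\del$ is intertwined, and that flatness, completeness, and the truncated vertex algebra structures carry over---are exactly the bookkeeping the paper leaves to the reader.
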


To construct the microlocalization of $\DD^{ch}_\hbar(Y)$ we can now follow the general logic of \cite{Arakawa2015:local}. Note that ${\rm Sym}\, L_{\hbar,Y}$ is naturally a module over $\OO(\JJ T^* Y)$ and thus defines a quasicoherent sheaf of $\ik \Ph$-modules over $\JJ T^* Y$---call this sheaf  $\DD^{ch}_{\hbar, Y}$.

Let $f\in \OO(\JJ T^* Y)$ be some function and let $U_f$ denote the basic Zariski open defined by $f\neq 0$. We want to endow $\DD^{ch}_{\hbar,Y}(U_f) = ({\rm Sym}\, L_{\hbar,Y})[f^{-1}]$ with the structure of an $\hbar$-adic vertex algebra. Since the $_{(n)}$-products of ${\rm Sym}\, L_{\hbar,Y}$ are bidifferential operators, they naturally extend to the localisation $({\rm Sym}\, L_{\hbar,Y})[f^{-1}]$, essentially via the quotient rule. Moreover, Borcherd's rules continue to be satisfied since the bidifferential operators $P^{k}_{(n)}(\cdot,\cdot)$ satisfy them. Furthermore, since the bidifferential operators are independent of the localization, the vertex algebra structures glue appropriately and $\DD^{ch}_{\hbar,Y}$ is a sheaf of $\hbar$-adic vertex algebras.

It follows, from Lemma \ref{lem:NOisiso} that 
\begin{equation}
	\Gamma(\JJ T^*Y,\DD^{ch}_{\hbar,Y}) \cong \DD^{ch}_{\hbar}(Y) ~,
\end{equation}
as $\hbar$-adic vertex algebras. The global sections are also, therefore, equipped with an $\IG_\hbar$-equivariant embedding $\sigma_\hbar: \Gamma(\JJ T^*Y,\DD_{\hbar,Y}^{ch}) \hookrightarrow \DD^{ch}(Y)\Ph$ of $\hbar$-adic vertex algebras. This then implies the following lemma.

\begin{lem}\label{lem:Rees for h-adic cdos}
We have the following isomorphism of vertex algebras over $\ik$,
	$$\big(\Gamma(\JJ T^*Y, \DD_{\hbar,Y}^{ch})\tensor{\ik\Ph}\ik(\!(\hbar)\!)\big)^{\IG_\hbar}\cong \DD^{ch}(Y)= \Gamma(Y,\DD^{ch}_Y)$$
\end{lem}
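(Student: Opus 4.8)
The plan is to read off the statement from the $\IG_\hbar$-equivariant embedding $\sigma_\hbar:\Gamma(\JJ T^*Y,\DD^{ch}_{\hbar,Y})\hookrightarrow \DD^{ch}(Y)\Ph$ of $\hbar$-adic vertex algebras constructed above, which becomes an isomorphism after inverting $\hbar$. Tensoring $\sigma_\hbar$ with $\ik(\!(\hbar)\!)$ over $\ik\Ph$ therefore produces an $\IG_\hbar$-equivariant isomorphism of $\ik(\!(\hbar)\!)$-vertex algebras
\begin{equation}
	\Gamma(\JJ T^*Y,\DD^{ch}_{\hbar,Y})\tensor{\ik\Ph}\ik(\!(\hbar)\!)\xrightarrow{\ \sim\ }\DD^{ch}(Y)(\!(\hbar)\!)~,
\end{equation}
where the right-hand side is the localization $\DD^{ch}(Y)\Ph\tensor{\ik\Ph}\ik(\!(\hbar)\!)$, an honest Laurent-series object by flatness over $\ik\Ph$. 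By construction the induced $\IG_\hbar$-action on $\DD^{ch}(Y)(\!(\hbar)\!)$ scales $\hbar$ with weight one while fixing $\DD^{ch}(Y)$ pointwise.

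It then remains to compute $\IG_\hbar$-invariants on the right. A general element is a Laurent series $\sum_{n} a_n\hbar^n$ with $a_n\in \DD^{ch}(Y)$; since $a_n$ has weight zero and $\hbar^n$ has weight $n$, invariance forces $a_n=0$ for $n\neq 0$, so the invariant subspace is exactly $\DD^{ch}(Y)$ sitting in $\hbar$-degree zero. As the $_{(n)}$-products on $\DD^{ch}(Y)(\!(\hbar)\!)$ are the $\ik(\!(\hbar)\!)$-linear extensions of those on $\DD^{ch}(Y)$, they preserve $\IG_\hbar$-weight and restrict on this weight-zero part to the original cdo products; hence the invariants form a sub-vertex-algebra canonically isomorphic to $\DD^{ch}(Y)=\Gamma(Y,\DD^{ch}_Y)$, which is the desired statement.

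This argument is the exact analogue, for cdos, of the asymptotic-algebra (Rees) passage used above to descend from $\WW^k_{\hbar,\mu}$ to $\WW^k_\mu$. The only point requiring attention---more a verification than a genuine obstacle---is that taking $\IG_\hbar$-invariants is compatible with the vertex algebra structure, i.e.\ that the weight-zero coefficients close under every $_{(n)}$-product. This is automatic because $\IG_\hbar$ acts by $\hbar$-adic vertex algebra automorphisms on $\DD^{ch}(Y)\Ph$, and hence on its localization, so that each weight space is preserved by all the products; the flatness invoked above guarantees that inverting $\hbar$ introduces no torsion and that the weight decomposition behaves as expected.
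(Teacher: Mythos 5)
Your argument is correct and is precisely the route the paper intends: the lemma is stated as an immediate consequence of the $\IG_\hbar$-equivariant embedding $\sigma_\hbar:\Gamma(\JJ T^*Y,\DD^{ch}_{\hbar,Y})\hookrightarrow \DD^{ch}(Y)\Ph$ becoming an isomorphism after inverting $\hbar$, followed by taking $\IG_\hbar$-invariants, exactly as you do. Your additional verification that the weight-zero part closes under the $_{(n)}$-products is a detail the paper leaves implicit but is worth having spelled out.
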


\subsection{Chiral differential operators on a group}

Let $G$ be an algebraic group over $\IC$, then $T^*G$ is trivial and $G$ admits a sheaf of cdos. The construction of cdos over $G$ and their properties has been detailed in \cite{Gorbounov:2000cdosG,Arkhipov:2002cdosG}, here we present a brief review of their most salient properties.

The isomorphism classes of cdos on $G$ are in correspondence with a choice of a Chern--Simons cocycle in $H^3_{\rm dR}(G)$. Take $G$ to be $\GL_n$ or a simple Lie group, then there is a one parameter family of cdos, labeled by $k\in \IC$. For a fixed $k$, we denote the global sections $\Gamma(G,\DD^{ch}_k)$ by $\DD^{ch}_k(G)$. As a $\IN$-graded vertex algebra, $\DD^{ch}_k(G)$ is generated in degree $0$ by $\OO(G)$ and in degree $1$ by $\gf$, with singular OPEs
\begin{equation}
	\begin{split}
	x(z)a(w) &\sim \frac{x(a)(w)}{z-w}~,\\
	x(z)y(w) &\sim \frac{[x,y](w)}{z-w} + \frac{k(x,y)}{(z-w)^2}~,
	\end{split}
\end{equation}
for $x,y\in\gf$ and $a\in \OO(G)$.

From the trivialization of $TG$ as left invariant vector fields, we have an embedding of vertex algebras,
\begin{equation}
\Phi_L: V^k(\gf) \rightarrow \DD^{ch}_k(G)~.
\end{equation}
The trivialization of $TG$ as right invariant vector fields gives rise to a complementary embedding of vertex algebras
\begin{equation}
\Phi_R: V^{k-2h^\vee}(\gf) \rightarrow \DD^ch_k(G)~,
\end{equation}
but at the dual level. These are two mutually commuting current subalgebras and, moreover,
\begin{equation}
	{\rm Comm}_{\DD^{ch}_k(G)} \im\, \Phi_L = \im \, \Phi_R~, \text{ and } {\rm Comm}_{\DD^{ch}_k(G)} \im \, \Phi_R = \im \, \Phi_L~.
\end{equation}

From the construction of the previous section, we know that we can construct a microlocal version of $D^{ch}_{G,k}$ for any $k\in \IC$, call it $\DD^{ch}_{\hbar,G,k}$. The global sections $\Gamma(\JJ T^*G, \DD^{ch}_{\hbar,G,k}) \equiv \DD^{ch}_{\hbar,k}(G)$ are generated (over $\IC\Ph$) by $\OO(G)$ and $\gf$, with singular OPEs,

\begin{equation}
	\begin{split}
	x(z)a(w) &\sim \frac{\hbar x(a)(w)}{z-w}~,\\
	x(z)y(w) &\sim \frac{\hbar [x,y](w)}{z-w} + \frac{\hbar^2 k(x,y)}{(z-w)^2}~,
	\end{split}
\end{equation}
for $x,y\in\gf$ and $a\in \OO(G)$.

\bibliographystyle{amsalpha}
\bibliography{refs}

\end{document}